\documentclass{article}
\usepackage[pdftitle={Paper}, pdfauthor={Alejandro Cabrera}]{hyperref}
\usepackage{tabularx}
\usepackage{amsmath}
\usepackage{amsfonts}
\usepackage{amssymb,latexsym}
\usepackage{txfonts}
\usepackage[all]{xy}
\usepackage{graphicx}			
\usepackage{xcolor}
\usepackage{tikz-cd}

\newtheorem{lemma} {Lemma} [section]
\newtheorem{proposition} [lemma] {Proposition}

\newtheorem{theorem} [lemma] {Theorem}

\newtheorem{corollary} [lemma] {Corollary}
\newtheorem{definition}[lemma] {Definition}

\newtheorem{example}[lemma] {Example}
\newtheorem{remark}[lemma]{Remark}
\newtheorem{remarks}[lemma]{Remarks}
\newenvironment{proof}{{\sc Proof:}}{ {\hspace*{\fill} $\square$\\} }

\newcommand{\dto}{\dashrightarrow}

\def\R{\mathbb{R}}

\def\h{\hbar}
\def\e{\epsilon}

\def\G{\mathcal{G}}

\def\e{\epsilon}

\def\g{\gamma}

\def\tx{\tilde{x}}

\def\LL{\mathbb{L}}

\def\C{\mathbb{C}}
\def\dE{\delta_l}
\def\uC{\underline{\C}}
\def\gr{\mathrm{gr}}

\def\rra{\rightrightarrows}
\newcommand{\comp}[1]{{(#1)}}

\newcommand{\ol}[1]{\overline{#1}}

\def\R{\mathbb{R}}
\def\g{\mathfrak{g}}
\def\h{\hbar}
\def\dto{\dashrightarrow}

\def\compo{^{(2)}}
\def\rra{\rightrightarrows}
\def\H{\mathcal{H}}

\def\G{\mathcal{G}}
\def\fih{\frac{i}{\h}}
\def\e{\epsilon}
\def\tx{\tilde{x}}
\def\tmu{\tilde{\mu}}
\def\gS{\gamma_S}
\def\factor{\kappa}
\long\def\comentar#1{}
\def\F{\mathfrak{F}_\h}

\def\id{\mathrm{Id}}

\def\hl{h^L}
\def\hr{h^R}
\def\Sl{\Sigma^L}
\def\Sr{\Sigma^{R}}
\def\phih{\phi^h}

\def\K1l{K_{\text{1-loop}}}

\newcommand{\titpar}[1]{\subsubsection{#1}}
\newcommand{\stitpar}[1]{\medskip {\bf #1}}

\begin{document}

\title{Associative half-densities on symplectic groupoids and quantization\thanks{Work primarily conducted at Instituto de Matemática, Universidade Federal do Rio de Janeiro - UFRJ, Brasil.}}

\author{Alejandro Cabrera\footnote{Department of Mathematics (MAT), Universitat Polit\`ecnica de Catalunya - BarcelonaTech (UPC); \\ ORCID ID: 0000-0003-3279-0062; \emph{emails:} alejandro@matematica.ufrj.br, alejandro.cabrera1@upc.edu }
\and Gabriel Ledesma\footnote{Instituto de Computação, Universidade Federal Fluminense - UFF, Brasil;\\ ORCID ID: 0009-0009-4687-7952; \emph{email:}gabrielgonzalo@id.uff.br}}
\date{%
    \vskip.5cm \today
}

\maketitle

\begin{abstract}
In this paper, we study half-densities enhancing the multiplication map on a symplectic groupoid and which satisfy a suitable associativity condition. This is structurally motivated by the expected complete semiclassical-analytic approximation to a star product for the underlying Poisson manifold. We show the existence and classification of such associative half-densities, and further apply this theory to the understanding of semiclassical factors in Kontsevich's quantization formula. In the particular case of a linear Poisson structure, we recover the factors appearing in the Duflo isomorphism and its Kashiwara-Vergne extensions as a canonical associative enhancement.
\end{abstract}

\setcounter{tocdepth}{2}
\tableofcontents

\section{Introduction}
This paper can be framed in the context of the study of analytic, non-formal and geometric aspects of quantization of Poisson structures by star products. We study \emph{associative half-densities} enhancing symplectic groupoid multiplication and their role in the quantization of the underlying Poisson manifold (see \cite{BFFLS,Kont} and \cite{CFer1} for non-formal aspects).
We begin this introduction explaining the general context (see \cite{BatWei, GSbook, KarMas}) and, then, move towards the specific topics and contributions. 


\stitpar{Quantization of Poisson manifolds.} The problem of quantization of Poisson brackets is a very important and well-known one, having multiple formulations and branches.
Consider $(M,\pi\equiv \{,\})$ a Poisson manifold. The quantization that we shall focus on is motivated by symbol calculus and based on a family of operations $\h\mapsto \star_\h$, called \emph{star product} among \emph{classical symbols} in $C^\infty(M)$. It can be axiomatized as follows: (see \cite{BFFLS,KarMas} for variants of these axioms) 
\begin{itemize}
\item[S1)] $f_1 \star_\h f_2 = f_1f_2 + O(\h)$
\item[S2)] $\frac{i}{\h}(f_1 \star_\h f_2-f_2 \star_\h f_1) = \{f_1,f_2\} + O(\h)$
\item[S3)] $(f_1 \star_\h f_2)\star_\h f_3 = f_1 \star_\h (f_2 \star_\h f_3) + O(\h^\infty)$.
\end{itemize}
Here $\h$ is Planck's constant seen as a scale parameter and $\h\to 0$ is the \emph{classical limit} in which the quantum scales tend to zero.
%
%
Given a Poisson manifold $(M,\pi)$, its \emph{quantization problem} is to study the existence, classification and representations of such a star product $\star_\h$ with the above properties. For $M=T^*X$ with canonical brackets, this problem is solved by the $\star_\h$ coming from symbol calculus for pseudo-differential operators on $X$ (see \cite{GSbook,Zw}). For arbitrary Poisson manifolds, the mere existence problem is notably hard.

When working purely with asymptotic expansions as $\h\to 0$, i.e. modulo $O(\h^\infty)$, the above three axioms determine a so-called \emph{formal star product} (formal deformation quantization, \cite{BFFLS}) and their existence and classification was understood through the well-known and deep work of Kontsevich \cite{Kont}. The most basic element in Kontsevich's work is an explicit formula for a star product $\star_\h^K$ in the case of $M=\R^n$ a coordinate domain. For later reference, the structure of this formula can be described in a factored form (see \cite[\S 7]{CatDheFel}):
\begin{equation}\label{eq:kontstarfactors}
(e^{\fih \xi_1}\star^K_\h e^{\fih \xi_2})(x) = (a^K_0(\xi_1,\xi_2,x) + \h a^K_1(\xi_1,\xi_2,x) + \h^2 a^K_2(\xi_1,\xi_2,x) + \dots) e^{\fih S_K(\xi_1,\xi_2,x)} 
  \end{equation}
where $\xi_1,\xi_2:M\to \R$ are linear functions, $x\in M$, and $S_K, a^K_j=e^{K_{(j+1)-loop}} \in C^\infty(M)[[\xi_1,\xi_2]]$ are formal expansions in the Fourier dual variables $\xi_1,\xi_2$. Here, $S_K, K_{\text{$l$-loop}}$  are given by formal sums over Kontsevich graphs with $0$-loops and $l$-loops, respectively. The theory of this paper aims towards understanding $a_0^K=e^{\K1l}$ and its relation to $S_K$.


\stitpar{Symplectic groupoids as part of a semiclassical approximation.} We now outline the role of symplectic groupoids in the quantization problem (\cite{Karasev,We91,Zak}) following accounts in \cite{BatWei,We87} and references therein, which can be seen as providing a general theory for the factor $S_K$ above. 
To that end, let us recall a refined quantum-classical correspondence for the symplectic case in which state spaces are included as in the following table.
\begin{center}
\begin{tabular}{ |p{4cm}|p{4cm}|p{4cm}| }
\hline
\multicolumn{3}{|c|}{semiclassical approximation correspondence} \\
 \hline
\emph{Quantum} & \emph{Enhanced Symplectic} & \emph{Symplectic}  \\ 
 \hline
$\mathcal{H}$: $\mathbb{C}$-vector (Hilbert) space & $(S,\omega)$: symplectic &  $(S,\omega)$: symplectic  \\ 
 \hline
 $\psi_\h \in \mathcal{H}$ element or "WKB-state" & $L\hookrightarrow (S,\omega)$ Lagrangian submanifold \emph{with $\sigma$ a half-density along $L$} & $L\hookrightarrow (S,\omega)$ Lagrangian submanifold \\  
 \hline
\end{tabular}
\end{center}
We shall first use the first and third columns. Going back to quantization of Poisson manifolds, applying the above correspondence to a general family of algebra structures on some $\H$,
\[ \star_\h : \H \otimes \H \to \H, \ \iff \psi_\h \in \H^* \otimes \H^* \otimes \H \overset{\text{SC lim}}{\to} [L \hookrightarrow (S,-\omega) \times (S,-\omega) \times (S,\omega)],\]
one proposes that such $(S,\omega)$ must be the space of morphisms of a \emph{symplectic groupoid} $(G\rightrightarrows M, \omega)$ where $L=\gr(m)$ is the graph of its partially defined multiplication operation. This operation must be associative as a semiclassical counterpart to axiom (S3). The appearance of $M$ as objects of $G$ can be motivated similarly by considering that $\star_\h$ has approximate units (from axiom (S1)). Alternative reasonings leading to the same notion can be found in \cite{KarMas}.

Such insights motivated the study of symplectic groupoids, leading to a deep understanding of their structure and to uncover an underlying Lie theory for Poisson brackets (see \cite{CrFeMa} and references therein). For any symplectic groupoid  $(G\rightrightarrows M, \omega)$, the manifold of objects $M$ inherits a unique Poisson structure $\{,\}$ and one says that $(G,\omega)$ \emph{integrates} $(M,\{,\})$, just as Lie groups integrate Lie algebras. Moreover, after Kontsevich's construction of a formal star product $\star_\h^K$ for any Poisson manifold in \cite{Kont}, it was understood that the \emph{$0$-loop factor} $S_K$ in \eqref{eq:kontstarfactors} is indeed completely determined by an underlying (local) symplectic groupoid structure, see \cite{CDh,C22} (also \cite{CatDheFel,Karabegov} for direct formal-family treatments). Alternative constructions relating symplectic groupoids and noncommutative algebras can be also found in \cite{Hawk,Rieff2,GuWi} (see also below). 


\stitpar{The full data of the semiclassical approximation: semiclassical analysis and half-densities.}
We now go back to the semiclassical approximation correspondence and focus on the second \emph{enhanced symplectic} column. This column is motivated by certain important facts in \emph{semiclassical analysis} (see \cite{GSbook,Mein,Zw} and a similar table in \cite[Sec. 2]{BatWei}) and the key new ingredient is that the Lagrangian submanifolds come enhanced with a half-density defined on them. These half-densities are very important factors in the cases where $\H=L^2(X)$ and in which the theory is driven towards understanding asymptotic behaviour of solutions of PDEs such as the Schrodinger equation with $\h$ as a scale parameter (see \cite{Zw} when $X=\R^n$). 

When applied to star products, this correspondence inspires a concrete \emph{enhancement} of the notion of symplectic groupoid which is the main object of study in this paper. The data of an \emph{enhanced symplectic groupoid} $(G\rightrightarrows M,\omega,\sigma)$ is then a symplectic groupoid together with a half-density $\sigma$ defined along the graph $\gr(m)$ of its multiplication map. This   structure is designed to encode the complete semiclassical data behind a star product $\star_\h$ and thus goes deeper into the structural understanding of quantization of Poisson manifolds. In terms of Kontsevich's $\star_\h^K$, the novel structure $\sigma$ points towards the understanding of the \emph{$1$-loop factor} $a_0^K$ in \eqref{eq:kontstarfactors}.

Half-densities also appear in the construction \emph{convolution $C^\ast$-algebras} $C(H)$, now suitably defined along the total space of a Lie groupoid $H$ (see \cite{Lesc,Sta1,Sta2}). These are related to star products for $M=A_H^*$, with $A_H$ the Lie algebroid of $H$, once a quantization map $Q_\h:C^\infty(A_H^*) \to C(H)$ is constructed so that $Q_\h(f_1)\circ Q_\h(f_2) = Q_\h(f_1\star_\h f_2)$. This map typically involves Fourier transform and auxiliary data, leading to an enhancement of the cotangent groupoid $G=T^*H$ (see also \cite[Appendix]{Sta1}). For example, the construction of such $Q_\h$ is detailed in \cite{Rieff2} for $H$ a Lie group, and in \cite{KarOsb} for $A_H^*$ a magnetic cotangent bundle. The first case can be seen to yield a family of star products on $M=\mathfrak{h}^*$ which will be studied in detail in Section \ref{subsec:linear} below. (See also Example \ref{ex:symplectic} below for a relation to the second case and \cite{CFer2} for general $H$.)


\stitpar{Associativity and the enhanced symplectic category.}
To complete the definition of an enhanced symplectic groupoid $(G\rightrightarrows M,\omega,\sigma)$, we finally discuss the associativity condition on $\sigma$ which corresponds to the semiclassical approximation of the associativity axiom (S3) for a star product. This is expressed using a composition operation for enhanced canonical relations $(L,\sigma):(S_1,\omega_1) \dto (S_2,\omega_2)$ which corresponds to the composition of (semiclassical) Fourier Integral operators $F_\h$ under Stationary Phase approximations, see \cite{GSbook,Mein}.
%
%
Lagrangians compose as relations $L_1 \circ L_2$, which requires a certain transversality or cleanness hypothesis for the result to be Lagrangian again and, in that case, the corresponding half-densities can also be composed yielding $\sigma_1\circ \sigma_2$ a half-density on $L_1\circ L_2$.

Coming back to our $(G\rightrightarrows M,\omega,\sigma)$, the relevant \emph{associativity condition} then reads (see Definition \ref{def:main} below)
\[ (\gr(m),\sigma) \circ ((\gr(m),\sigma) \times \id) = (\gr(m),\sigma) \circ ( \id \times (\gr(m),\sigma)). \]
We observe that this is a non-linear equation for $\sigma$ which admits non-trivial solutions even for simple groupoids (see Example \ref{ex:pi=0}). Solutions to this equation will be called \emph{associative half-densities} (or \emph{associative enhancements}) of the underlying symplectic groupoid and are our main objects of study.


\stitpar{Main results.} 
In this paper, we thus develop the theory of such associative half-densities $\sigma$ enhancing symplectic groupoids and apply it to study the semiclassical factors $S_K$ and $a_0^K$ behind Kontsevich's star product formula \eqref{eq:kontstarfactors}. Concretely, besides the general motivations given above, our main results are the following. 

\medskip
\noindent\emph{First main result:} (Existence and classification, Theorem \ref{thm:main}) Every symplectic groupoid $(G\rra M, \omega)$ admits an associative enhancement $\sigma$. Moreover, the choice of a non-vanishing half-density $\mu\neq 0$ on $M$ induces both a canonical associative enhancement $\sigma^c$ of $G$ and an identification of the set of non-vanishing associative enhancements $\sigma\neq 0$, modulo a natural notion of equivalence (Definition \ref{def:equivenhanc}), with the second multiplicative cohomology group $H^2(G,\C^*)$ of $G$. 
\medskip

We also show that a $\mu$ as above can be recovered from any $\sigma\neq 0$, that it can be seen as an enhancement of the identity $1:M\hookrightarrow G$, and that it satisfies an enhanced identity axiom (Proposition \ref{prop:idaxiom}).
Next, moving towards the study of $\star^K_\h$ in \eqref{eq:kontstarfactors}, we recall from \cite{C22,CatDheFel} that a coordinate Poisson manifold can be set into a family $(M=\R^n,\epsilon \pi)$, with formal parameter $\e$, and integrated by a formal family of symplectic groupoids $(G_K\rra M,\omega_c)$ defined by $S_K$. On the one hand, the first result above allows us to construct a formal family of canonical associative enhancements $\sigma^c$ for this $G_K$ out of the coordinate half-density $\mu=|dx|^{1/2}$ on $M$. We see this $\sigma^c$ as being defined by the underlying coordinate Poisson geometry through our general theory. On the other hand, the 1-loop factor $a_0^K$ in Kontsevich's quantization formula defines another formal family of enhancements $\sigma^K$ of $G_K$ (see eq. \eqref{eq:Jsigma}).

\medskip
\noindent\emph{Second main result:} (characterization of Kontsevich 1-loop factor, Theorem \ref{thm:Kenhancement}) $\sigma^K$ and $\sigma^c$ are equivalent formal families of enhancements. 
\medskip

Altogether, these provide a structural explanation of the semiclassical associativity properties of the 1-loop factor $a_0^K$ in Kontsevich formula. We also prove a stronger version $\sigma^K=\sigma^c$ of the second result in the case of linear Poisson manifolds $M=\g^*$ (Proposition \ref{prop:gastsigmaK}). In this case, we recall that this factor $a^K_0$ plays an important role in leading to fine properties of the center of $\star^K$ and is related to the Duflo isomorphism (see \cite[\S 8.3]{Kont} and \cite{Amar}) and its extension to certain convolution algebras, as posed by Kashiwara-Vergne (\cite{KV,AnSaTo}). Then, our results specialized to the case $M=\g^*$ provide an interpretation for the underlying key square-root-Jacobian factors appearing in $a_0^K$ in terms of our general theory of associative half-densities.


\stitpar{Contents.} We detail the contents further as follows.
\begin{itemize}
    \item In Section \ref{sec:half}, after recalling preliminary concepts in \S \ref{subsec:def}, we provide the main definition (Definition \ref{def:main}) of enhanced symplectic groupoid. In \S \ref{subsec:exist}, we prove the main theorem (Theorem \ref{thm:main}) providing existence and classification for such structures.  In \S \ref{subsec:props}, we also provide several additional properties (including the enhanced identity axiom in Proposition \ref{prop:idaxiom}) and illustrative examples. 

    \item In Section \ref{sec:app}, we return to Kontsevich's quantization formula \eqref{eq:kontstarfactors} in $M=\R^n$. In \S \ref{subsec:coordstar}, we first follow \cite{C22} to describe an underlying local symplectic groupoid $G_\pi$ and characterize its possible enhancements including the canonical one $\sigma^c$. 
In \S \ref{subsec:Khalf}, we then describe the formal families appearing as the asymptotic expansion of $G_{\e\pi}$ as $\e\to 0$, which recovers the formal family $G_K$ mentioned above. We also discuss formal families of enhancements including the $\sigma^K$ coming from $a_0^K$. We then prove the second  main result (Theorem \ref{thm:Kenhancement}) stating that $\sigma^K$ and $\sigma^c$ are equivalent formal families of enhancements.
Finally, in \S \ref{subsec:linear} we illustrate these results in the case $M=\g^*$ and prove Proposition \ref{prop:gastsigmaK} stating that $\sigma^K=\sigma^c$ in this case.
\end{itemize}


\stitpar{Line bundle valued half-densities.} In the main text, we focus on scalar valued half-densities. This is enough for our purposes since our main objectives are, in the terminology of semiclassical analysis (see \cite{GSbook}), of microlocal nature and we can thus restrict to neighborhoods of identities in the underlying symplectic groupoids. Nevertheless, we discuss in Appendix \ref{app:E} the extension to the line bundle valued case generalizing what happens with  global symbols of FIOs (\cite{GSbook,Mein}). This setting can be relevant for other types of quantization, for example, semiclassical versions of geometric quantization of symplectic $M$ with complex polarization. Such cases will be explored elsewhere.

\medskip

{\bf Acknowledgements.} The authors want to thank R. Fernandes and E. Meinrenken for useful conversations and suggestions, to N. Moshayedi for useful comments on the first version, and to J. Grabowski for pointing to relevant literature.
A.C.'s research  was partially supported by the grants CNPq PQ 309847/2021-4, CNPq Universal 402320/2023-9 and FAPERJ CNE E-26/204.097/2024. G.L. thanks CAPES for support during his PhD at Universidade Federal do Rio de Janeiro where much of this work was developed.

\section{Associative half-densities over groupoid multiplication}\label{sec:half}
In \S \ref{subsec:def}, we introduce our main object of study given by associative half-densities. We first recall half-densities and their composition along canonical relations following \cite{GSbook} (see also \cite{Mein}), while we assume the reader is familiar with basic notions of symplectic groupoids (see \cite{CrFeMa,Macbook}). In \S \ref{subsec:exist} we show the main result (Theorem \ref{thm:main}) providing existence and classification for these structures. Finally, in \S\ref{subsec:props} we deduce some general properties and illustrate with simple examples.

\subsection{Definition and the associativity condition}
\label{subsec:def}
Here, we move towards the definition of an associative half-density enhancing a symplectic groupoid multiplication (Definition \ref{def:main}).

\medskip

\titpar{Properties of $\alpha$-densities.} Let us first recall some basic facts about half-densities. For a vector space $V$ of dimension $n$, we denote $\beta:\R^n \overset{\sim}{\to} V$ a choice of linear basis. Then, the vector space $|V|^\alpha$ of {\bf $\alpha$-densities} consists of functions $\beta\mapsto \sigma(\beta)\in \C$ defined on linear bases $\beta$ and such that $\sigma(\beta \circ A)=|det(A)|^\alpha \sigma(\beta), \ A\in Gl(n)$. The set of {\bf half-densities} on $V$ is denoted $|V|^{1/2}$. Notice that, since the $Gl(n)$-action on bases is transitive, two $\alpha$-densities coincide $\sigma_1=\sigma_2$ iff they take the same value on some particular basis $\beta$.
\begin{example}
Given a symplectic vector space $(V,\omega)$, the \emph{Liouville half-density} $\lambda_V \in |V|^{1/2}$ is defined as
\begin{equation}\label{eq:liouvillehalf} 
\lambda_V =\left|\frac{\omega^{n}}{n!}\right|^{1/2}, \ dim(V)=2n.
\end{equation}
\end{example}
An isomorphism $\phi:V_1 \to V_2$ induces a bijection $\phi^*:|V_2|^\alpha \to |V_1|^\alpha$ via $(\phi^*\sigma_2)(\beta_1)=\sigma_2(\phi\circ \beta_1)$. A property that we shall often use is the following: for an exact sequence
\[ 0 \to V_1 \to V \to V_2 \to 0 \]
we have a natural isomorphism
\begin{equation}\label{eq:sesV}
|V_1|^\alpha \otimes |V_2|^\alpha \simeq |V|^\alpha, 
\end{equation}
where we denote resulting elements $\sigma=\sigma_1 \otimes \sigma_2 \in |V|^\alpha$ defined by $(\sigma_1 \otimes \sigma_2)(\beta_1 \oplus \beta_2) = \sigma_1(\beta_1)\sigma_2(\beta_2)$ for $\beta_1$ a basis of $V_1$ seen injected in $V$ and $\beta_2$ any complementing l.i. subset in $V$ whose projection onto $V_2$ is a basis. We recall that the fact that this is well defined comes from the factorization property for the determinant of block-triangular matrices. When $\sigma_1\neq 0$ or $\sigma_2 \neq 0$, we write
\[ \sigma_2 = \sigma/\sigma_1 \text{ or } \ \sigma_1 = \sigma/\sigma_2, \text{ respectively.} \] 

\medskip

\titpar{Composition in the linear symplectic category.} 
Let us consider symplectic vector spaces $V\equiv (V,\omega)$ and denote $\overline{V}\equiv (V,-\omega)$. Linear canonical relations $L:(V_1,\omega_1)\dto (V_2,\omega_2)$ between them are given by Lagrangian subspaces $L\subset \overline{V}_1 \times V_2$ and they can be composed as ordinary relations between sets yielding $L_2\circ L_1: V_1 \dto V_3$ for given canonical relations
$$ V_1 \overset{L_1}{\dto} V_2 \overset{L_2}{\dto} V_3 .$$
The fact that  $L_2\circ L_1$ is Lagrangian in $\ol{V}_1 \times V_3$ can be deduced from the following description, which we also independently need.
Let us consider $L_2 \ast L_1 = \{ (v_1,v_2,v_2,v_3): (v_1,v_2)\in L_1, (v_2,v_3)\in L_2 \}$ inside $V_1\times V_2\times V_2 \times V_3$ and the map
\begin{equation}\label{eq:alpha}
\alpha: L_2 \ast L_1 \to L_2\circ L_1, \ (v_1,v_2,v_2,v_3)\mapsto (v_1,v_3).
\end{equation}
Notice that $L_2\ast L_1 = C \cap (L_1 \times L_2)$ where $C=V_1 \times \Delta_{V_2} \times V_3$, with $\Delta_x=(x,x)$ the diagonal, is coisotropic inside $\ol{V}_1\times V_2 \times \ol{V}_2 \times V_3$. Hence $L_2\circ L_1$ is the corresponding symplectic reduction of $L_1\times L_2$, explaining why it is Lagrangian. 

Given half-densities $\sigma_j\in |L_j|^{1/2}, \ j=1,2$, we think of $(L_j,\sigma_j): V_j \dto V_{j+1}$ as \emph{enhanced morphisms} and define their composition as
\[ (L_2,\sigma_2) \circ (L_1,\sigma_1) = (L_2\circ L_1, \sigma_2 \circ \sigma_1) \]
where $\sigma_2 \circ \sigma_1$ is an underlying \emph{composition operation} (\cite[\S 7.1-7.2]{GSbook} and \cite[\S 3]{Mein}) for half-densities over linear canonical relations. We shall only need the explicit description of this operation in the particular case in which $Ker(\alpha)=0$, which works as follows.
Consider the short exact sequence
\[ 0 \to L_2\ast L_1 \to L_1 \times L_2 \overset{\tau}{\to} V_2 \to 0\]
with $\tau(v_1,v_2,v'_2,v_3)= v_2 - v_2'$. Since $Ker(\alpha)=0$, we have $\alpha: L_2\ast L_1 \simeq L_2\circ L_1$ is a natural isomorphism, which we omit from the notation, and define
\begin{equation}\label{eq:circsigmas}
\sigma_2 \circ \sigma_1 := (\sigma_1 \times \sigma_2)/\lambda_{V_2}
\end{equation}
for the Liouville half-density $\lambda_{V_2}$ on $V_2$. 
\begin{remark}\label{rmk:linearcompoformula}
More concretely, suppose we have a basis $\beta_\ast$ for $L_2\ast L_1$ and $\beta_2$ an complementing l.i. set in $L_1\times L_2$ such that $\tau\circ \beta_2$ is a basis of $V_2$. Assume further that $(\tilde\beta_1 \times \tilde \beta_2)\circ A=\beta_\ast \cup \beta_2$ for some bases $\tilde \beta_j$ of $L_j, \ j=1,2$, and some change of basis matrix $A$. Then,
\[ (\sigma_2\circ \sigma_1)(\alpha (\beta_\ast)) = \frac{(\sigma_1\times \sigma_2)(\beta_\ast \cup \beta_2)}{\lambda_{V_2}(\tau(\beta_2))} = |det(A)|^{1/2} \frac{\sigma_1(\tilde\beta_1)\sigma_2(\tilde\beta_2)}{\lambda_{V_2}(\tau(\beta_2))} \]
\end{remark}

\medskip

\titpar{Manifolds, transversality and composition of enhanced canonical relations.}
Here, we consider symplectic manifolds $S\equiv (S,\omega)$ and canonical relations $L:S_1\dto S_2$ between them given by Lagrangian submanifolds $L\hookrightarrow \ol{S}_1 \times S_2$. In this context, the set-theoretic compostion of relations $L_2\circ L_1$ may fail to define a Lagrangian submanifold, so that they form a "partial" category. A general condition which ensures that the composition is again a canonical relation is \emph{clean composition}, but for us it will suffice to recall the stronger \emph{transverse composition} condition, following \cite[\S 4]{GSbook} (see also \cite{Mein}). Given $L_j:S_j\dto S_{j+1},\ j=1,2$ canonical relations, they are said to have transverse composition when the intersection of submanifolds 
\[ L_2\ast L_1:= (L_1\times L_2)\cap (S_1\times \Delta_{S_2}\times S_3)\]
is transverse inside $S_1\times S_2\times S_2 \times S_3$ and the map $\alpha: L_2\ast L_1 \to L_2\circ L_1, (v_1,v_2,v_2,v_3)\mapsto (v_1,v_3)$ is proper with connected fibers.
The transversality condition implies that, at the linear level of tangent spaces, $Ker(D\alpha)=0$ so that we can apply the formulas recalled above.

\emph{Enhanced canonical relations} $(L,\sigma):S_1\to S_{2}$ are given by pairs consisting of a canonical relation $L$ and a half-density $\sigma \in \Gamma |TL|^{1/2}$ on it. When a composition $L_2\circ L_1$ is transverse (or, more generally, clean), we can define the composition $\sigma_2\circ \sigma_1$ as in the linear case thinking of the corresponding tangent spaces. In this way, we obtain a (partial) \emph{composition law in the enhanced symplectic category},
\[ (L_2,\sigma_2)\circ (L_1,\sigma_1) = (L_2\circ L_1,\sigma_2\circ \sigma_1). \]
As mentioned in the introduction, this composition law corresponds to that of Fourier Integral Operators under semiclassical limit (\cite[\S 8]{GSbook} and \cite{Mein}). The identity for this operation is $\id_S = (\gr(id_S),\lambda_S)$ where $id_S:S\to S$ is the identity map and $\lambda_S$ is the Liouville half-density \eqref{eq:liouvillehalf} on each tangent space. 

We shall be especially interested on graphs $\gr(f):S_1\dto S_2$ of maps $f:D\subset S_1 \to S_2$ defined on a submanifold $D\subset S_1$. These always compose transversely and we note further that when $\gr(f)$ is a canonical relation, then $f:D\to S_2$ must be a submersion. In these cases, since $\gr(f)\simeq D$ as manifolds, we shall identify enhancements of $\gr(f)$ with $\sigma \in \Gamma |TD|^{1/2}$. For $f_j:D_j\subset S_j \to S_{j+1}, \ j=1,2$, the composition law for enhancements $\sigma_j \in |TD_j|^{1/2}$ is given as follows. Using the notation $[V]$ to indicate a basis of a vector space $V$,
\begin{equation}\label{eq:compofs}
(\sigma_2\circ \sigma_1)([T_xD_0]) = \frac{\sigma_2([T_{f_1(x)}D_2]) \sigma_1([T_x D_0]\cup [C_x])}{\lambda_{S_2}([T_{f_1(x)}D_2] \cup D_xf_1([C_x])) }
\end{equation}
for each $x \in D_0:=f_1^{-1}(D_2)\subset D_1$, for any choice of complement $T_x D_1 = T_x D_0 \oplus C_x$ and for any choice of bases $[T_{f_1(x)}D_2]$ and $[C_x]$.

\titpar{The definition of associative half-densities.}
Let $(G\rra M, \omega)$ be a given symplectic groupoid (see \cite{CDW,Macbook} for the general definitions) and recall the enhanced identity morphism $\id_G=(\gr(id_G),\lambda_G)$ defined by the Liouville half-density \eqref{eq:liouvillehalf} on $(G,\omega)$.
The following is the main definition of this paper.
\begin{definition}\label{def:main}
    An \textbf{associative half-density} (or \emph{enhancement}) on $(G\rra M,\omega)$ is the data of a half-density $\sigma \in \Gamma |T(\gr(m))|^{1/2}$ defined along the graph of the multiplication map $m$ such that the corresponding enhanced canonical relation
        \[ (\gr(m),\sigma):(G,\omega)\times (G,\omega) \dto (G,\omega) \]
satisfies the following associativity axiom: the diagram 
\begin{equation}\label{eq:assocECR}
        \begin{matrix}
     (G,\omega)\times (G,\omega) \times (G,\omega)   &\overset{(gr(m),\sigma)\times \id_G}{\dto} & (G,\omega)\times (G,\omega) \\
     & & \\
     \id_G \times (gr(m),\sigma) \overset{|}{\rotatebox{-90}{$\dashrightarrow$}} &  & \overset{|}{\rotatebox{-90}{$\dashrightarrow$}} (gr(m),\sigma) \\
     (G,\omega)\times (G,\omega) & \overset{(gr(m),\sigma)}{\dto} & (G,\omega)
    \end{matrix}
        \end{equation}  
     commutes in the enhanced symplectic category. 
        We say that $\sigma$ is \textbf{nonvanishing} when $\sigma|_z \neq 0$ for every $z\in \gr(m)$. We refer to the data $(G\rra M,\omega,\sigma)$ as to an {\bf enhanced symplectic groupoid}.
\end{definition}

Let us recall the notation $G^{(k)}\subset G^k$ for strings $(g_1,\dots,g_k)$ of $k$ composable arrows, $s(g_j)=t(g_{j+1})$.
Since $m:G^{(2)}\subset G\times G \to G$ is a map, as mentioned earlier, we think of $\sigma$ as living on the domain consisting of composable arrows, $\sigma \in \Gamma |TG^{(2)}|^{1/2}$. Finally, notice that since $m$ is associative by definition of $G\rra M$, the only non-trivial axiom in the definition is the following {\bf associativity equation} for $\sigma$,
\begin{equation}\label{eq:assocsigma}
\sigma \circ (\sigma \times \lambda_G) = \sigma \circ (\lambda_G \times \sigma) \in |TG^{(3)}|^{1/2}
\end{equation}
where $\circ$ denotes composition of half-densities and $\times$ the half density associated with a product.
\begin{remark} (Coverings)\label{rmk:coverings}
If $\phi:G' \to G$ is a morphism of Lie groupoids which induces $id_M$ on objects and defines a covering on $s$-fibers, then both $\omega$ and $\sigma$ can be naturally lifted from $G$ to $G'$ yielding an enhanced symplectic groupoid structure on $G'$.
\end{remark}

\titpar{Morphisms and equivalences.} A morphism between enhanced symplectic groupoids $$(L,\gamma): (G\rra M,\omega,\sigma) \to (G'\rra M', \omega',\sigma')$$
is an enhanced canonical relation $(L,\gamma)$ between the underlying symplectic manifolds such that the compositions $(m',\sigma')\circ (L\times L,\gamma\times \gamma)$ and $(L,\gamma)\circ (m,\sigma)$ are clean and yield the same result in the enhanced category. 
For two enhancements on the same $(G\rra M,\omega)$ we consider a more restricted class of equivalences with $L=\gr(id_G)$ and $\gamma=\kappa \lambda_G$, as follows. 
\begin{definition}\label{def:equivenhanc}
Two enhancements $\sigma$ and $\sigma'$ of a symplectic groupoid $(G\rra M,\omega)$ are (simply) \textbf{equivalent} if there exists a smooth nonvanishing function $\factor:G \to \C^*$ such that
\[ \sigma'|_{(g_1,g_2)} = \frac{\factor(g_1)\factor(g_2)}{\factor(g_1g_2)} \sigma|_{(g_1,g_2)}.\]
We use the notation $[\sigma]$ for the corresponding equivalence class of enhancements on $(G\rra M,\omega)$ defined by $\sigma$.
\end{definition}
It will be clear after next subsection that if $\sigma$ satisfies the associativity condition, all the equivalent ones also do.

\begin{remarks}\label{rmk:variosconvolbisec}
We collect here some remarks about the definitions. First, when the data $(\gr(m),\sigma)$ comes from a star product $\star_\h$, the axiom (S1) for $\star_\h$ implies $\sigma|_{1\compo}\neq 0$ and we shall show in \S\ref{subsec:props} that this implies $\sigma\neq 0$ is globally non-vanishing when $G$ is $s$-connected. Second, Maslov line bundle valued half-densities appear naturally when $\star_\h$ is a Fourier integral operator \cite{GSbook,Mein} (see also \cite{CFer1,CFer2}). This type of half-densities is discussed in Appendix \ref{app:E}. Finally, recall that multiplication $m$ induces a composition (or "convolution") operation on \emph{Lagrangian bisections} $L\hookrightarrow (G,\omega)$. The enhancement $\sigma$ of $m$ allows to extend this operation to enhanced Lagrangian bisections $(L,\rho)$,
\[ (L_1,\rho_1), (L_2,\rho_2) \mapsto (\gr(m),\sigma)\circ ( (L_1,\rho_1) \times (L_2,\rho_2) ) \]
where we see $(L_1,\rho_1)\times (L_2,\rho_1):\ast \dto G\times G$ as morphisms from a point space and
in which the composition is always transverse. This can be seen as the semiclassical approximation to the $\star_\h$-product of WKB states corresponding to the $(L_j,\rho_j)$ (see also \cite{KarMas}).
\end{remarks}

\subsection{Existence and classification}
\label{subsec:exist}

For this subsection, let us fix a symplectic groupoid $(G\rra M,\omega)$. We aim at proving the main existence and classification result for associative enhancements, Theorem \ref{thm:main} below.

\titpar{The associativity equation in split form.}
To make the associativity condition more explicit, we aim at decomposing the tangent directions in $TG\compo$ into a sum of three contributions: those keeping the source, those keeping the target, and those moving the underlying point in $M$.
 To this end, let us first recall how \emph{splittings} can decompose tangent directions $TG$ in a groupoid and, after that, how to use them to decompose $TG\compo$ via \eqref{eq:phih} below, as wanted, leading to a split version of associativity in Proposition \ref{prop:crit2}.

Given a Lie groupoid $G\rra M$, we consider the short exact sequences:
\[ 0 \to Ker(Ts) \to TG \overset{Ts}{\to} s^*TM \to 0, \ \ \  0 \to Ker(Tt) \to TG \overset{Tt}{\to} t^*TM \to 0. \]
In this context, a \textbf{splitting $h^L$ (resp. $h^R$) of $TG$ along $Ts$ (resp. $Tt$)} is defined to be a vector bundle morphism over $id_G$ which splits the above sequence,
$$ \hl: s^*TM \to TG, \text{ (resp. } \hr: t^*TM \to TG\text{)} $$ 
and such that it reduces to $T1$ at identity points $1_x\in G$.
In \cite{GM10}, it is shown that any Lie groupoid $G$ admits such a splitting and that, given $h^L$, one can induce a splitting $\hr$ along $Tt$ via the formula
$\hr_g(v) = Tinv(\hl_{g^{-1}}(v))$.
We shall always think that $\hr$ is defined by an $\hl$ in this way.

Let us denote by $A^s \to M$ the vector bundle with fibers $A^s_x = Ker(D_{1(x)}s), \ x\in M$ and, similarly, $A^t_x = Ker(D_{1(x)}t), \ x\in M$. We recall that a standard convention is to choose $A^s$ as the Lie algebroid of $G$, with a Lie bracket inherited from right-invariant vector fields (see \cite{Macbook}).
Aiming at our desired decomposition of $TG\compo$, given splittings $\hl,\hr$, we consider the bundle isomorphisms
$$ \Sl: t^*A^s \oplus s^*TM \to TG, \ \ \Sr: t^*TM \oplus s^*A^t \to TG $$
defined by, 
$ \Sl_g(k^s,v) = TR_g(k^s) + \hl_g(v)$ and $ \Sr_g(v, k^t) = \hr_g(v)+TL_g(k^t)$, for $g\in G$ and 
where $R_{g_1}(g_2)=g_2g_1$ denotes right multiplication and $L_{g_1}(g_2)=g_1g_2$ denotes left multiplication.
%
%
%
Finally, we introduce the following bundle isomorphism defined, for each composable pair $(g_1,g_2)\in G^{(2)}$, by 
\begin{eqnarray} \phih_{g_1,g_2}: A^s_{t(g_1)} \oplus T_{s(g_1)}M \oplus A^t_{s(g_2)} \to T_{(g_1,g_2)}G^{(2)}, \nonumber \\
\phih_{g_1,g_2}(k^s, v, \tilde k^t) = \left( TR_{g_1}(k^s)+\hl_{g_1}(v)  , TL_{g_2}(\tilde k^t ) + \hr_{g_2}(v) \right) \label{eq:phih}
\end{eqnarray}

With the above notation, we can provide an explicit characterization of the associativity condition.
\begin{proposition}\label{prop:crit2}
Let $\sigma$ be a half-density on $G^{(2)}$.
	The associativity equation \eqref{eq:assocsigma} holds iff for each $(g_1,g_2,g_3) \in G^{(3)}$ and denoting
	$$ \sigma^h|_{g_1,g_2} := (\phih_{g_1,g_2})^*\sigma|_{g_1,g_2} \in \left| A^s_{t(g_1)} \oplus T_{s(g_1)}M \oplus A^t_{s(g_2)}\right|^{1/2},$$
	we have 
	\begin{eqnarray}\label{eq:assocsimpl2}
	\frac{\sigma^h|_{g_1g_2,g_3} \left( [A^s_{t(g_1)}]\oplus [T_{s(g_2)}M] \oplus [A^t_{s(g_3)}] \right)  \sigma^h|_{g_1,g_2} \left( [A^s_{t(g_1)}]\oplus [T_{s(g_1)}M] \oplus [A^t_{s(g_2)}] \right)}{\lambda_G\left(\Sl_{g_1g_2}([A^s_{t(g_1)}]\oplus [T_{s(g_2)}M])\right)} =  \nonumber\\
	\frac{\sigma^h|_{g_2,g_3} \left( (\Sl_{g_2})^{-1}\Sr_{g_2}\left( [T_{s(g_1)}M] \oplus [A^t_{s(g_2)}] \right) \oplus [A^t_{s(g_3)}] \right)  
		\sigma^h|_{g_1,g_2g_3} \left( [A^s_{t(g_1)}]\oplus [T_{s(g_1)}M] \oplus [A^t_{s(g_3)}] \right)}{\lambda_G\left(\Sr_{g_2g_3}( [T_{s(g_1)}M] \oplus [A^t_{s(g_3)}] )\right)  }
	\end{eqnarray}
	for any particular choice of splittings $\hl,\hr$ of $TG$, defining $\Sl,\Sr$ and $\phih$ as above, and of linear bases $[A^s_{t(g_1)}]$, $[T_{s(g_1)}M]$, $[T_{s(g_2)}M]$, $[A^t_{s(g_2)}]$,  $[A^t_{s(g_3)}]$ of the corresponding spaces.
\end{proposition}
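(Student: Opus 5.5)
The plan is to compute both sides of \eqref{eq:assocsigma} with the composition formula \eqref{eq:compofs} for graphs of maps. Both sides are evaluated on one and the same basis of $T_{(g_1,g_2,g_3)}G^{(3)}$, adapted to the splittings.

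\emph{Choice of basis for $TG^{(3)}$.} Counting dimensions ($\dim G=2n$, $\dim M=n$, $\operatorname{rk}A^s=\operatorname{rk}A^t=n$) gives $\dim G^{(3)}=4n$. So I parametrize
\[ T_{(g_1,g_2,g_3)}G^{(3)} \simeq A^s_{t(g_1)}\oplus T_{s(g_1)}M \oplus T_{s(g_2)}M\oplus A^t_{s(g_3)} \]
by the map
\[ (k^s,v,w,\tilde k^t)\mapsto \bigl(TR_{g_1}k^s+\hl_{g_1}v,\ \hr_{g_2}v+\hl_{g_2}w,\ \hr_{g_3}w+TL_{g_3}\tilde k^t\bigr). \]
One checks that this is an isomorphism using $Ts\circ \hl=\id$ and $Tt\circ\hr=\id$. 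I take the corresponding basis $\beta=[A^s_{t(g_1)}]\oplus[T_{s(g_1)}M]\oplus[T_{s(g_2)}M]\oplus[A^t_{s(g_3)}]$, and use the fact that two half-densities agree iff they agree on one basis.

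\emph{Left-hand side.} Set $f_1=m\times id_G$ on $D_1=G^{(2)}\times G$ and $D_2=G^{(2)}$, so that $D_0=G^{(3)}$. A complement $C$ of $TG^{(3)}$ in $TD_1$ is given by the vectors $(0,0,\hr_{g_3}(u))$ with $u\in T_{t(g_3)}M$, and $Df_1$ fixes them.
\begin{itemize}
\item In the numerator, the factor $(\sigma\times\lambda_G)(\beta\cup[C])$ splits block-triangularly. This gives $\sigma^h|_{g_1,g_2}([A^s]\oplus[T_{s(g_1)}M]\oplus[A^t_{s(g_2)}])$ times $\lambda_G(\Sr_{g_3}([T_{s(g_2)}M]\oplus[A^t_{s(g_3)}]))$. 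Here one rewrites $\hl_{g_2}w$ modulo $TL_{g_2}(A^t)$ via $\Sr_{g_2}$; only the determinant of the change of basis matters, and that is absorbed into the choice of basis of $A^t_{s(g_2)}$.
\item In the denominator, $\lambda_{G\times G}=\lambda_G\times\lambda_G$ is evaluated on $\phih_{g_1g_2,g_3}$-adapted vectors together with $C$. By block-triangularity (subtracting the $C$-directions kills the $\hr_{g_3}$ components), it factors as $\lambda_G(\Sl_{g_1g_2}([A^s]\oplus[T_{s(g_2)}M]))\cdot\lambda_G(\Sr_{g_3}(\dots))$.
\item The second factor cancels the $\lambda_G$ term from the numerator.
\item Finally I push $\beta$ forward by $Dm\times\id$. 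I claim that $Dm(TR_{g_1}k^s,0)=TR_{g_1g_2}k^s$, and that $Dm(\hl_{g_1}v,\hr_{g_2}v+\hl_{g_2}w)$ agrees with $\hl_{g_1g_2}w$ modulo $TR_{g_1g_2}(A^s)$. The latter holds because its $Ts$-image is $w$, and its $v$-part has zero $Ts$-component.
\end{itemize}
Hence the matrix expressing $D(m\times\id)\beta$ in the $\phih_{g_1g_2,g_3}$-adapted basis is block unitriangular, which yields the left side of \eqref{eq:assocsimpl2}.

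\emph{Right-hand side.} This is symmetric: $f_1=id\times m$, with complement $C=\{(\hl_{g_1}u,0,0)\}$. Now the middle entry $\hr_{g_2}v+\hl_{g_2}w$ must be read in the $\phih_{g_2,g_3}$-adapted form, i.e.\ through $\Sl_{g_2}$. Writing it first as $\Sr_{g_2}(v,\cdot)$ and then applying $(\Sl_{g_2})^{-1}$ produces exactly the term $(\Sl_{g_2})^{-1}\Sr_{g_2}([T_{s(g_1)}M]\oplus[A^t_{s(g_2)}])$. The Liouville denominator becomes $\lambda_G(\Sr_{g_2g_3}(\cdots))$, using the identity $Dm(\hl_{g_2}\cdot,\hr_{g_3}\cdot)\equiv \hr_{g_2g_3}$ modulo $TL(A^t)$.

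\emph{Independence of choices.} Changing any of the chosen bases rescales both sides of \eqref{eq:assocsimpl2} by the same power of determinants, so the criterion does not depend on them. It also does not depend on the splittings, since the original equation \eqref{eq:assocsigma} does not.

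\emph{Main obstacle.} The delicate step is the bookkeeping showing that every change-of-basis matrix is block triangular with the stated diagonal blocks. This is what makes the $Dm$-images of the $\hl/\hr$ directions differ from the adapted splitting vectors only by $TR(A^s)$ or $TL(A^t)$ terms. It is also what makes the Liouville factor on the "spectator'' component cancel exactly against the $\lambda_G$ in $\sigma\times\lambda_G$. I would verify these triangularity claims first, using only $s(gh)=s(h)$, $t(gh)=t(g)$ and the splitting identities.
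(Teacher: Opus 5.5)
Your overall strategy is the paper's: evaluate both sides of \eqref{eq:assocsigma} with \eqref{eq:compofs}, take the complement $C=\{(0,0,\hr_{g_3}u)\}$, and use block-triangularity so the spectator $\lambda_G$ factor cancels. The gap is in the basis of $T_{(g_1,g_2,g_3)}G^{(3)}$ on which everything is evaluated: the map $(k^s,v,w,\tilde k^t)\mapsto(TR_{g_1}k^s+\hl_{g_1}v,\ \hr_{g_2}v+\hl_{g_2}w,\ \hr_{g_3}w+TL_{g_3}\tilde k^t)$ does not take values in $TG^{(3)}$.

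\begin{itemize}
\item Tangency requires $Ts(X_1)=Tt(X_2)$ and $Ts(X_2)=Tt(X_3)$. You get $Ts(X_1)=v$ but $Tt(X_2)=v+Tt(\hl_{g_2}w)$, and $Tt(X_3)=w$ but $Ts(X_2)=w+Ts(\hr_{g_2}v)$.
\item The identities $Ts\circ\hl=id$ and $Tt\circ\hr=id$ say nothing about the cross terms $Tt\circ\hl$ and $Ts\circ\hr$, and these do not vanish. At a unit triple $\hl_{1_x}=\hr_{1_x}=T1$, and $(0,v,0,0)\mapsto(T1v,T1v,0)$ has $Ts(X_2)=v\neq 0=Tt(X_3)$.
\item In Example \ref{ex:pi=0} ($s=t$), tangent vectors to $G^{(3)}$ have a single base component, so two independent $TM$-parameters cannot occur. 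The correct model is $A^s\oplus TM\oplus A^t\oplus A^t$. It has the same dimension $4n$, so your dimension count does not detect the problem.
\item Consequently your later steps apply $Dm$ to pairs such as $(\hl_{g_1}v,\ \hr_{g_2}v+\hl_{g_2}w)$. These are not in $TG^{(2)}$, so those claims are not meaningful as stated.
\end{itemize}

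The repair is the paper's choice of basis. Take the middle vector $X_2\in T_{g_2}G$ arbitrary and correct the outer ones by the splittings:
\[ X_1=TR_{g_1}k^s+\hl_{g_1}(Tt X_2), \qquad X_3=\hr_{g_3}(Ts X_2)+TL_{g_3}\tilde k^t. \]
This is the basis $(id\times\psi^R_{g_2,g_3})\circ(\psi^L_{g_1,g_2}\times id)([A^s_{g_1}]\oplus[T_{g_2}G]\oplus[A^t_{g_3}])$, and in it $w=Ts X_2$ is determined, not free.

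Your final ``pushforward'' step is also unnecessary. \eqref{eq:compofs} lets you pick $[T_{f_1(x)}D_2]$ freely, so you can take it $\phih_{g_1g_2,g_3}$-adapted from the start; no pushforward of your basis enters. Moreover, ``block unitriangularity of $D(m\times id)\beta$'' is not well posed, since $Df_1$ maps the $4n$-dimensional $TG^{(3)}$ onto the $3n$-dimensional $TG^{(2)}$.

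The paper first derives the intermediate criterion \eqref{eq:assocsimpl}, with $\sigma$ evaluated on $\psi^L$/$\psi^R$-adapted bases and $[T_{g_2}G]$, $[T_{g_1g_2}G]$, $[T_{g_2g_3}G]$ arbitrary. Only then does it set $[T_{g_2}G]=\Sr_{g_2}([T_{s(g_1)}M]\oplus[A^t_{s(g_2)}])$. That specialization is what produces the $(\Sl_{g_2})^{-1}\Sr_{g_2}$ argument on the right-hand side, in place of your ad hoc rewriting ``absorbed into the choice of basis''.
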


\begin{proof}
Let us first show that \eqref{eq:assocsigma} is equivalent to
\begin{eqnarray}
\frac{\sigma\left(\psi^R_{g_1g_2,g_3}([T_{g_1g_2}G]\times [Ker D_{g_3}t])\right)\sigma\left(\psi^L_{g_1,g_2}([KerD_{g_1}s]\times [T_{g_2}G] ) \right)}{\lambda_G([T_{g_1g_2}G])} =  \nonumber \\
= \frac{\sigma\left(\psi^L_{g_1,g_2g_3}( [Ker D_{g_1}s] \times [T_{g_2g_3}G])\right)\sigma\left(\psi^R_{g_2,g_3}( [T_{g_2}G] \times [KerD_{g_3}t] ) \right)}{\lambda_G([T_{g_2g_3}G])} \label{eq:assocsimpl}
\end{eqnarray}
for any particular choice of splittings $\hl,\hr$ defining $\psi^L_{g_1,g_2}(v_1,v_2)=(v_1+h^L_{g_1}(Tt(v_2)),v_2)$ and $\psi^R_{g_1,g_2}(v_1,v_2)=(v_1,v_2+h^R_{g_2}(Ts(v_1)) )$, with $v_j\in T_{g_j}G$, and for any choice of bases $[Ker D_{g_1}s]$, $[T_{g_2}G]$, $[Ker D_{g_3}t]$, $[T_{g_1g_2}G]$ and $[T_{g_2g_3}G]$.
Given such a choice of bases, denoting $[A^s_g]:=[Ker D_{g}s]$ and $[A^t_g]:=[Ker D_{g}t]$ for simplicity, we set 
$$[T_{(g_1,g_2,g_3)}G^{(3)}]:= (id\times \psi^R_{g_2,g_3})\circ(\psi^L_{g_1,g_2}\times id)([A^{s}_{g_1}]\oplus[T_{g_2}G]\oplus[A^{t}_{g_3}]).$$

Focusing on the l.h.s. of \eqref{eq:assocsigma}, we are in the setting of \eqref{eq:compofs} with $S_1=G^3\supset D_1=G\compo\times G$ and $f_1=m\times id_G$, $S_2=G^2\supset D_2=G\compo$ with $f_2=m$ and $S_3=G$. Note that, in this case, $D_0=G^{(3)}\ni x=(g_1,g_2,g_3)$ and we can take $C_x = \{(0_{g_1},0_{g_2},h^R_{g_3}(v)):v\in T_{tg_3}M\}\subset TG^3$. Moreover, we choose $[T_{f_1(x)}D_2]=\psi^R_{g_1g_2,g_3}([T_{g_1g_2}G] \oplus [A^t_{g_3}])$.
We then compute using relation \eqref{eq:compofs}:
$$\sigma \circ (\sigma \times \lambda_G)([T_{(g_1,g_2,g_3)}G^{(3)}]) = $$
$$=\frac{\sigma([T_{f_1(x)}D_2]) \ (\sigma \times \lambda_G)([T_{(g_1,g_2,g_3)}G^{(3)}]\cup C_x )}{\lambda_{G\times G}([T_{f_1(x)}D_2]\cup (\{0\}\times h_{g_3}^R([T_{t(g_3)}M])))}$$
 $$=\frac{\sigma(\psi^R_{g_1g_2,g_3}([T_{g_1g_2}G]\oplus [A^t_{g_3}]))\ \sigma(\psi^L_{g_1,g_2}([A^{s}_{g_1}]\oplus [T_{g_2}G]))}{\lambda_G([T_{g_1g_2}G])}.$$
In the last equality above, we used that the change of bases taking $[T_{(g_1,g_2,g_3)}G^{(3)}]\cup C_x $ and $[T_{f_1(x)}D_2]\cup \{0\}\times h_{g_3}^R([T_{t(g_3)}M])$ into the product ones $\psi^L_{g_1,g_2}([A^s_{g_1}]+[T_{g_2}G])\times([A^t_{g_3}]\cup h^R_{g_3}[T_{tg_3}M])$ and $[T_{g_1g_2}G]\times ([A^t_{g_3}]\cup h^R_{g_3}[T_{tg_3}M])$, respectively, have determinant $1$.
Computing analogously $\sigma \circ (\lambda_G\times \sigma)([T_{(g_1,g_2,g_3)}G^{(3)}])$, we finish the proof of \eqref{eq:assocsimpl}.

Finally, to get \eqref{eq:assocsimpl2}, we evaluate \eqref{eq:assocsimpl} on the following choice of bases: 
$$[Ker D_{g_1}s] = TR_{g_1}[A^s_{t(g_1)}], \ [T_{g_2}G]= \Sr_{g_2}([T_{s(g_1)}M]\oplus [A^t_{s(g_2)}]),$$
	 $$ \ [Ker D_{g_3}t] =TL_{g_3}[A^t_{s(g_3)}] , \ [T_{g_1g_2}G] = \Sl_{g_1g_2}([A^s_{t(g_1)}] \oplus [T_{s(g_2)}M]), \ \ [T_{g_2g_3}G]=\Sr_{g_2g_3}([T_{s(g_1)}M]\oplus [A^t_{s(g_3)}]),$$
and by direct computation using the identity
$$ \phih_{g_1,g_2}(k^s,v,\tilde k^t) = (\Sl_{g_1}(k^s,v), TL_{g_2}(\tilde k^t) + h^R_{g_2}(Ts(\Sl_{g_1}(k^s,v))) )=(TR_{g_1}(k^s)+h^L_{g_1}(Tt(\Sr_{g_2}(v,\tilde k^t)),  \Sr_{g_2}(v,\tilde k^t)).$$
This finishes the proof.
\end{proof}

The explicit form above immediately implies the following.
\begin{corollary}\label{cor:f}
If there exists an enhancement $\sigma_0$ of $m$ satisfying \eqref{eq:assocsigma} and which is non-vanishing, $\sigma_0\neq 0$, any other solution of \eqref{eq:assocsigma} must be of the form $f\sigma_0$ for a unique smooth $f:G^{(2)} \to \C$ satisfying
\begin{equation}\label{eq:assocf}
f(g_1g_2,g_3) f(g_1,g_2) = f(g_1, g_2g_3) f(g_2,g_3), \ \forall (g_1,g_2,g_3)\in G^{(3)}
\end{equation}
\end{corollary}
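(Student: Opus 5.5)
Since $\sigma_0$ is nonvanishing, every half-density $\sigma$ on $G^{(2)}$ is uniquely $\sigma=f\sigma_0$ with $f:=\sigma/\sigma_0$. Here $f$ is smooth and complex-valued, because the ratio of two half-densities on the same line bundle $|TG^{(2)}|^{1/2}$ is a function and $\sigma_0$ never vanishes. Uniqueness of $f$ is therefore immediate. What remains is to show that $f\sigma_0$ satisfies \eqref{eq:assocsigma} iff $f$ satisfies \eqref{eq:assocf}; only the direction "solution $\Rightarrow$ \eqref{eq:assocf}" is needed, but the argument gives both.

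I will use Proposition \ref{prop:crit2}. Fix splittings $\hl,\hr$ and bases as in that proposition. Pullback by the linear isomorphism $\phih_{g_1,g_2}$ commutes with multiplication by scalars, so $(f\sigma_0)^h|_{g_1,g_2}=f(g_1,g_2)\,\sigma_0^h|_{g_1,g_2}$. Each of the four $\sigma^h$ factors in \eqref{eq:assocsimpl2} is a half-density evaluated at a fixed point on a fixed basis. Substituting $f\sigma_0$ thus multiplies the left-hand side by $f(g_1g_2,g_3)f(g_1,g_2)$ and the right-hand side by $f(g_2,g_3)f(g_1,g_2g_3)$. The Liouville denominators do not involve $\sigma$ and are unchanged.

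Since $\sigma_0$ satisfies \eqref{eq:assocsimpl2}, write $c$ for the common value of both sides for $\sigma_0$. This $c$ is nonzero: it is a product of values of the nonvanishing $\sigma_0^h$ on bases, divided by Liouville values, which are nonzero because $\lambda_G$ is nonvanishing. The basis $(\Sl_{g_2})^{-1}\Sr_{g_2}([T_{s(g_1)}M]\oplus[A^t_{s(g_2)}])\oplus[A^t_{s(g_3)}]$ appearing on the right is still a basis, since $(\Sl_{g_2})^{-1}\Sr_{g_2}$ is an isomorphism, so $\sigma_0^h$ does not vanish there either.

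Equation \eqref{eq:assocsimpl2} for $f\sigma_0$ then reads $f(g_1g_2,g_3)f(g_1,g_2)\,c=f(g_1,g_2g_3)f(g_2,g_3)\,c$. Dividing by $c\neq 0$ gives exactly \eqref{eq:assocf} at every $(g_1,g_2,g_3)\in G^{(3)}$. By Proposition \ref{prop:crit2} this is equivalent to \eqref{eq:assocsigma} for $f\sigma_0$.

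The argument has no real obstacle. The only point needing care is the nonvanishing of $c$, which follows from the nonvanishing of $\sigma_0$ and of the Liouville half-density.
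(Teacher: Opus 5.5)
Your proposal is correct and follows the paper's own route: the paper deduces the corollary immediately from the explicit form \eqref{eq:assocsimpl2} in Proposition~\ref{prop:crit2}. There, substituting $f\sigma_0$ rescales the two sides by $f(g_1g_2,g_3)f(g_1,g_2)$ and $f(g_1,g_2g_3)f(g_2,g_3)$ respectively, and the nonvanishing of $\sigma_0$ (together with that of $\lambda_G$) lets you cancel the common nonzero value.
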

Nonvanishing enhancements are thus identified with multiplicative 2-cocycles, as follows.
\begin{remark}($2$-cocycles)\label{rmk:cocycl}
	If we assume further that $f\neq 0$, so that $f:G^{(2)} \to \C^*$ can be seen as a multiplicative $2$-cochain for $G$, then \eqref{eq:assocf} can be interpreted as a \emph{multiplicative $2$-cocycle condition} for $f$: $ \delta_{\C^*} f = 1, $
	where $\delta_{\C^*} f: G^{(3)}\to \C^*$ is defined by
	$$ \delta_{\C^*} f(g_1,g_2,g_3) := f(g_2,g_3)f(g_1g_2,g_3)^{-1}f(g_1,g_2g_3)f(g_1,g_2)^{-1}.$$
	Moreover, when $f = e^{\tilde f}$ for $\tilde f: G^{(2)} \to \C$, then $\tilde f$ has to be an (ordinary) \emph{additive} $2$-cocycle on $G$,
	\begin{equation}\label{eq:addidelta}
	\delta \tilde f = 0, \ \delta \tilde f(g_1,g_2,g_3): = \tilde f(g_2,g_3) - \tilde f(g_1g_2,g_3) + \tilde f(g_1,g_2g_3)- \tilde f(g_1,g_2).
	\end{equation}
\end{remark}

\medskip

\titpar{Canonical enhancements and the main result.}
Moving towards showing existence, let us consider the following exact sequence: for $(g_1,g_2)\in G\compo$,
\begin{equation}\label{eq:shortseq}
0 \to T_{(g_1,g_2)}G^{(2)} \to T_{g_1}G \times T_{g_2}G  \overset{Ds_1-Dt_2}{\to} T_{s(g_1)=t(g_2)}M \to  0
\end{equation}

\begin{definition}\label{def:canonsigma}
Given a non-vanishing half-density on $M$, $\mu \in \Gamma |TM|^{1/2}, \ \mu\neq 0$, and considering the Liouville half density $\lambda_G$ in $(G,\omega)$, the half density $\sigma^c$ on $G^{(2)}$ defined by 
\[ \sigma^c = (\lambda_G\times \lambda_G)/\mu \in \Gamma |TG^{(2)}|^{1/2} \]
via the above short exact sequence is called the \textbf{canonical enhancement} of $\gr(m)$ associated with $\mu$.
\end{definition}
Note that, by construction, the canonical enhancement is non-vanishing, $\sigma^c\neq 0$. As we shall see in \S\ref{subsec:props}, the data of $\mu$ can be seen as an enhancement of the units $1:M\hookrightarrow G$, which is also a Lagrangian submanifold (see the corresponding identity axiom in \eqref{eq:idaxiom}).

\begin{theorem}\label{thm:main} (Existence and classification of enhancements) Let $(G\rightrightarrows M, \omega)$ be a symplectic groupoid. 
	For any choice of non-vanishing half-density $\mu$ on $TM$, then the associated canonical enhancement $\sigma^c$ of $\gr(m)$ satisfies the associativity condition \eqref{eq:assocsigma}. Consequently, $(G\rra M,\omega,\sigma^c)$ defines an enhanced symplectic groupoid and any other associative enhancement of $m$ must be of the form $f\sigma^c$ for a unique function $f:G^{(2)}\to \C$ satisfying equation \eqref{eq:assocf}. 
\end{theorem}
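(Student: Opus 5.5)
The plan is to verify the associativity equation \eqref{eq:assocsigma} for $\sigma^c$ directly, computing both sides on one common basis of $T_{(g_1,g_2,g_3)}G^{(3)}$. The classification part is then immediate: since $\sigma^c\neq 0$, Corollary \ref{cor:f} applies with $\sigma_0=\sigma^c$ and gives the unique $f$ satisfying \eqref{eq:assocf}.

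For the existence part, I will avoid the split form of Proposition \ref{prop:crit2} and argue at the level of short exact sequences. Both composites $\sigma^c\circ(\sigma^c\times\lambda_G)$ and $\sigma^c\circ(\lambda_G\times\sigma^c)$ are computed by formula \eqref{eq:compofs}. Each is the quotient of a product of Liouville half-densities by Liouville half-densities and copies of $\mu$. The expected result in both cases is
\[
(\lambda_G\times\lambda_G\times\lambda_G)/(\mu\otimes\mu),
\]
taken with respect to the exact sequence
\[
0\to T G^{(3)}\to TG^3\to T_{s(g_1)}M\oplus T_{s(g_2)}M\to 0,
\]
where the last map is $(v_1,v_2,v_3)\mapsto (Ds\,v_1-Dt\,v_2,\ Ds\,v_2-Dt\,v_3)$.

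For the left-hand side I would proceed in three steps.
\begin{enumerate}
\item Write $\sigma^c\times\lambda_G=(\lambda_G^{\times 3})/\mu_{s(g_1)}$ on $G^{(2)}\times G$, using \eqref{eq:shortseq} in the first two factors.
\item Apply \eqref{eq:compofs} with $f_1=m\times id$. The complement $C_x$ projects isomorphically, via $Ds(g_2)-Dt(g_3)$, onto $T_{s(g_2)}M$.
\item Check that dividing by $\lambda_{G\times G}$ and multiplying by $\sigma^c|_{(g_1g_2,g_3)}=(\lambda_G\times\lambda_G)/\mu_{s(g_2)}$ cancels one factor $\lambda_G$ at $g_1g_2$.
\end{enumerate}
What remains is exactly $\lambda_G^{3}$ divided by $\mu_{s(g_1)}\otimes\mu_{s(g_2)}$. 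The right-hand side is symmetric, with the roles of the two copies of $M$ interchanged, and yields the same expression.

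The main bookkeeping is to phrase this cleanly. I would choose adapted bases: a basis of $TG^{(3)}$, together with lifts $h^R_{g_2}(Ts$-directions$)$ and $h^R_{g_3}$-lifts (respectively $h^L$-lifts for the other side) that map to bases of the two copies of $TM$. With these choices all transition matrices are block-unitriangular, as in the proof of Proposition \ref{prop:crit2}. The determinant-factorization property \eqref{eq:sesV} then makes both sides equal to the same quotient.

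\emph{The main obstacle} is the middle step: showing that the denominator $\lambda_{S_2}([T_{f_1(x)}D_2]\cup Df_1[C_x])$ in \eqref{eq:compofs} exactly cancels the factor $\lambda_G\times\lambda_G$ coming from $\sigma^c$ at $(g_1g_2,g_3)$. This requires that the basis $[T_{f_1(x)}D_2]\cup Df_1[C_x]$ of $T(G\times G)$ and the basis used to evaluate $\sigma^c=(\lambda_G\times\lambda_G)/\mu$ differ by a determinant-one change. That holds because both are built from $[TG^{(2)}]$ completed by the same complement $(0,h^R_{g_3}(v))$, whose image under $Ds_1-Dt_2$ is $-v$. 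The sign disappears under absolute values. I would check this determinant carefully once and then invoke symmetry for the other side.
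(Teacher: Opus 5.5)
Your proof is correct, but it takes a different route from the paper's.

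\emph{The paper's route.} It first rewrites associativity as the split criterion of Proposition \ref{prop:crit2}. It then expresses $\sigma^c$ in split form through $\phih$, $\Sl$, $\Sr$ (eq. \eqref{eq:defsigc}) and checks \eqref{eq:assocsimpl2} term by term. The only real work is the mixed factor $(\star)$: there $\sigma^c|_{(g_2,g_3)}$ must be evaluated on $(\Sl_{g_2})^{-1}\Sr_{g_2}(\cdots)$, and a change-of-basis factor $|\det B|^{1/2}$ is absorbed into $\lambda_G$ to turn a $\Sl_{g_2}$-basis into a $\Sr_{g_2}$-basis.

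\emph{Your route.} You compute both composites intrinsically and show they share the value $\lambda_G^{\times 3}/(\mu\otimes\mu)$. This works, with one correction to your step 3. In \eqref{eq:compofs} you may evaluate $\sigma^c|_{(g_1g_2,g_3)}=(\lambda_G\times\lambda_G)/\mu$ using the complement $D_xf_1[C_x]$ itself. Then the denominator $\lambda_{G\times G}$ cancels entirely, both factors and not just the one at $g_1g_2$. What is left is $1/\mu_{s(g_2)}$ evaluated on $(Ds_1-Dt_2)D_xf_1[C_x]$. Since $s\circ m=s\circ\mathrm{pr}_2$, the map $(Ds_1-Dt_2)\circ D_xf_1$ is $(v_1,v_2,v_3)\mapsto Ds\,v_2-Dt\,v_3$, whose kernel is $TG^{(3)}$. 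You are therefore left with an iterated quotient along $TG^{(3)}\subset TG^{(2)}\times TG\subset TG^3$. By block-triangularity of determinants, this equals the quotient by $\mu\otimes\mu$ along your exact sequence. The other side uses $t\circ m=t\circ \mathrm{pr}_1$ and the filtration through $TG\times TG^{(2)}$, and gives the same value.

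\emph{What each buys.} Your argument needs no splittings: any complements work, and $h^L,h^R$ are only a convenience. It makes associativity transparent, as the multiplicative version of the additive heuristic in the paper's remark on understanding the canonical solution by analogy. It also extends directly to longer strings, giving $\lambda_G^{\times k}/\mu^{\otimes(k-1)}$ on $G^{(k)}$. The paper's route instead produces a split criterion valid for arbitrary $\sigma$, from which Corollary \ref{cor:f} (which you use for the classification part) is read off, together with an explicit split formula for $\sigma^c$.
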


\begin{proof}
Let us first use the splitting \eqref{eq:phih} of $TG\compo$ coming from splittings for $G$ and use it to prove the following characterization of $\sigma^c$:
\begin{eqnarray}\label{eq:defsigc}
\sigma^c|_{g_1,g_2}( \phih_{g_1,g_2}\left( [A^s_{t(g_1)}]\oplus [T_{s(g_1)}M] \oplus [A^t_{s(g_2)}]\right))= \nonumber \\
=\frac{ \lambda_G\left(\Sl_{g_1}([A^s_{t(g_1)}]\oplus [T_{s(g_1)}M])\right) \lambda_G\left(\Sr_{g_2}([T_{s(g_1)}M]\oplus [A^t_{s(g_2)}])\right) }{\mu([T_{s(g_1)}M])}
\end{eqnarray}
for any choice of basis $[A^s_{t(g_1)}], [T_{s(g_1)}M], [A^t_{s(g_2)}]$. The defining short exact sequence \eqref{eq:shortseq} is isomorphic to the following one through the indicated maps,

\hskip-0.5cm
\begin{tikzcd}
0\arrow{r} & A^s_{t(g_2)}\oplus T_{s(g_1)}M\oplus A^t_{s(g_1)}\arrow{d}{\phi^h_{(g_1,g_2)}}\arrow{r}{i'} & A^s_{t(g_2)}\oplus T_{s(g_1)}M\oplus T_{s(g_1)}M\oplus A^t_{s(g_1)}\arrow{r}{P_2-P_1} \arrow{d}{\Sigma^L_{g_1}\times \Sigma^R_{g_2}} & T_{s(g_1)}M\arrow{r} \arrow{d}{id} & 0\\0 \arrow{r} & T_{(g_1,g_2)}G^{(2)} \arrow{r}{i} & T_{g_1}G\oplus T_{g_2}G\arrow{r}{Ds-Dt} & T_{s(g_1)}M\arrow{r} & 0  
\end{tikzcd}

\noindent where $i'(k^s,v,k^t)=(k^s,v,v,k^t)$ and $(P_1-P_2)(k^s,v_1,v_2,k^t)=v_1-v_2$. Eq. \eqref{eq:defsigc} then follows by evaluating the corresponding quotient $(\phi^h_{(g_1,g_2)})^*\sigma^c = ((\Sigma^L_{g_1})^*\lambda_G\times (\Sigma^R_{g_2})^*\lambda_G)/\mu$, as explained below \eqref{eq:sesV}, with $\beta_1= [A^s_{t(g_1)}] + \Delta_{[T_{s(g_1)}M]} + [A^t_{s(g_2)}]$ and $\beta_2 = 0 + (0+ [T_{s(g_1)}M]) + 0$.

Finally, we use Prop. \ref{prop:crit2} to directly verify eq. \eqref{eq:assocsimpl2} for $\sigma=\sigma^c$ characterized by \eqref{eq:defsigc}. The only non-obvious step consists in evaluating the following factor appearing in the r.h.s. of \eqref{eq:assocsimpl2},
	$$ (\star) = \sigma^c|_{g_2,g_3} \phih_{g_2,g_3} \left( (\Sl_{g_2})^{-1}\Sr_{g_2}\left( [T_{s(g_1)}M] \oplus [A^t_{s(g_2)}] \right) \oplus [A^t_{s(g_3)}] \right) $$
	since the basis in which $\phih$ is evaluated above is not in the form present in the formula \eqref{eq:defsigc} for $\sigma^c$. Notice that
	$$ (\Sl_{g_2})^{-1}\Sr_{g_2} : T_{s(g_1)}M \oplus A^t_{s(g_2)} \to A^s_{s(g_1)}\oplus T_{s(g_2)}M.$$
	Let $[A^s_{s(g_1)}]$ be an arbitrary basis and denote $B$ the change of basis
	$$ (\Sl_{g_2})^{-1}\Sr_{g_2}\left( [T_{s(g_1)}M] \oplus [A^t_{s(g_2)}] \right)= ([A^s_{s(g_1)}] \oplus [T_{s(g_2)}M])\cdot B,$$
	where $[T_{s(g_2)}M]$ is the one given in \eqref{eq:assocsimpl2}. Then,
	\begin{eqnarray*} (\star)& = |det(B)|^{1/2} \sigma^c|_{g_2,g_3} \phih_{g_2,g_3} ( [A^s_{s(g_1)}] \oplus [T_{s(g_2)}M] \oplus [A^t_{s(g_3)}] ) \\
	& = |det(B)|^{1/2} \frac{ \lambda_G\left(\Sl_{g_2}([A^s_{s(g_1)}]\oplus [T_{s(g_2)}M])\right) \lambda_G\left(\Sr_{g_3}([T_{s(g_2)}M]\oplus [A^t_{s(g_3)}])\right) }{\mu([T_{s(g_2)}M])} \\
		& =  \frac{ \lambda_G\left(\Sl_{g_2}\left( ([A^s_{s(g_1)}]\oplus [T_{s(g_2)}M])\cdot B \right)\right) \lambda_G\left(\Sr_{g_3}([T_{s(g_2)}M]\oplus [A^t_{s(g_3)}])\right) }{\mu([T_{s(g_2)}M])} \\
& =  \frac{ \lambda_G\left(\Sr_{g_2}\left( [T_{s(g_1)}M] \oplus [A^t_{s(g_2)}] \right)\right) \lambda_G\left(\Sr_{g_3}([T_{s(g_2)}M]\oplus [A^t_{s(g_3)}])\right) }{\mu([T_{s(g_2)}M])}.
	\end{eqnarray*}
With this identity, the proof follows.
\end{proof}
\begin{remark} (Understanding by analogy the canonical solution)
	There is a heuristic structural way of understanding why formula \eqref{eq:defsigc} in the proof solves the associativity equation. In the spirit of Remark \ref{rmk:cocycl}, translating multiplicative cocycle conditions into additive cocycle conditions, the associativity condition for a $\sigma$ in its form \eqref{eq:assocsimpl} has the following analogous structure: find $\tilde f:G^{(2)}\to \C$ so that 
	$$ \delta \tilde f (g_1,g_2,g_3) = F(g_2g_3) - F(g_1g_2) $$
	for a given $F:G \to \C$. In the analogy, $\sigma$ plays the role of $\tilde f$ and the Liouville $\lambda_G$ the role of the given $F$ above. The interesting point is that 
	$ \tilde f(g_1,g_2) := F(g_1) + F(g_2) - h(s(g_1))$
	is always a solution of the above equation, for any $h:M \to \C$. Recalling that $F$ represents $\lambda_G$ in the analogy, taking $h$ to play the role of the half-density $\mu$ on $TM$, and switching from additive to multiplicative, we obtain precisely the structure of the solution $\sigma^c$ in \eqref{eq:defsigc}. 
\end{remark}

\titpar{Homological interpretation of the classification.}
Besides multiplicative cocycles recalled in Remark \ref{rmk:cocycl}, we also specialize the above result to a sub-class of enhancements which will be important in semi-classical limits of concrete star products, Section \ref{sec:app}.
\begin{definition}\label{def:exptype}
Consider a symplectic groupoid $(G\rra M,\omega)$ and a non-vanishing half-density $\mu$ in $M$ with associated canonical enhancement $\sigma^c$. We say that an enhancement $\sigma$ is of \textbf{exponential type} relative to $\mu$ when 
\[ \sigma = e^h \sigma^c, \ h:G\compo \to \C. \]
We say that two such exponential enhancements $\sigma_1, \ \sigma_2$ are \textbf{exp-equivalent} if they are equivalent through an automorphism with $\kappa = e^{\tilde h}$ in Definition \ref{def:equivenhanc}, for some $\tilde h: G \to \C$.
\end{definition}

We can then characterize the two sets of possible non-vanishing enhancements modulo equivalence.
\begin{corollary}\label{cor:H2}
 Consider a symplectic groupoid $(G\rra M,\omega)$. Each choice of non-vanishing half density $\mu$ on $M$ induces an identification
\[\{\text{equiv. classes }[\sigma]: \text{$\sigma$ non-vanishing enhancement of $(G,\omega)$ } \} \simeq  H^2(G,\mathbb{C^*}), \] 
 between the equivalence classes of Definition \ref{def:equivenhanc} and the second multiplicative differentiable cohomology group $H^2(G,\mathbb{C^*})$. Additionally, the set of exponential enhancements relative to $\mu$ modulo exp-equivalence is in bijection with the second additive cohomology group $H^2(G)$ of $G\rra M$.    
\end{corollary}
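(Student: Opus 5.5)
The plan is to reduce everything to Theorem \ref{thm:main} and Corollary \ref{cor:f}, which together identify associative enhancements with multiplicative $2$-cocycles. Fix $\mu\neq 0$ and its canonical enhancement $\sigma^c$. By Theorem \ref{thm:main}, every associative enhancement has the form $\sigma=f\sigma^c$ for a unique smooth $f:G\compo\to\C$ satisfying \eqref{eq:assocf}. Since $\sigma^c$ is nowhere vanishing, $\sigma$ is non-vanishing exactly when $f$ takes values in $\C^*$. By Remark \ref{rmk:cocycl}, such $f$ are exactly the smooth multiplicative $2$-cocycles, i.e. $\delta_{\C^*}f=1$. Conversely, for any smooth $\C^*$-valued $2$-cocycle $f$, the product $f\sigma^c$ is associative: substituting $f\sigma^c$ into the split form \eqref{eq:assocsimpl2} of Proposition \ref{prop:crit2} multiplies the two sides by $f(g_1g_2,g_3)f(g_1,g_2)$ and $f(g_2,g_3)f(g_1,g_2g_3)$ respectively. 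This gives a bijection $f\mapsto f\sigma^c$ between $Z^2(G,\C^*)$ and the non-vanishing associative enhancements.

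Next I match the equivalence relations. By Definition \ref{def:equivenhanc}, $\sigma'=f'\sigma^c$ is equivalent to $\sigma=f\sigma^c$ iff $f'=f\cdot \kappa(g_1)\kappa(g_2)\kappa(g_1g_2)^{-1}$ for some smooth $\kappa:G\to\C^*$. The factor $\kappa(g_1)\kappa(g_2)\kappa(g_1g_2)^{-1}$ is precisely the multiplicative coboundary $\delta_{\C^*}\kappa$ of the $1$-cochain $\kappa$, with the convention $\delta\kappa(g_1,g_2)=\kappa(g_2)\kappa(g_1g_2)^{-1}\kappa(g_1)$, which is consistent with the formula for $\delta_{\C^*}$ on $2$-cochains. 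Hence equivalence classes correspond exactly to $Z^2/B^2=H^2(G,\C^*)$. As a byproduct, this also justifies the remark that the class of associative enhancements is closed under equivalence.

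For the exponential statement, an exponential enhancement is $e^h\sigma^c$. It is associative iff $e^h$ satisfies \eqref{eq:assocf}, i.e. iff $e^{\delta h}=1$ pointwise, i.e. $\delta h\in 2\pi i\mathbb{Z}$. This is the step I expect to be the main obstacle: the correspondence requires $\delta h=0$, and I would handle it by viewing $h$ as determined only modulo $2\pi i\mathbb{Z}$-valued locally constant functions. Two routes are available. One is to normalize $h$ on identities $1\compo$, use connectedness of $G^{(3)}$ components through units to force the continuous integer-valued function $\delta h/2\pi i$ to vanish, and thereby adjust $h$ to a genuine additive cocycle. The other is simply to read "exponential enhancement" as carrying the choice of $h$ and take $h$ in $Z^2(G)$, as in Remark \ref{rmk:cocycl}.

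Granting this, $e^h\sigma^c\sim e^{h'}\sigma^c$ via $\kappa=e^{\tilde h}$ iff $h'=h+\tilde h(g_1)+\tilde h(g_2)-\tilde h(g_1g_2)=h+\delta\tilde h$, again modulo the same integer ambiguity. This gives the bijection with $Z^2(G)/B^2(G)=H^2(G)$.
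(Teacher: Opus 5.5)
Your argument for the $H^2(G,\C^*)$ statement follows the same route as the paper. The paper gives no separate proof and simply reads the corollary off Theorem \ref{thm:main}, Corollary \ref{cor:f}, Remark \ref{rmk:cocycl}, and the fact that the factor in Definition \ref{def:equivenhanc} is the coboundary $\delta_{\C^*}\kappa$. Your version of this part, including the converse check via \eqref{eq:assocsimpl2} and the matching of conventions, is complete and needs no connectivity assumption.

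The exponential part is not yet a proof. You are right that $e^{\delta h}=1$ only gives $\delta h\in 2\pi i\mathbb{Z}$; the paper's Remark \ref{rmk:cocycl} passes over this point. Your first route, however, assumes that every component of $G^{(3)}$ meets $1^{(3)}_M$. That holds when $G$ is source-connected, but it is not a hypothesis of the corollary. (In that case no normalization of $h$ is needed, since $\delta h(1_x,1_x,1_x)=0$ identically.)

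More importantly, your final step hides a \emph{different} integrality problem that neither route addresses. Suppose $h,h'\in Z^2(G)$ and $e^{h'}=e^{h+\delta\tilde h}$. Then $n=(h'-h-\delta\tilde h)/2\pi i$ is a locally constant $\mathbb{Z}$-valued $2$-cocycle. For $[h]\mapsto[e^h\sigma^c]$ to be injective (equivalently, for the inverse map to $H^2(G)$ to be well defined), you need $n\in B^2(G)$.

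\emph{Source-connected case.} Here the step closes. Moving $g_1$ in its $s$-fibre and then $g_2$ in its $t$-fibre connects $(g_1,g_2)$ to $(1_x,1_x)$ inside $G^{(2)}$, where $x=s(g_1)$. Hence $n(g_1,g_2)=\nu(s(g_1))$ for some locally constant $\nu$ on $M$, and $n=\delta(\nu\circ s)$. Replacing $\tilde h$ by $\tilde h+2\pi i\,\nu\circ s$ leaves $\kappa=e^{\tilde h}$ unchanged and gives $h'=h+\delta(\tilde h+2\pi i\,\nu\circ s)$ exactly.

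\emph{General case.} Without such a hypothesis the claim can fail. View the discrete group $\mathbb{Z}^2$ as a zero-dimensional symplectic groupoid over a point. Then $n((a_1,b_1),(a_2,b_2))=b_1a_2$ is an integer $2$-cocycle. It is nontrivial in $H^2(\mathbb{Z}^2,\C)$, because it is not symmetric while coboundaries on an abelian group are. Yet $h=0$ and $h'=2\pi i n$ define the same enhancement.

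So you must choose one of two fixes:
\begin{itemize}
\item assume $G$ is source-connected, in which case both steps close as above; or
\item declare an exponential enhancement to be the additive datum $h\in Z^2(G)$, with exp-equivalence meaning $h'=h+\delta\tilde h$ exactly, in which case the second statement is tautological.
\end{itemize}
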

Notice that the above corollary can also be seen as providing an interpretation for the cohomology group $H^2(G,\mathbb{C^*})$: for  a symplectic groupoid $(G\rra M,\omega)$, classes $[f]\in H^2(G,\mathbb{C^*})$ can be interpreted as providing \emph{non-vanishing associative deformations} $f\sigma^c$ of a canonical enhancement $\sigma^c$ modulo the equivalences of Definition \ref{def:equivenhanc}.

\begin{remark} \label{rmk:defo}(The underlying deformation class of $(M,\pi)$)
Recall the van Est map $v: H^k(G) \to H^k(A)$ from differentiable cohomology for $G\rra M$ to Lie algebroid cohomology for $A=Lie(G)$ (see \cite{Cra03},\cite{LB-M}, and also \S\ref{subsec:props} below). For a symplectic groupoid, $A\simeq T^*_\pi M$ is the cotangent Lie algebroid associated with the underlying Poisson manifold $(M,\pi)$. Then, given an exponential type enhancement $\sigma = e^h \sigma^c$, we thus obtain a Lie algebroid class
\[ v([h]) \in H^2(T^*_\pi M). \]
Such cohomology elements can be interpreted as classes $[\pi_1]$ of first order deformation parameters of the Poisson structure $\pi_t = \pi + t \pi_1 + O(t^2)$ modulo trivial ones. We then conclude that exponential enhancements modulo exp-equivalence define an underlying deformation class $[\pi_1]=v([h])$ for $(M,\pi)$. When $\sigma$ comes from a star product with underlying Kontsevich class $\h\pi_0 + \h^2 \pi_1 +\dots$ (see \cite{Kont}), it is expected that the class $v([h])$ above corresponds to the first correction term $\pi_1$, this will be explored elsewhere.
\end{remark}

\subsection{Properties and simple examples}
\label{subsec:props}
We first prove some immediate properties of enhanced symplectic groupoids and then provide a list of illustrative examples.

\titpar{Properties: non-vanishing and identity axiom.}
The first property concerns the non-vanishing property, for which we recall the notation $1\compo_x = (1_x,1_x)\in G\compo$ for $x\in M$.
\begin{lemma}\label{lem:sigmanonvanish}
Let $\sigma$ be an associative enhancement of a symplectic groupoid $(G\rra M,\omega)$. If $\sigma|_{1\compo_M}\neq 0$ and $G$ is source-connected, then $\sigma\neq 0$ is non-vanishing everywhere in $G\compo$.
\end{lemma}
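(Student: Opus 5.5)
Write $\sigma = f\sigma^c$, where $\sigma^c$ is the canonical enhancement of Theorem \ref{thm:main} associated with some auxiliary non-vanishing half-density $\mu$ on $M$. For instance, take $\mu$ the half-density of a Riemannian metric; it exists since $M$ is a manifold. Because $\sigma^c\neq 0$ everywhere, $f:G\compo\to\C$ is a well-defined smooth function. By Theorem \ref{thm:main} (equivalently Corollary \ref{cor:f}), $f$ satisfies the cocycle identity \eqref{eq:assocf}. The statement then reduces to a purely algebraic one about $f$: if $f(1_x,1_x)\neq 0$ for all $x\in M$ and $G$ is source-connected, then $f$ never vanishes on $G\compo$.

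First I extract the behaviour of $f$ on units. Take $(g_1,g_2,g_3)=(1_x,1_x,g)$ with $t(g)=x$ in \eqref{eq:assocf}. This gives $f(1_x,g)f(1_x,1_x)=f(1_x,g)f(1_x,g)$, so $f(1_x,g)\in\{0,f(1_x,1_x)\}$. Since $f$ is continuous and $f(1_x,1_x)\neq0$, the set $\{g: f(1_{t(g)},g)\neq 0\}$ is open. It is also closed in each $t$-fibre, because on it $f(1_{t(g)},g)=f(1_{t(g)},1_{t(g)})$, which stays bounded away from zero locally. Hence on each $t$-fibre through $1_x$ we get $f(1_{t(g)},g)=f(1_x,1_x)\neq0$. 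Source-connectedness is equivalent to target-connectedness via inversion, so this holds for all $g$. Symmetrically, $(g,1_y,1_y)$ with $s(g)=y$ gives $f(g,1_y)f(1_y,1_y)... $; more precisely, $f(g,1_y)f(g,1_y)=f(g,1_y)f(1_y,1_y)$, and the same argument on $s$-fibres gives $f(g,1_{s(g)})=f(1_{s(g)},1_{s(g)})\neq0$.

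Next I use the triple $(g,g^{-1},g)$ in \eqref{eq:assocf}: $f(1_{t(g)},g)f(g,g^{-1})=f(g,1_{s(g)})f(g^{-1},g)$. Both unit-type factors are nonzero by the previous step, so $f(g,g^{-1})\neq0$ if and only if $f(g^{-1},g)\neq0$. The triple $(g_1,g_2,g_2^{-1})$ gives $f(g_1g_2,g_2^{-1})f(g_1,g_2)=f(g_1,1)f(g_2,g_2^{-1})$. Since the right-hand side is then nonzero whenever $f(g_2,g_2^{-1})\neq0$, I get $f(g_1,g_2)\neq0$ under that condition. So it suffices to show $f(g,g^{-1})\neq0$ for all $g$. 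For that, I consider $E=\{g\in s^{-1}(x): f(g,g^{-1})\neq0\}$, which is open and contains $1_x$. I expect proving that $E$ is also closed to be the main obstacle.

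My first attempt at closedness is to find a closed-form relation of the type $f(g,g^{-1})=c(g)\,f(1,1)$ with $c$ nonzero. The relations so far only express $f(g,g^{-1})$ in terms of other values of $f$ that are not known to be nonzero, so they do not give this directly. The more robust route is a local-to-global argument. By continuity, $f\neq0$ on a neighbourhood $U\compo$ of the units. Near a point $g_0$ of the boundary of $E$, I write $g=hk$ with $k$ close to a unit and $h\in E$. Then I apply the triple identity to $(h,k,k^{-1}h^{-1})$ and to $(k,k^{-1},h^{-1})$. Each factor involving $k$ near a unit is nonzero, so nonvanishing propagates from $h$ to $g=hk$. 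Because $s$-fibres are connected, every element is a finite product of elements near the units. Iterating this propagation along such products therefore shows $E$ is all of $s^{-1}(x)$, and hence $f\neq0$ on all of $G\compo$. The delicate point is organizing these triples so that every factor other than the target one is already known to be nonzero.
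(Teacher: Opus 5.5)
Your route is valid and genuinely different from the paper's. The step you flag as delicate closes cleanly with exactly the two triples you name.

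From $(k,k^{-1},h^{-1})$ you get $f(1_{t(k)},h^{-1})f(k,k^{-1})=f(k,k^{-1}h^{-1})f(k^{-1},h^{-1})$. Its left side is nonzero as soon as $f(k,k^{-1})\neq 0$, by your unit step, and this forces $f(k,k^{-1}h^{-1})\neq 0$. Then $(h,k,k^{-1}h^{-1})$ gives $f(hk,(hk)^{-1})f(h,k)=f(h,h^{-1})f(k,k^{-1}h^{-1})\neq 0$. Hence $E=\{g\in G: f(g,g^{-1})\neq 0\}$ has three properties:
\begin{itemize}
\item it is closed under composition, by a purely algebraic argument;
\item it is closed under inversion, by your $(g,g^{-1},g)$ identity;
\item it contains a neighbourhood of $1_M$, by continuity.
\end{itemize}
Since an $s$-connected Lie groupoid is generated by any neighbourhood of the units, $E=G$.

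Two corrections to your write-up. First, $E$ should live on all of $G$, not in $s^{-1}(x)$: if $g=hk$ with $s(g)=x$, then $h$ lies in $t^{-1}(t(g))$, not in $s^{-1}(x)$. Second, the non-vanishing of $f(k,k^{-1}h^{-1})$ must come from the identity above, not from ``$k$ is near a unit''. Continuity alone only yields an $h$-dependent neighbourhood, which would break the iteration along a fixed factorization $g=k_1\cdots k_m$.

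The paper avoids inverses. It fixes $g_2\in t^{-1}(x)$ and shows that the zero set of $g_1\mapsto f(g_1,g_2)$ in $s^{-1}(x)$ is open and closed. This uses only the triple $(g_1',g_1,g_2)$ with $g_1'$ near $1_{t(g_1)}$, together with continuity of $f$ at $(1_{t(g_1)},g_1)$ and $(1_{t(g_1)},g_1g_2)$. In a clopen argument such point-dependent neighbourhoods are harmless, so the proof is shorter and uses fibre connectedness directly. Your version instead reduces to the diagonal values $f(g,g^{-1})$ via $(g_1,g_2,g_2^{-1})$ and proves an algebraic subgroupoid statement. That way continuity is only needed near the units, at the cost of invoking the standard generation lemma for $s$-connected groupoids.
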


\begin{proof}
Using Corollary \ref{cor:f} and Theorem \ref{thm:main}, we can write $\sigma=f \sigma^c$ with $f:G\compo \to \C$ satisfying \eqref{eq:assocf}. In this setting, we want to show that the condition
\[ (a): f(1_x,1_x)\neq 0,\ \forall x\in M\] 
implies $f(g_1,g_2)\neq 0$ for any $(g_1,g_2)\in G\compo$.

Let $x\in M$ and $g\in G$ with $t(g)=x$. The associativity condition \eqref{eq:assocf} for $f$, with $g_1=g_2=1_x$ and $g_3=g$, implies
$f(1_x,1_x) f(1_x,g) = f(1_x,g) f(1_x,g)$.
Thus, $f(1_x,g)$ can only take the values $f(1_x,1_x)$ or zero. Since for $g=1_x$ we have $f(1_x,1_x)\neq 0$ by the hypothesis (a), and since $G$ is $s$-connected and $f$ is continuous, we conclude
\[(b): f(1_x, g) = f(1_x,1_x), \forall g\in t^{-1}(x). \]
Analogously, $f(g,1_x)=f(1_x,1_x) \forall g \in s^{-1}(x)$.
Finally, we fix $g_2$ in $t^{-1}(x)$ and vary $g_1\in s^{-1}(x)$. We want to show that the set $V_{g_2} \subset s^{-1}(x)$ defined by $f(g_1,g_2)=0$ is both closed and open. Since the source fiber is connected and $f(1_x,g_2)=f(1_x,1_x)\neq 0$ by (a,b) above, we shall conclude that $V_{g_1}$ is empty, thus concluding the proof.
The fact that $V_{g_2}$ is closed is obvious since $f$ is continuous. To show that it is also open we observe that, when $g_1'\in s^{-1}(x'), \ x'=t(g_1)$, is close enough to $1_{x'}$, by continuity w.r.t. (a,b) above we get $f(g_1',g_1)\neq 0$ and $f(g_1',g_1g_2)\neq 0$. Lastly, using the associativity condition \eqref{eq:assocf} for $f$ we get
\[f(g_1',g_1)f(g_1'g_1,g_2) = f(g_1', g_1g_2) f(g_1,g_2) \]
so that $f(g_1,g_2)=0$ iff $f(g_1'g_1,g_2)=0$ for every $g_1' \in s^{-1}(x')$ close enough to $1_{x'}$. This finishes the proof. 
\end{proof}

The second property concerns an underlying \emph{identity axiom} for an enhanced symplectic groupoid $(G\rra M,\omega,\sigma)$. 
Given a half-density $\mu$ on $M$, we say that the morphism 
$(1_M, \mu):\ast \dto (G,\omega)$
\textbf{satisfies the identity axiom} for $(G\rra M,\omega,\sigma)$ if the following compositions yield the identity morphism $\id_G=(\gr(id_G),\lambda_G)$ on $(G,\omega)$:
\begin{eqnarray}
G\times \ast \overset{\id_G\times (1_M,\mu)}{\dto} G\times G \overset{(\gr(m),\sigma)}{\dto} G, \ \ (\gr(m),\sigma)\circ(id\times (1_M,\mu))=\id_G \nonumber \\  
 \ast \times G \overset{ (1_M,\mu) \times \id_G}{\dto} G\times G \overset{(\gr(m),\sigma)}{\dto} G, \ \ (\gr(m),\sigma)\circ((1_M,\mu)\times id)=\id_G.\label{eq:idaxiom}
\end{eqnarray}
Note that $G\times \ast = G = \ast \times G$ and, thus, it makes sense to describe the Lagrangian $\gr(id_G)$ as a canonical relations $G\times \ast = \ast \times G \dto G$.

We want to show that, when $\sigma$ is non-vanishing, there is always such a $\mu$ satisfying the identity axiom. Let us consider $\sigma \neq 0$ and the induced half-density $\mu_\sigma$ on $M$ through the exact sequence \eqref{eq:shortseq} with $g_1=1_x=g_2, \ x\in M$,
$$\mu_\sigma|_x = (\lambda_G|_{1_x} \times \lambda_G|_{1_x})/\sigma|_{(1_x,1_x)}.$$
Note that it satisfies the scaling property $\mu_{f\sigma} = (1/f|_{1\compo_M}) \mu_\sigma$ for $f:G\compo\to \C$ non-vanishing. Also notice that, if $\sigma^c$ is a canonical enhancement associated with a given $\mu\neq 0$ on $M$ as in Definition \ref{def:canonsigma}, then the induced half-density recovers $\mu$,  $\mu_{\sigma^c} = \mu$.
We can now prove the identity axiom for $\mu_\sigma$.
\begin{proposition}\label{prop:idaxiom}
Let $(G\rra M,\omega,\sigma)$ be an enhanced symplectic groupoid with non-vanishing $\sigma$, and let $\mu_\sigma$ be the half-density on $M$ defined above. Then, the enhanced morphism $(1_M,\mu_\sigma):\ast \dto (G,\omega)$
satisfies the identity axiom of eq. \eqref{eq:idaxiom}.
\end{proposition}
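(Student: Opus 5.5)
The plan is to reduce to the canonical enhancement, verify the identity axiom for it by a direct tangent-space computation, and then transfer it back to $\sigma$ using the cocycle relations of $f$ at units.

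\emph{Reduction.} Since $\sigma\neq 0$, set $\mu:=\mu_\sigma$ and let $\sigma^c$ be the associated canonical enhancement (Definition \ref{def:canonsigma}). By Theorem \ref{thm:main}, $\sigma=f\sigma^c$ with $f$ satisfying \eqref{eq:assocf}, and $f$ is non-vanishing. The scaling property gives $\mu_\sigma=\mu_{f\sigma^c}=(1/f|_{1\compo_M})\mu_{\sigma^c}=(1/f|_{1\compo_M})\mu$. Together with $\mu_\sigma=\mu$, this yields $f(1_x,1_x)=1$ for all $x$.

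Next, \eqref{eq:assocf} with $(g_1,g_2,g_3)=(1_x,1_x,g)$ gives $f(1_x,1_x)f(1_x,g)=f(1_x,g)f(1_x,g)$, hence $f(1_x,g)=1$ because $f\neq0$. Similarly $f(g,1_{s(g)})=1$. No connectedness is needed here, since $f$ is nonvanishing.

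The compositions in \eqref{eq:idaxiom} only evaluate $\sigma$ on $G\compo\cap(G\times 1_M)$ and $G\compo\cap(1_M\times G)$, where $f\equiv1$. So $\sigma$ and $\sigma^c$ give the same compositions, and it suffices to prove the identity axiom for $\sigma^c$ with $\mu$.

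\emph{Computation for $\sigma^c$.} I treat the first composition; the second is symmetric via $\Sl$ instead of $\Sr$. The composition $\gr(m)\circ(\gr(id_G)\times 1_M)$ is transverse, and $(L_2\ast L_1)\simeq\{(g,1_{s(g)})\}\simeq G$. I apply \eqref{eq:compofs} in its point-space variant, or directly Remark \ref{rmk:linearcompoformula}, at $x=g$.

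I choose bases as follows:
\begin{itemize}
\item $T_gG=\Sl_g([A^s_{t(g)}]\oplus[T_{s(g)}M])$;
\item on $\gr(id_G)\times 1_M$, the basis $\Delta[T_gG]\times[T_{s(g)}M]$;
\item the tangent space to $L_2\ast L_1$ is the graph of $v\mapsto(v,DT1\,Ds(v))$. Its image under $\phih_{g,1}$ corresponds to $(k^s,v,0)$, since $\hr$ reduces to $T1$ at identities;
\item the complement $C$ is spanned by $(0,0,\tilde k^t)$ directions in $A^t_{s(g)}$, together with the transverse directions of $G\times G$ in the ambient space.
\end{itemize}

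Using \eqref{eq:defsigc} at $(g,1_{s(g)})$, the value of $\sigma^c$ is
\[\lambda_G(\Sl_g([A^s]\oplus[TM]))\,\lambda_G(\Sr_{1}([TM]\oplus[A^t]))/\mu([TM]).\]
The factor $\mu([TM])$ cancels against the $\mu$ coming from $(1_M,\mu)$. The factor $\lambda_G(\Sr_1([TM]\oplus[A^t]))$ cancels against the Liouville normalization $\lambda_{G}$ at $1_{s(g)}$ in the denominator. What remains is $\lambda_G$ evaluated on $[T_gG]$, i.e.\ the identity morphism $\id_G$.

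\emph{Main obstacle.} The delicate step is the bookkeeping of the denominator in \eqref{eq:compofs}. The intermediate space is $G\times G$ with Liouville $\lambda_G\times\lambda_G$, and the intermediate $G\times G$ contributes $\lambda_G(T_gG)$ from the identity factor. I need to check that the change-of-basis determinants are $1$, as in the proof of Proposition \ref{prop:crit2}, using that $1_M$ is Lagrangian and transverse to $A^t$ at units. Concretely, I would write $T_{1_x}G=T1(T_xM)\oplus A^t_x$ and check that the $C$-directions map under $D\alpha$ exactly onto $A^t_x$. With that, the leftover ratio is $\lambda_G(\Sr_{1_x}([T_xM]\oplus[A^t_x]))$ divided by itself.
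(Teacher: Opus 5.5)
Your proposal is correct and follows essentially the paper's route. You reduce to the canonical enhancement via the unit relations $f(g,1_{s(g)})=f(1_{s(g)},1_{s(g)})$ and $f(1_{t(g)},g)=f(1_{t(g)},1_{t(g)})$, forced by \eqref{eq:assocf} and non-vanishing, and then verify the identity axiom for $\sigma^c$ directly. Taking $\mu=\mu_\sigma$ as reference, so that $f|_{1\compo_M}=1$, is only a normalization of the paper's factor $f(g,1_x)/f(1_x,1_x)$. Your canonical-case sketch, using the basis $\phih_{g,1_x}([A^s_{t(g)}]\oplus[T_xM]\oplus[A^t_x])$ and cancelling $\mu$ and $\lambda_G(\Sr_{1_x}(\cdot))$, supplies the computation the paper leaves to the reader. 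One small correction: the complement directions should be tracked under $\tau$, not $D\alpha$.
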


\begin{proof}
Let us first assume that there is a non-vanishing associative enhancement $\sigma_0$ of $(G\rra M, \omega)$ such that $\mu_{\sigma_0}$ satisfies the identity axiom on $(G,\omega,\sigma_0)$ and show that, for any other associative $\sigma = f\sigma_0$ with $f:G\compo \to \C$ non-vanishing, the induced $\mu_\sigma$ also satisfies the identity axiom on $(G,\omega,\sigma)$. After this, we shall show that a canonical enhancement $\sigma_0=\sigma^c$ has the above property, thus completing the proof.

For $\sigma=f\sigma_0$ an associative enhancement as above, we have for $g\in G$ with $s(g)=x$,
\[ (\sigma \circ (\lambda_G\times \mu_\sigma) )|_g = \frac{f(g,1_{x})}{f(1_{x},1_{x})} (\sigma_0 \circ (\lambda_G \times \mu_{\sigma_0}) )|_g = \frac{f(g,1_{x})}{f(1_{x},1_{x})} \lambda_G|_g,\]
where we have used the scaling property for $\mu_{f\sigma}$ mentioned above the Proposition and the hypothesis on $\sigma_0$. Similarly to the argument given in the proof of Lemma \ref{lem:sigmanonvanish}, the associativity condition \eqref{eq:assocf} for $f$ implies that $f(g,1_x)$ can only be $f(1_x,1_x)$ or zero. The non-vanishing condition on $f$ then implies that $f(g,1_x)=f(1_x,1_x)$ so that the above composition yields $\lambda_G$. The case of $\sigma \circ (\mu_\sigma\times \lambda_G)=\lambda_G$ is similar, thus showing that, in this case, $\mu_\sigma$ satisfies the identity axiom on $(G,\omega,\sigma)$.

We are thus left with showing 
that, for any half-density $\mu\neq 0$ on $M$, the associated canonical enhancement $\sigma_0=\sigma^c$ of Definition \ref{def:canonsigma} is such that $\mu_{\sigma^c}=\mu$ satisfies the identity axiom. This follows directly from the composition formula in Remark \ref{rmk:linearcompoformula}, the details are left to the reader.
\end{proof}


\begin{remark} (An identitiy involving the inverse) Consider an enhancement of the form $\sigma = f \sigma^c$ so that $f$ satisfies the associativity condition \eqref{eq:assocf}. Using the arguments in the proof of Lemma \ref{lem:sigmanonvanish}, it follows that, assuming $f|_{1\compo_M}\neq 0$,
\[ f(g,g^{-1}) = \frac{f(1_{s(g)},1_{s(g)})}{f(1_{t(g)},1_{t(g)})} f(g^{-1},g). \]
This identity makes its appearance in the study of non-formal star products (see \cite{CFer2}).
\end{remark}

\titpar{Examples.} 
Next, we illustrate the definition and the results of this chapter in concrete examples. We shall be using a well-known cohomological result that we now recall. Given a Lie groupoid $G\rra M$ with Lie algebroid $A\to M$, there is an induced \emph{van Est map} from differentiable cohomology to Lie algebroid cohomology,
\begin{equation}\label{eq:vanEst}
v: H^k(G) \to H^k(A).
\end{equation}
The corresponding \emph{van Est theorem} (\cite{Cra03}, see also \cite{LB-M}) says that this map is an isomorphism for all $k\leq n$ when the source fibers of $G$ are $n$-connected. We also recall that, for a symplectic groupoid $(G\rra M,\omega)$, there is a natural isomorphism $A\simeq T^*_\pi M$ to the cotangent algebroid of the underlying Poisson $(M,\pi)$ (see \cite{Macbook}).

The first example shows that enhancements can be non-unique and non-trivial even for the simplest symplectic groupoid, which also plays a role in Section \ref{sec:app} as the point around which (formal) deformations are taken within Kontsevich's quantization formalism.

\begin{example}\label{ex:pi=0} (Enhanced groupoids for $\pi=0$)
Let then $M$ any manifold endowed with the trivial Poisson structure, $\pi=0$. The corresponding source 1-connected symplectic groupoid $(G\rra M, \omega)$ is given by
\[ G=T^*M, \ s=t=q: T^*M \to M\text{ bundle projection}, \ \omega=\omega_c \text{ canonical,} \]
where we denote $g=\alpha \in T^*M$
and the groupoid multiplication is
$m(\alpha_1,\alpha_2) = \alpha_1 + \alpha_2, \ \alpha_1,\alpha_2 \in T^*_x M$.
Given $\mu$ a non-vanishing half-density on $M$, the corresponding canonical enhancement $\sigma^c$ can be described as follows. 
Using a linear connection on $T^*M\to M$, we can split any tangent space into horizontal and vertical parts
$\phi: q^*(TM)\times_{T^*M} q^*(T^*M) \to T(T^*M)=TG$
so that, for each $x\in M$, $\phi^*\lambda_{T^*_xM}=\mu_x \otimes \tilde\mu_x$ with $\tilde \mu$ the dual of $\mu$. In canonical coordinates $(x^i,p_j)$ for $T^*M$, if we take $\mu=|dx|^{1/2}$ then $\tilde \mu=|dp|^{1/2}$. The identification $\phi$ also induces
$$\hat \phi: T_x M \times T_x^*M \times T_x^* M \simeq T_{(\alpha_1,\alpha_2)}(T^*M\times_M T^*M)=T_{g_1,g_2}G\compo $$
so that, from the description of $\sigma^c$ in Definition \ref{def:canonsigma}, we get $\hat\phi^*\sigma^c = \mu \otimes \tilde \mu \otimes \tilde \mu$. If we take $\sigma = f \sigma^c$ for $f:T^*M\times_M T^*M\to \C$, we get by direct computation
\[ \sigma\circ(\sigma \times \lambda_G)|_{(g_1,g_2,g_3)} \simeq f(\alpha_1,\alpha_2)f(\alpha_1+\alpha_2,\alpha_3) \ \mu \otimes \tilde \mu \otimes \tilde \mu \otimes \tilde \mu, \]
\[ \sigma\circ(\lambda_G \times \sigma)|_{(g_1,g_2,g_3)} \simeq f(\alpha_2,\alpha_3)f(\alpha_1,\alpha_2+\alpha_3) \ \mu \otimes \tilde \mu \otimes \tilde \mu \otimes \tilde \mu, \]
using the induced $TM\times_M T^*M\times_M T^*M\times_M T^*M\simeq TG^{(3)}$.
This recovers a specialization of eq. \eqref{eq:assocf} for $f$,
\[f:T^*M\times_M T^*M \to \C, \ f(\alpha_1,\alpha_2)f(\alpha_1+\alpha_2,\alpha_3)= f(\alpha_2,\alpha_3)f(\alpha_1,\alpha_2+\alpha_3). \]
Constants $f=c$ solve this equation but there are also non-constant solutions e.g. $f=e^{h}$ for any additive $2$-cocycle $h:G\compo=T^*M\times_M T^*M \to \C$, $\delta h=0$. Using the van Est isomorphism, the exp-equivalence classes of such solutions are in bijections with bivectors on $M$, $v([h])\in H^2(A)\simeq \mathfrak{X}^2(M)$ (since $d=0$ on $A=T^*_{\pi=0}M$ in this case). We thus see that, even in this simple case, we have an infinite dimensional space of non-equivalent associative enhancements.
\end{example}

\begin{example}\label{ex:symplectic} (Enhanced groupoids for $\pi=\omega_M^{-1}$ symplectic)
Let $(M,\omega_M)$ be a symplectic manifold of dimension $dim(M)=2d$, seen as a Poisson manifold with non-degenerate $\pi=\omega_M^{-1}$.
We consider the symplectic groupoid $(G=M\times M \rra M,\omega)$ given by the pair groupoid $M\times M\rra M$ endowed with $\omega = p_1^*\omega_M - p_2^*\omega_M$, where the target is $p_1(x,y)=x$ and the source is $p_2(x,y)=y$. It follows that the Liouville half-density on $G=M\times M$ is $\lambda_G = \lambda_M \otimes \lambda_M$ (up to constant).
In this case, we have a natural non-vanishing half-density  on $M$, $\mu=\lambda_M$. The corresponding canonical enhancement is
\[ \sigma^c = \lambda_M \otimes \lambda_M \otimes \lambda_M\text{ on } G\compo=M^3. \]
Any other associative enhancement will be of the form $\sigma = f \sigma^c$ for $f:G\compo=M^3 \to \C$
satisfying
\[ f(x_1,x_2,x_3) f(x_1,x_3,x_4) = f(x_2,x_3,x_4) f(x_1,x_2,x_4). \]
We remark that, as in the proof of Lemma \ref{lem:sigmanonvanish}, it follows that $f(x,y,y)$ (resp. $f(x,x,y)$) can only take the values zero or $f(y,y,y)$ (resp. $f(x,x,x)$). Finally, let us focus on exponential enhancements $f=e^h$ with $h:M^3 \to \C$ (Definition \ref{def:exptype}), whose exp-equivalence classes correspond to 2-cocycles $[h]\in H^2(G)$ for the pair groupoid. When $M$ is $2$-connected, using the van Est map \eqref{eq:vanEst}, they are then in bijection with ordinary de Rham cohomology classes
\[ [v(h)] \in H^2_{dR}(M), \]
since $A\simeq TM$ the tangent algebroid in this case.
Using Remark \ref{rmk:coverings}, an analogous description also holds for the source 1-connected symplectic groupoid $(\Pi_1(M)\rra M, \omega')$ given by the fundamental groupoid.
We also observe that non-canonical enhancements $\sigma\neq \sigma^c$ appear as semiclassical limits of integral quantization formulas for $(M,\omega_M)$ from Jacobian-type factors, see e.g. \cite[eq. (16)]{Biel} and \cite[eq. (5.16)]{KarOsb}. These yield another source of non-trivial examples of enhanced symplectic groupoids.
\end{example}

We observe that when $M = \R^n$ and $\pi$ is constant, we can do a global Weinstein splitting $M\simeq M_1 \times M_2$ with $M_1\simeq Im(\pi^\sharp)$ constant symplectic and $M_2$ endowed with $\pi_2=0$. The corresponding enhanced symplectic groupoids are products of the ones described in the previous two examples.

The final example will be relevant in the study of star products in Section \ref{sec:app}.
\begin{example}\label{ex:pilinear} (Enhanced groupoids for $\pi$ linear)
Let $\g$ be a (finite dimensional, real) Lie algebra with bracket $[,]$, $dim(\g)=n$, and consider $M=\g^*$ endowed with the linear Poisson structure given by $\pi^{ij}(x) = -c^{ij}_k x^k$, where $e^i$ defines a basis of $\g$ with dual basis $e_i$ of $\g^*$ with respect to which $x^i$ are linear coordinates and $[e^i,e^j]=c^{ij}_k e^k$. (The minus sign in $\pi$ is conventional.)

For any Lie group $\G$ integrating $\g$, it is known (see e.g. \cite{CDW,Macbook}) that the cotangent bundle inherits a natural "cotangent lift" Lie groupoid structure $T^*\G \rra \g^*$ such that $(T^*\G\rra \g^*, \omega_c)$ integrates $(M=\g^*,\pi)$. This will be described further in Section \ref{sec:app}. Moreover, left translations on $\G$ induce an isomorphism $(T^*\G\rra \g^*,\omega_c)\simeq (G_\ltimes=\G\ltimes \g^*\rra \g^*,\omega)$ onto the action groupoid associated with the coadjoint action of $\G$ on $\g^*$ and endowed with 
\[\omega =d(\langle x, \theta\rangle) =\langle dx \overset{\wedge}{,} \theta\rangle - \frac{1}{2} \langle x, [\theta\overset{\wedge}{,}\theta]\rangle ,\]
where $\langle,\rangle$ denotes the pairing between $\g^*$ and $\g$, and $\theta:T\G\to \g$ is the left-invariant Maurer-Cartan form. The structure maps in $G_\ltimes$ are
\[ s(g,x) = x, \ t(g,x)= Ad^*_g x, \ 1_x = (e,x),
m((g_1,x_1)(g_2,x_2)) = (g_1g_2, x_2). \]
Let us consider $\mu=|dx|^{1/2}$ the Euclidean half-density on $\g^*$ with dual $\tilde \mu=|dp|^{1/2}$ on $\g$. Using $G_\ltimes\compo\simeq \G\times\G\times \g^*$, the corresponding canonical enhancement of $G_\ltimes$ is 
\[\sigma^c = \tilde\mu_L \otimes \tilde\mu_L \otimes \mu,\text{ for }\tilde \mu_L|_g := L_{g^{-1}}^*\tilde \mu. \]
Other enhancements $\sigma=f\sigma^c$ are determined by
\[f: \G\times \G \times \g^* \to \C: \ f(g_1,g_2,Ad^*_{g_3}x_3)f(g_1g_2,g_3,x_3)=f(g_2,g_3,x_3)f(g_1,g_2g_3,x_3). \]
Focusing on exponential type enhancements $f=e^h$ modulo exp-equivalence, we obtain classes $[h]\in H^2(G_\ltimes)$. When $\G$ is 1-connected (so that it is also $2$-connected for being a Lie group), the van Est map establishes a bijection with
\[ [v(h)]\in H^2(\g,C^\infty(\g^*)), \]
namely, the second Lie algebra cohomology group for $\g$ with values in the $ad^*$-module $C^\infty(\g^*)$. This follows since the algebroid is $A=\g\ltimes \g^*$ in this case.
Note that scalar Lie algebra 2-cocycles $\Lambda ^2 \g \to \C$ can be seen as particular solutions with values in constant functions in $C^\infty(\g^*)$.
\end{example}

\section{Application: complete semiclassical factors in Kontsevich's star product}
\label{sec:app}
In this section, we go back to the starting motivation and apply the general theory to the study of star products. We focus on {\bf coordinate Poisson manifolds}, namely, $M\simeq \R^n$ endowed with an arbitrary Poisson structure $\pi=\frac{1}{2}\pi^{ij}(x) \partial_{x^i}\wedge \partial_{x^j}$. First, in \S\ref{subsec:coordstar} we describe the possible enhancements of the underlying \emph{local} symplectic groupoid of \cite{C22} through explicit formulas. Second, in \S\ref{subsec:Khalf} we show the main result of this section stating that Kontsevich's enhancement is equivalent to the canonical one (Theorem \ref{thm:Kenhancement}). Finally, in \S\ref{subsec:linear} we apply the theory to a linear Poisson structure leading to the special factors behind the Duflo isomorphism (Proposition \ref{prop:gastsigmaK}).

\subsection{Star products and enhanced symplectic groupoids for coordinate Poisson manifolds}\label{subsec:coordstar}

%
Within this subsection, we shall be working with \emph{local symplectic groupoid structures} defined on neighborhoods $G\subset T^*M$ of $0_M$, see \cite{C22} for a detailed relevant setting and their relation to star products (see also \cite{CatDheWei}). Enhanced \emph{local} symplectic groupoids are defined analogously to Definition \ref{def:main} and they enjoy the same properties on the relevant domains: for example, the associativity condition needs to hold only on a neighborhood $U_A \subset G^{(3)}$ of $1^{(3)}_M$. Since most of the considerations involving enhancements are pointwise, the proof of the relevant results carries naturally onto the local-groupoid case.

\titpar{From a $\star$-product to $(S,a_0)$ and associativity conditions}
Following the general description of Fourier Integral Operators (FIO) in \cite{GSbook,Mein}, let us consider, as a general motivation, star products on $M$ given as (see also \cite{CatDheWei,CFer1,CFer2})
\begin{equation}\label{eq:FIOstar}
f_1\star_h f_2|_{x_3} = (2\pi \h)^{-2n} \int_{x_1,x_2,p_1,p_2}f(x_1)f(x_2)a_\h(p_1,p_2,x_3)e^{\fih\left(-p_1x_1-p_2x_2 + S(p_1,p_2,x_3)\right)}
\end{equation}
where $a_\h = \sum_{n\geq 0} \h^n a_n$.
(See \cite{C22,CFer2,CatDheWei} for more details.) A key point is that computing $e^{\fih p_1} \star_\h e^{\fih p_2}$ we formally obtain the type of expansion \eqref{eq:kontstarfactors} recalled in the Introduction.
The associativity condition (S3) for such a $\star_h$ formally implies through a stationary phase approximation the following identities:
\begin{itemize}
\item (\cite{CatDheFel}) the \emph{generating function} $S\equiv S(p_1,p_2,x)$ satisfies the \emph{Symplectic groupoid associativity equation (SGA equation)},
\begin{equation}\label{eq:SGA}
S(p_1,p_2,\bar{x})+S(\bar{p},p_3,x)-\bar{x}\bar{p}=S(p_2,p_3,\Tilde{x})+S(p_1,\Tilde{p},x)-\Tilde{p}\Tilde{x}
\end{equation}
where $\bar{x}=\nabla_{p_1}S(\bar{p},p_3,x)$, $\bar{p}=\nabla_{x}S(p_1,p_2,\bar{x})$,
$\Tilde{x}=\nabla_{p_2}S(p_1,\Tilde{p},x)$ and $\Tilde{p}=\nabla_{x}S(p_2,p_3,\Tilde{x})$;

\item the \emph{leading symbol} $a_0\equiv a_0(p_1,p_2,x)$ satisfies
\begin{eqnarray}\nonumber
    a_0(p_1,p_2,\bar{x})a_0(\bar{p},p_3,x)\left|det(I - \nabla^2_xS_{(p_1,p_2,\bar{x})}\nabla^2_{p_1}S_{(\bar{p},p_3,x)})\right|^{-1/2}= \\
=a_0(p_2,p_3,\Tilde{x})a_0(p_1,\Tilde{p},x)\left|det(I - \nabla^2_xS_{(p_2,p_3,\Tilde{x})}\nabla^2_{p_2}S_{(p_1,\Tilde{p},x)})\right|^{-1/2} \label{eq:Ea0}
\end{eqnarray}   
\end{itemize}

Following \cite{GSbook,Mein} further, the data $(S,a_0)$ can be understood geometrically as an enhanced canonical relation $(L,\sigma)$, as follows. The canonical relation $L:T^*M\times T^*M \dto T^*M$ is (up to sign change in the domain's momenta) the Lagrangian generated by the function 
$$ \phi = -x_1p_1-x_2p_2+S(p_1,p_2,x_3)$$
when reduced along the projection $M^3 \times (M^*)^2 \to M^3, (x_1,x_2,x_3,p_1,p_2)\mapsto (x_1,x_2,x_3)$, see \cite[\S 5]{GSbook}. The general idea is that $L=\gr(m)$ defines the graph of multiplication on an underlying local groupoid structure $G$ on $T^*M$ (see the general theory in \cite{CFer2}). In this context, we say that $S$ is a {\bf (coordinate) generating function} for $G$, see \cite{C22}. The half-density $\sigma=\sigma^{a_0}$ on $L$ is defined by $a_0$ following the general prescription of \cite[\S 8.5]{GSbook} (see also \cite{Mein}). We shall see below the corresponding explicit formula specialized to our case of interest.

\begin{remark}(Non-vanishing)
The axiom (S1) for $\star$ implies that $a_0(x,0,0)\neq 0$. This translates into $\sigma|_{1\compo_M}\neq 0$ on the underlying local groupoid $G$. We can thus assume that $\sigma$ is non-vanishing when considering the germ of $G$ around the units.
\end{remark}

\titpar{A construction of $G_\pi$ and its generating $S_\pi$}
Given any $\pi$ on $M\simeq \R^n$, we shall recall from \cite[\S 3.3, \S 3.4]{C22} the construction of a local symplectic groupoid structure $G_\pi$ on $T^*M=M\times M^*$ and of a corresponding generating function $S_\pi$. The motivation is that, following \cite{C22} further, $S_\pi$ yields the factor $S_K$ to be used in Section \ref{subsec:Khalf}  upon asymptotic expansion.

We follow the conventions of \cite{CabMarSal1} for local Lie groupoids in which each structure map has a domain of definition and each axiom has a domain where is holds. The structure of $G_\pi$ lives on arrows $g=(x,p)\in T^*M=M\times M^*$ which are "small" in the sense $p\sim 0\in M^*$. The symplectic structure is the canonical one and the identities are given by the zero section,
\[ \omega = \omega_c, \ 1_x = (x,0)\in T^*M .\]
The source map $s: U_s \subset T^*M \to M$
is defined on a neighborhood of the zero section via the implicit relation,
\[ Q(s(x,p),p)=x\ \forall p\sim 0, \text{ with } Q(\tx,p):=\int_0^1 \varphi^p_u (\tx) \ du,\]
where $\varphi^p_u: M \to M$ is the time-$u$ flow of the "flat Poisson spray" equation for $x(u)\in M$, $\dot x^i = \pi^{ij}(x) p_j$
with $p\in M^*$ seen as a fixed parameter. The above equation indeed defines a smooth map $(x,p)\mapsto s(x,p)$ via the implicit function theorem, using $s(x,0)=x$. Moreover, the fact that this $s$ defines a symplectic realization $s:(U_s,\omega_c)\to (M,\pi)$ goes back to Karasev, \cite{Karasev}. The inverse map is $inv(x,p)=(x,-p)$ and, thus, the target is $t(x,p)=s(x,-p)$. We analogously have the relation
\[ \tilde Q(t(x,p),p) = x, \forall p\sim 0\text{ with } \tilde Q(\tx,p)=Q(\tx,-p). \]

The rest of the local groupoid structure can be determined by the (strict) symplectic realization data $(U_s,\omega_c,1,s)$, as recalled in \cite[\S 2.2]{C22} from \cite{CDW}. Moreover,
%
%
%
%
following \cite[\S 3.2 and Thm. 3.29]{C22}, we notice that all the structure maps of $G_\pi\rra M$ can be encoded into a single {\bf canonical generating  function}
\[ S\equiv S_\pi : U_S \subset  M^* \times M^* \times M \to \R, (p_1,p_2,x) \mapsto S(p_1,p_2,x) \]
where $U_S$ is a neighborhood of $0 \times 0 \times M$ and, moreover, $S$ admits a description throught the explicit formula \cite[eq. (31)]{C22}. We will only need to recall that the key relation between the function $S$ and the local groupoid structure is given at the level of the graph of the multiplication map,
\begin{eqnarray}
T^*M^3 \supset \gr(m)=\{ ((x_1=\partial_{p_1}S|_{(p_1,p_2,x)},p_1),(x_2=\partial_{p_2}S|_{(p_1,p_2,x)},p_2), (x_3=x, p_3=\partial_xS|_{(p_1,p_2,x)} )) \nonumber \\
 : (p_1,p_2,x)\in U_S \}.\label{eq:Sgrm}
 \end{eqnarray}
This relation actually defines uniquely the germ of the local groupoid structure in terms of $S$. For example, the source and target maps are given by $s(x,p) = \partial_{p_2} S(p,0,x)$ and $t(x,p) = \partial_{p_1}S (0,p,x)$. It also follows that
$$ Q(x,p) = \partial_{p_2}S(-p,p,x), \tilde Q(x,p) = \partial_{p_1}S(p,-p,x).$$
Conversely, following \cite{CatDheFel}, given a function $S$ as above, the structure maps it induces define a local symplectic groupoid structure on $(T^*M,\omega_c)$ if $S$ statisfies the SGA equation \eqref{eq:SGA}.
%

\titpar{Enhancements for $G_\pi$}
We now provide formulas for enhancements of the local symplectic groupoid $G_\pi$ and relate them to the $a_0$ factor coming from a star product of the form \eqref{eq:FIOstar} for $(M,\pi)$.

We first describe the canonical enhancement $\sigma^c$ associated with the euclidean half-density $\mu=|dx|^{1/2}$ on $M\simeq \R^n$. We denote $\tilde \mu=|dp|^{1/2}$ the dual enhancement on $M^*$ so that $\lambda_{T^*M}=\mu \otimes \tilde \mu$. Consider the parametrization of composable arrows in $G_\pi$ given by
\begin{equation}\label{eq:defJ}
J:U_J \subset M^*\times M^* \times M \to G_\pi\compo, (p_1,p_2,x) \mapsto (g_1=(\partial_{p_1}S|_{(p_1,p_2,x)},p_1), g_2=((\partial_{p_2}S|_{(p_1,p_2,x)},p_2) ), \ 
\end{equation}
defined on a neighborhood $U_J$ of $p_1=p_2=0$. In this parametrization, the multiplication map yields
\[ m(J(p_1,p_2,x)) = (x,\partial_xS|_{(p_1,p_2,x)})=:g_3, \text{ 
so that $(g_1,g_2,g_3)\in \gr(m)$.}\]
We denote $\tilde x(p_1,p_2,x) =s(g_1)=t(g_2)$ the point where the arrows join and recall the maps $Q(\cdot,p)$ and $\tilde Q(\cdot,p)$ yielding inverses for $s(\cdot,p)$ and $t(\cdot,p)$, respectively, for $p\sim 0$ as introduced above.
%
We can then arrive to the main formula for $\sigma^c$.
\begin{lemma}\label{lem:cansigcoord}
With the notations above,
\begin{eqnarray}
J^*\sigma^c|_{(p_1,p_2,x)} &=&  \gS(p_1,p_2,x) \tilde \mu\otimes \tilde \mu \otimes \mu. \nonumber\\
\text{with}&&\nonumber\\
\gS(p_1,p_2,x)&=&|det(\partial_x \tilde x|_{(p_1,p_2,x)})\cdot det(\partial_x Q|_{(\tx,p_1)}) \cdot det(\partial_x \tilde Q|_{(\tx,p_2)}) |^{1/2}  \label{eq:Jcansigma}
\end{eqnarray}
\end{lemma}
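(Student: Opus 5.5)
We use the defining description of $\sigma^c$ (Definition \ref{def:canonsigma}): $\sigma^c=(\lambda_G\times\lambda_G)/\mu$ through the exact sequence \eqref{eq:shortseq}, with $\lambda_G=\lambda_{T^*M}=\mu\otimes\tilde\mu$. We do not use the splitting formula \eqref{eq:defsigc}. Instead we choose coordinates adapted to the quotient. Parametrize $G_\pi\compo$ by $(p_1,p_2,\tx)$, where $\tx=s(g_1)=t(g_2)$ is the joining point; near the units this is a local diffeomorphism. Since $Q(\cdot,p)$ inverts $s(\cdot,p)$ and $\tilde Q(\cdot,p)$ inverts $t(\cdot,p)$, the arrows are
\[ g_1=(Q(\tx,p_1),p_1),\qquad g_2=(\tilde Q(\tx,p_2),p_2). \]
Denote this map by $K(p_1,p_2,\tx)$.

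\emph{Step 1: $K^*\sigma^c$.} Consider the map $T_{g_1}G\times T_{g_2}G\to T_\tx M$, $(v_1,v_2)\mapsto Ds(v_1)-Dt(v_2)$. Use the coordinates $(\tx_1,p_1)$ on $g_1$ via $x_1=Q(\tx_1,p_1)$, and $(\tx_2,p_2)$ on $g_2$ via $x_2=\tilde Q(\tx_2,p_2)$. In these coordinates the map becomes $\partial_{\tx_1}-\partial_{\tx_2}$, i.e.\ $(\delta\tx_1,\delta p_1,\delta\tx_2,\delta p_2)\mapsto\delta\tx_1-\delta\tx_2$, and its kernel is exactly the image of $DK$. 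In these coordinates we have
\[ \lambda_G|_{g_1}=|\det\partial_\tx Q(\tx,p_1)|^{1/2}\,|d\tx_1|^{1/2}|dp_1|^{1/2}, \]
and similarly for $g_2$ with $\tilde Q$; the $p$-derivatives of $Q$ form a triangular block and do not contribute. Take the complement spanned by $\partial_{\tx_1}$, which maps isomorphically onto $T_\tx M$ with $\mu$-value $1$ on the coordinate basis. The quotient rule below \eqref{eq:sesV} then gives
\[ K^*\sigma^c=|\det\partial_\tx Q(\tx,p_1)\,\det\partial_\tx\tilde Q(\tx,p_2)|^{1/2}\,\tilde\mu\otimes\tilde\mu\otimes|d\tx|^{1/2}. \]
The basis change from $(\delta\tx,\delta p_1,\delta p_2)\oplus\partial_{\tx_1}$ to the product coordinate basis is unimodular, since it is just a diagonal shear.

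\emph{Step 2: change of variables.} We have $J=K\circ\Phi$ with $\Phi(p_1,p_2,x)=(p_1,p_2,\tx(p_1,p_2,x))$. The Jacobian of $\Phi$ is block triangular with diagonal block $\partial_x\tx$, so
\[ \Phi^*(\tilde\mu\otimes\tilde\mu\otimes|d\tx|^{1/2})=|\det\partial_x\tx|^{1/2}\,\tilde\mu\otimes\tilde\mu\otimes\mu. \]
Combining this with Step 1 yields \eqref{eq:Jcansigma}.

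The main obstacle is checking that $K$ really parametrizes $G_\pi\compo$ near the units. This follows because $s(\cdot,p)$ and $t(\cdot,p)$ are local diffeomorphisms for $p\sim0$. One also needs $\Phi$ to be a local diffeomorphism, which holds since $\tx(0,0,x)=x$. The remaining work is careful bookkeeping of the block-triangular determinants in Step 1.
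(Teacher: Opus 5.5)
Your proposal is correct and takes essentially the same route as the paper. Your $K$ is the paper's alternative parametrization $\Phi(p_1,\tilde x,p_2)$ with the arguments reordered. Your complement $\partial_{\tilde x_1}$ is the paper's $\beta''$ written in adapted coordinates. Your Step 2 is the paper's final pullback along $(p_1,p_2,x)\mapsto(p_1,p_2,\tilde x(p_1,p_2,x))$, which contributes the factor $|\det\partial_x\tilde x|^{1/2}$.
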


\begin{proof}
Let us simplify the notation $G=G_\pi$ within this proof and follow Definition \ref{def:canonsigma} of $\sigma^c$.
As a first step, consider the following map which gives an alternative parametrization of composable arrows,
\begin{equation*}\label{eq:defPhi}
\Phi:U_\Phi \subset M^* \times M \times M^* \to G_\pi\compo , \ \Phi(p_1,\tilde x,p_2) = (g_1=(Q(\tilde x,p_1),p_1), g_2=(\tilde Q(\tilde x,p_2),p_2) ).
\end{equation*}
Note that $\Phi(p_1,\tilde x,p_2)=J(p_1,p_2,x)$ where $\tilde x= s(J(p_1,p_2,x))$ defines a bijection $x \mapsto \tx$ for small $p_1,p_2\sim 0$, as noted before. Consider basis $[M],[M^*]$ on which $\mu$ and $\tilde \mu$ take the value $1$, and define
\[ \beta' := D_{(p_1,\tx,p_2)}\Phi ( [M^*] \times [M] \times [M^*] ) \text{ basis for $T_{(g_1,g_2)}G\compo$.}\]
We complement this basis with a linearly independent set $\beta''\subset T_{g_1}G \oplus T_{g_2}G$ given by 
\[ \beta'' = \{ (\dot g_1, \dot g_2): \dot g_1 =(\dot x_1,\dot p_1)=(\partial_x Q|_{(\tx,p_1)}\dot x,0), \dot g_2=0 ; \dot x \in [M]\}. \]
We can then apply Definition \ref{def:canonsigma} and compute
\[ \sigma^c(\beta') = \frac{(\lambda_G\otimes \lambda_G)(\beta'\cup \beta'')}{\mu((Ds_1-Dt_2)(\beta''))} \]
which straightforwardly yields
\begin{equation*}\label{eq:canonicalenhonGpi}
(\Phi^*\sigma^c)|_{(p_1,\tx,p_2)} = |det(\partial_x Q|_{(\tx,p_1)})\cdot det(\partial_x \tilde Q|_{(\tx,p_2)}) |^{1/2} \tilde \mu\otimes \mu \otimes \tilde \mu \in \Gamma |T(M^*\times M\times M^*)|^{1/2}.
\end{equation*}
Finally, to get to the $J$-parametrization of composable arrows, we compute
\[ J^*\sigma^c = (\Phi \Phi^{-1}J)^*\sigma^c = (\Phi^{-1}J)^* (\Phi^*\sigma^c) \]
from which the Lemma follows by observing $\Phi^{-1}J(p_1,p_2,x) = (p_1,\tx,p_2)$ with $\tx$ seen as a function of $(p_1,p_2,x)$.
\end{proof}

We now discuss formulas for an enhancement $\sigma=\sigma^{a_0}$ of $G_\pi$ defined by a factor $a_0$ coming from a star product in the form \eqref{eq:FIOstar}. We recall that $\sigma^{a_0}$ is defined by such an integral operator following the general procedure in \cite[\S 8.5]{GSbook}. Specializing to our particular setting, one obtains
\begin{equation}\label{eq:Jsigma}
(J^*\sigma^{a_0})|_{(p_1,p_2,x)} = a_0(p_1,p_2,x) \ \tmu \otimes \tmu \otimes \mu.
\end{equation}
(The details can be found in \cite{Ledthesis}.)
Combining with Lemma \ref{lem:cansigcoord} and the fact that
$$\gS(0,0,x) = 1, \ \forall x\in M,$$ 
we thus get the following relation
\begin{equation}\label{eq:sigmaa0simple}
\sigma^{a_0}|_{(g_1,g_2)} = \frac{a_0(p_1,p_2,x_3)}{\gS(p_1,p_2,x_3)} \sigma^c|_{(g_1,g_2)},   
\end{equation}
where the factor $\gS$ was defined in \eqref{eq:Jcansigma} and $(g_1,g_2)=J(p_1,p_2,x_3) \in G_\pi\compo$ with $J$ defined in \eqref{eq:defJ}. We also note that we can recover the $a_0$ factor from the half-density $\sigma^{a_0}$ via
\[ a_0(p_1,p_2,x) = \sigma^{a_0}|_{J(p_1,p_2,x)}\left( D_{(p_1,p_2,x)}J([M^*]\times [M^*] \times [M]) \right) \]
for $[M],[M^*]$ dual basis on which $\mu$ and $\tilde\mu$ take the value $1$.

\begin{remark} (Associativity of $\sigma^{a_0}$)
One can verify directly that $\sigma^{a_0}$, as defined above, satisfies the associativity condition for half-densities \eqref{eq:assocsigma} iff $a_0$ satisfies the equation \eqref{eq:Ea0} obtained alternatively from a stationary phase argument. Moreover, underlying an associative $\sigma^{a_0}$ we have
\begin{equation}\label{eq:deffa0S}
 f_{a_0}(p_1,p_2,x) = \frac{a_0(p_1,p_2,x_3)}{\gamma_S(p_1,p_2,x_3)}
\end{equation}
which, seen as a function on $G_\pi\compo$ through $J^{-1}$, must satisfy eq. \eqref{eq:assocf}. (See \cite{Ledthesis} for more details.) When $a_0\neq 0$ near $0\times 0 \times M$, following Corollary \ref{cor:H2}, we call $f_{a_0}$ the \textbf{multiplicative 2-cocycle defined by $a_0$}, $\delta_{\C^*}f_{a_0}=1$.
\end{remark}

\begin{remark} (Convolution of enhanced horizontal bisections) Let us go back to the convolution operation of Remark \ref{rmk:variosconvolbisec}. For any function $F\in C^\infty(M)$ let us denote the corresponding horizontal Lagrangian $L_F=\{(x,\partial_xF|_x):x\in M\}$ in $T^*M$. The projection $q:T^*M\to M$ induces a diffeomorphism $q_F:=q|_{L_F}:L_F\simeq M$ for any $F$. Thinking of $p_1,p_2\in M^*\subset C^{\infty}(M)$ as linear functions, when they are close enough to zero, we get
\[ (\gr(m),\sigma^{a_0})\circ \left((L_{p_1},q^*_{p_1}(f_1\mu))\times (L_{p_2},q^*_{p_2}(f_2\mu))\right) = \left(L_{S(p_1,p_2,\cdot)}, q^*_{S(p_1,p_2,\cdot)}[f_1(\partial_{p_1}S) f_2(\partial_{p_2}S) a_0|_{(p_1,p_2,\cdot)} \mu]\right)  \]
where $\mu=|dx|^{1/2}$ as above and $f_j\in C^\infty(M), \ j=1,2$. Note that this is the $\h\to 0$ stationary phase approximation of $(f_1 e^{\fih p_1}) \star_\h (f_2e^{\fih p_2})|_x$, with $\star_\h$ as in \eqref{eq:FIOstar}. With respect to this operation, the choice of the canonical $\sigma^c$ has the following special property. Denote $s_p = s|_{L_p}:L_p \overset{\sim}{\to} M$ for $p\sim 0$, and similarly $t_p=t|_{L_p}$. Recall the parameterization $J(p_1,p_2,x)=(g_1,g_2)$ and denote $\tilde{x}_{p_1,p_2}: M\to M, \ x \mapsto \tilde x(p_1,p_2,x)=s(g_1)=t(g_2)$ the corresponding meeting point of the arrows. Then, we get the identities
\begin{eqnarray}
(\gr(m),\sigma^c)\circ \left((L_{p_1},s_{p_1}^*(f_1\mu))\times (L_{p_2},t^*_{p_2}(f_2\mu))\right) = \left(L_{S(p_1,p_2,\cdot)}, \tilde x_{p_1,p_2}^*[f_1 f_2 \mu]\right),\nonumber
\\
(\gr(m),\sigma^c)\circ \left((L_{p_1},s_{p_1}^*(f_1\mu))\times (L_{p_2},s^*_{p_2}(f_2\mu))\right) = \left(L_{S(p_1,p_2,\cdot)}, \tilde x_{p_1,p_2}^*(f_1) \cdot s|_{L_{S(p_1,p_2,\cdot)}}^*(f_2 \mu)\right).
\label{eq:convolsigmac}
\end{eqnarray}
(And a similar one for pullbacks along $t_{p_j}$.)
Observe that, when $f_1$ is invariant along the leaves of $(M,\pi)$, the second case above yields the half-density $s|_{L_{S(p_1,p_2,\cdot)}}^*(f_1f_2 \mu)$ on the rhs.
\end{remark}

\subsection{Characterizing Kontsevich's half-density}\label{subsec:Khalf}
Here, we recall Kontsevich's star product $\star^K$ with structure given in \eqref{eq:kontstarfactors} and focus on the underlying semiclassical factors $(S_K,a_0^K)$. We first describe the formal family of symplectic groupoids $G_K$ underlying $S_K$ following \cite{C22}. Second, the main result (Theorem \ref{thm:Kenhancement}) states that the {\bf Kontsevich enhancement} $\sigma^K\equiv \sigma^{a^K_0}$ of $G_K$ defined by the factor $a_0^K$ is equivalent, as a formal family, to the canonical enhancement $\sigma^c$. 
Within this subsection, then, we shall also work with \emph{formal families} of symplectic groupoids integrating the formal family $\epsilon \pi \in \mathfrak{X}^2(M)[[\epsilon]]$ of Poisson structures, with $\epsilon$ a formal parameter which we distinguish from $\h$ for conceptual reasons. A reference for the precise definitions and their relation to asymptotic expansions is \cite[\S 4]{C22}. 


\titpar{The formal family of enhanced symplectic groupoids underlying $\star^K$} 

In this subsection, folowing \cite[\S 4.1 and 4.2]{C22}, we consider 
\begin{equation}\label{eq:defGK} 
(G_K \rra M, \omega_c)\text{: the \emph{formal family of symplectic groupoids} obtained from $G_{\e\pi}$ around $\e=0$}
\end{equation}
which underlies Kontsevich's $\star^K_h$ and the novel object of study:
\begin{equation}\label{eq:defKenhanc}
\text{the \textbf{Kontsevich enhancement} }\sigma^K\equiv \sigma^{a_0^K}\text{ defined by } a_0^K=e^{K_{1-loop}}\text{ through formula \eqref{eq:Jsigma}.}
\end{equation}
We recall that the groupoid structure in $G_K$ is defined by the zero-loop expansion $S_K$ in \eqref{eq:kontstarfactors} while the enhancement corresponds to the $1$-loop expansion $K_{\text{1-loop}}$ (see also \cite{CatDheFel}).

Let us first describe the formal family \eqref{eq:defGK} above. The idea is that every structure map, seen as the operation of pullback of functions, is a formal expansion in $\e$:
\[ s^*,t^*: C^\infty(M) \to C^\infty(T^*M)[[\e]], m^*: C^\infty(T^*M)\to C^\infty(G_K\compo)[[\e]].\]
These are obtained by asymptotic expansion of the structure maps of the construction $G_{\e\pi}$ as $\e\to 0$.
We thus see $G_K$ as a family of symplectic groupoids, parameterized by a formal parameter $\e$, which integrates $(M,\e\pi\in \mathfrak{X}(M)[[\e]])$.
More details about formal families of groupoids can be found in \cite[\S 4]{C22} and \cite{CatDheFel} in terms of expansions, in \cite{Karabegov} for their relation to general star-products, and in \cite{CDh} for the source (realization) map. 
At the level of generating functions, we have
\cite[Thm. 4.13]{C22} which says that the asymptotic Taylor expansion of $S_{\epsilon\pi}$ at $\epsilon=0$ reproduces Kontsevich's $0$-loop factor appearing in \cite{CatDheFel},
\begin{equation}
Taylor_{\epsilon=0} S_{\epsilon \pi} =S_K.
\end{equation}
These facts will be enough for our study below. (See also more details in \cite[\S 4]{C22} about how the asymptotic expansion in $\e$ is equivalent to asymptotic expansions around $p_1,p_2=0$ which appear in \cite{Karabegov}.)

Let us now discuss the Konstsevich enhancement $\sigma^K$ of \eqref{eq:defKenhanc}. It can also be seen as a formal $\e$-family of enhancements of $G_{\e\pi}$ defined through the formula $\sigma^{a_0^K}$ of eq. \eqref{eq:Jsigma}, where $$a_0^K=e^{K_\text{1-loop}}\in C^\infty(M^*\times M^*\times M)[[\e]]$$ 
is a formal expansion obtained as the sum over Kontsevich $1$-loop graphs $K_\text{1-loop}$  with coefficients being the symbols of the corresponding bidifferential operators for the Poisson structure $\e\pi$ (see \cite{Kont,CatDheFel}). The following properties of $\sigma^K$ follow from its definition. Since $\star^K$ is associative, then $a_0^K \in C^\infty(M^*\times M^* \times M)[[\e]]$ defines a formal family of solutions of eq. \eqref{eq:Ea0}. It follows that $\sigma^K$ defines a formal family of associative enhancements of $(G_K\rra M,\omega_c)$, in the sense of Definition \ref{def:main}.
The relation between $\sigma^K$ and the canonical enhancement $\sigma^c$ defined by the coordinate $\mu=|dx|^{1/2}$ is $\sigma^K = f_{a_0^K}\cdot \sigma^c$,
where the formal family of multiplicative $2$-cochains $f_{a_0^K}$ is defined as in \eqref{eq:deffa0S}, 
\[f_{a_0^K}=\frac{a_0^K}{\gS} \in C^\infty(M^*\times M^*\times M)[[\e]],\]
with $\gS$ defined in \eqref{eq:Jcansigma} with underlying $\e$-families of structure maps. 
In particular, $f_{a_0^K}$ is a forma $\e$-family of multiplicative 2-cocycles,
\[ \delta_{\C^*} f_{a_0^K} = 1.\]
(Recall that we use $J$ defined in \eqref{eq:defJ} to identify small composable arrows with elements of $M^*\times M^*\times M$.)
Moreover, by the definition \eqref{eq:Jcansigma} of $\gS$, we have $\gS(0,0,x)=1$ to all orders in $\e$ so that that $\sigma^K$ is of exponential type relative to $\mu=|dx|^{1/2}$ (Definition \ref{def:exptype}),
\[ f_{a_0^K} = e^{h^K}, \ h^K=K_{\text{1-loop}}-ln(\gS) \in C^\infty(M^*\times M^*\times M)[[\e]],\]
where $ln$ is the natural logarithm defined near $1\in\C$. It follows that $\delta h^K=0$ yields  additive 2-cocycles (recall $\delta$ from \eqref{eq:addidelta}) in the formal family $G_K$ and we thus call $h^K$ the {\bf additive Kontsevich semiclassical 2-cocycle}.

\titpar{Characterizing trivial cocycles}
Since we aim at comparing $\sigma^K$ and $\sigma^c$ in terms of exp-equivalence (recall Definition \ref{def:exptype}), we study here a useful characterization of trivial $\e$-families of additive $2$-cocycles.

Let us recall the setting. In the formal family $G_K$ given by \eqref{eq:defGK}, all the structure maps are formal $\e$-expansions. We then have $C^\infty(G_K)=C^\infty(T^*M)[[\e]]$ as $1$-cochains and the following isomorphism for additive $2$-cochains
\[ J^*:C^\infty(G_K\compo)\overset{\sim}{\to} C^\infty(M^*\times M^*\times M)[[\e]] \]
which we omit from the notation and which is defined by $\e$-expansion of the corresponding map \eqref{eq:defJ} for the family $S_{\e\pi}$. The additive cohomology differential $\delta$ of \eqref{eq:addidelta} similarly involves formal $\e$-expansion of the differential for $G_{\e\pi}$.
\begin{proposition}\label{prop:trivcocy}
Let $h\in C^\infty(M^*\times M^*\times M)[[\e]]$ a formal family of additive $2$-cocycles,
\[ \text{ $\delta h =0$  to all orders in $\e$,}\] 
which also satisfies the hypothesis
\begin{equation}\label{eq:symmcondit}
 \frac{\partial^2 h}{\partial {p_1}\partial {p_2}}|_{(0,0,x)} \text{ is a \emph{symmetric} bilinear form on $M^*$ to all orders in $\e$ and $\forall x\in M$.}
\end{equation}
Assume further that $h$ is normalized, namely, $h(0,p,x)=h(p,0,x)=0$ to all orders in $\e$. Then, $h = \delta h'$ for some formal family of 1-cochains $h'\in C^\infty(M^*\times M)[[\e]]$.
\end{proposition}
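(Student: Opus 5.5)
The plan is an induction on the order in $\e$. At each order we reduce to the undeformed groupoid $G_0=(T^*M\rra M)$ of Example \ref{ex:pi=0}, where $m(p_1,p_2)=p_1+p_2$ fiberwise. There the cohomology is controlled by the van Est map, and hypothesis \eqref{eq:symmcondit} says precisely that the relevant van Est class vanishes.

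Expand everything in $\e$: $h=\sum_{k\ge0}\e^k h_k$ and $\delta=\delta_0+\e\delta_1+\e^2\delta_2+\dots$, where $\delta_0$ is the differential \eqref{eq:addidelta} of $G_0$. Since $S_{\e\pi}=p_1x+p_2x+\frac{\e}{2}\pi(x)(p_1,p_2)+O(\e^2)$, at order zero $J$ is simply $(p_1,p_2,x)\mapsto((x,p_1),(x,p_2))$. Set $A(h)(x):=\mathrm{Alt}\bigl(\partial_{p_1}\partial_{p_2}h|_{(0,0,x)}\bigr)$, the antisymmetric part of the mixed Hessian. For a normalized $2$-cocycle this is the van Est image $v([h])\in\Gamma(\Lambda^2TM)$; for $\pi=0$ one has $d_A=0$ on $T^*_0M$, so no quotient is taken. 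The hypothesis is therefore $A(h)=0$ at every order.

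\emph{Step 1 (order zero).} Let $u(p_1,p_2,x)$ be normalized, smooth, with $\delta_0u=0$ and $A(u)=0$. We show $u=\delta_0u'$ with $u'(x,0)=0$ and $\partial_pu'(x,0)=0$. The fibers of $G_0$ are contractible, so the van Est theorem \eqref{eq:vanEst}, applied with $x$ as a smooth parameter, gives $H^2(G_0)\cong\Gamma(\Lambda^2TM)$ with the class detected by $A$. Hence $u=\delta_0u'$ for some $u'$. Alternatively, one can write $u'$ explicitly: the symmetric part of $u$ is $\delta_0$ of a fiberwise Taylor-type primitive, as for abelian extensions of $\R^n$. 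We may subtract from $u'$ any function that is fiberwise linear in $p$ (these are $\delta_0$-closed homomorphisms) and any function of $x$ alone. So we can normalize $u'(x,0)=0$ and $\partial_pu'(x,0)=0$.

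\emph{Step 2 (induction).} Suppose we have found $h'_{<k}$, a polynomial in $\e$ whose coefficients all vanish to second order along the zero section, such that $h^{(k)}:=h-\delta h'_{<k}$ is $O(\e^k)$. We need three facts about $h^{(k)}$.
\begin{itemize}
\item[(i)] It is a cocycle. This is clear.
\item[(ii)] It is normalized. This holds because $\delta h'$ is normalized whenever $h'$ vanishes on units: $\delta h'(1,g)=h'(g)-h'(g)+h'(1)=0$.
\item[(iii)] It satisfies $A(h^{(k)})=0$. This holds because $A(\delta h')$ only involves $\partial_p\partial_p h'(x,0)$, which is symmetric, together with $\partial_ph'(x,0)\cdot\partial_x\partial_{p_1}\partial_{p_2}S|_0$, whose antisymmetric part is $\frac{\e}{2}\partial\pi$. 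The latter term vanishes because $\partial_ph'(x,0)=0$. This is the content of $v(\delta h')=d_{\e\pi}v(h')$ with $v(h')=\partial_ph'|_0=0$.
\end{itemize}
The leading coefficient $u=h^{(k)}_k$ then satisfies $\delta_0u=0$, is normalized, and has $A(u)=0$. Step 1 produces $u'$ with the same vanishing properties, and we set $h'_{<k+1}=h'_{<k}+\e^ku'$. Then $h-\delta h'_{<k+1}=O(\e^{k+1})$, and the induction proceeds. The formal sum $h'=\lim h'_{<k}\in C^\infty(M^*\times M)[[\e]]$ then satisfies $h=\delta h'$ to all orders.

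The main obstacle is exactly point (iii). The symmetry condition \eqref{eq:symmcondit} is not preserved by subtracting arbitrary coboundaries, since $d_{\e\pi}$ of the linear part of $h'$ shifts $A$. The fix is the normalization $\partial_pu'(x,0)=0$ in Step 1, which is possible because linear functions are $\delta_0$-closed. A secondary point is to justify the parametrized van Est statement for $G_0$ (smooth in $x$, with smooth cochains in $p$). If needed, this can be replaced by the direct abelian-group computation, using the standard contracting homotopy on $\R^n$ integrated along rays $tp$.
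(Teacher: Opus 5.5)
Your proposal is correct and follows essentially the same route as the paper. Both arguments run an order-by-order perturbation around $G_0=T^*M$ using the van Est isomorphism at $\epsilon=0$, with the primitive at each order normalized to vanish to second order along the zero section so that the symmetry hypothesis survives subtraction of coboundaries. Your point (iii) is the paper's fact (b), and subtracting $\delta h'_{<k}$ makes the cocycle property of the next coefficient automatic.

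One small slip in Step 1: functions $c(x)$ of $x$ alone are not $\delta_0$-closed, since $\delta_0 c(\alpha_1,\alpha_2)=c(x)$. You never need them, though, because $u'(x,0)=u(0,0,x)=0$ is already forced by the normalization of $u$.
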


\begin{proof}
The key idea is to use a perturbation method together with the van Est isomorphism at $\e=0$, as follows. Consider the initial element $(M,\pi=0)$ in the $\e$-family which is integrated by $(G_0=T^*M\rra M,\omega_c)$ as described in Example \ref{ex:pi=0}. The underlying Lie algebroid is $A_0 = T^*_{\pi=0} M \to M$ with trivial anchor and bracket. The van Est isomorphism (\cite{Cra03}) in this particular case yields 
$$v_0:H^{2}(G_0) \overset{\sim}{\to} H^2(A_0)=\mathfrak{X}^2(M).$$ 
On the other hand, in this case, at the level of cochains $h_0\in C^\infty(T^*M\times_M T^*M)$ we have the following description of the van Est map,
\[ v_0(h_0) =  skew( \frac{\partial^2 h_0}{\partial {p_1}\partial {p_2}}|_{(0,0,x)} )\]
where $\partial_{p_j}$ denote the vertical derivatives at the zero section and $skew$ means the skew-symmetrization of the bilinear form on the fibers of $T^*M$. We thus have the following fact: for $h_0 \in C^\infty(T^*M\times_M T^*M)$,
\[(a): \text{ if $\delta_0h_0=0$ and $\frac{\partial^2 h_0}{\partial {p_1}\partial {p_2}}|_{(0,0,x)}$ is symmetric $\forall x$ then $\exists h_0'\in C^\infty(T^*M)$ such that $h=\delta_0 h_0'$,} \]
where $\delta_0$ is the additive differential on $G_0$, i.e. formula \eqref{eq:addidelta} with composition being $+$ on the fibers of $T^*M$. Moreover, when $h_0$ is normalized, then $h'_0$ can be chosen so that it is at least quadratic in the $p$-variables:
\[ \text{$h'_0(x,0) = 0$ (normalized) and  $\partial_p h'_0|_{(x,0)} = 0$.} \]
This fact holds by considering the Taylor expansion of such $h'(x,p)$ around $p=0$: the $p$-constant term is ruled out by the normalization hypothesis $f(0,0,x)=0$ while a possible linear term in $f'$ can be removed since such terms are in the kernel of $\delta_0$.

Let us now come back to a general family $h=\sum_{n\geq 0} h_n \e^n$ as in the hypothesis of this Proposition. We write $\delta = \sum_{n\geq 0} \e^n D_n$ (with $D_0=\delta_0$ as above) taking into account the $\e$-expansion of the structure maps of the underlying $G_{\e\pi}$. The condition \eqref{eq:symmcondit} reads $v_0(h_n)=0$ for each $n$ while the cocycle condition $\delta h=0$ reads
\[ D_0 h_n = - \sum_i D_{n-i} h_i.\]
On the other hand, a necessary condition for $h=\delta h'$ with $h'=\sum_n h'_n \e^n$ is
\[ D_0 h'_n = h_n - \sum_{i\leq n-1} D_{n-i}h'_i =:\tilde h_n \]
which provides a recursive formula for the $h'_n$. Using the fact (a) above, it is enough to recursively check that the $\tilde h_n$ can be chosen so that
\[ (i): \ D_0\tilde h_n =0, \ (ii): \ v_0(\tilde h_n)=0.\]
The recursion starts with $h_0$ which satisfies (i,ii) as a direct consequence of the hypothesis on $h$. Assume (i,ii) hold for $h_i, i<n$. The first condition $D_0 \tilde h_n=0$ follows by direct computation from the identities coming from expanding $\delta^2=0$ together with $\delta h=0$ (see also \cite{Ledthesis}); this verifies (i). For (ii), we use (a) above, with the mentioned refinement for normalized cochains, and notice that the proof shall be finished once we show the following general fact:
\[\text{(b): if $f'(x,p)$ is at least quadratic in $p$, then $v_0(D_kf')=0, \ \forall k\geq 0$.}\]
To show this, we can go back to the non-formal family with $\e\geq 0$ with generating function $S_{\e}\equiv S_{\e\pi}$ and compute
\[\delta f'|_{(p_1,p_2,x)} = f'(\partial_{p_1}S_\e(p_1,p_2,x),p_1) + f'(\partial_{p_1}S_\e(p_1,p_2,x),p_1) - f'(x, \partial_x S_\e(p_1,p_2,x)). \]
We need to show that $b^{ij}(x):= \partial_{t_1=0}\partial_{t_2=0} [D_\e f'|_{(t_1 e^i,t_2 e^j,x)}]$
is symmetric in $i,j$, where $(e^i)$ is any linear basis of $M^*$. After $\partial_{t_1=0}\partial_{t_2=0}$, the first two terms in  $\delta f'|_{(p_1,p_2,x)}$ vanish due to the quadratic behaviour of $f'$ on the $p$ variable. Similarly, the last term under $\partial_{t_1=0}\partial_{t_2=0}$ will only have contributions from the linear terms in $p_1$ and $p_2$ of $\partial_x S_\e(p_1,p_2,x)$. In turn, these terms are $p_1 + p_2$ by \cite[eq. 23]{C22}, so that $b^{ij}(x)$ is indeed symmetric. This finishes the proof of $(b)$ and, hence, of the Proposition.
\end{proof}

In the following, we apply this characterization to analyze $\sigma^K$.

\titpar{The equivalence between $\sigma^K$ and $\sigma^c$}

We want to study how far is Kontsevich's enhancement $\sigma^K$ from the canonical one $\sigma^c$. Since the canonical enhancement satisfies the associativity properties structurally, without any other input, such a comparison can "explain" the associativity properties behind the 1-loop factor of Kontsevich's formula $\star^K_h$. 

To apply Proposition \ref{prop:trivcocy} above to the additive Kontsevich semiclassical cocycle $h=h^K$, we need to show that it satisfies condition \eqref{eq:symmcondit}.

\begin{lemma}\label{lem:symm}
Both $\e$-families of additive 2-cochains $K_{1-loop}$ and $ln(\gS)$ are normalized and satisfy the symmetry condition \eqref{eq:symmcondit}.
\end{lemma}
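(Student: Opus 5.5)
The plan is to obtain both properties from a single reflection symmetry $(p_1,p_2,x)\mapsto(-p_2,-p_1,x)$. For normalization I use the unit properties, separately for each family. For the symmetry condition \eqref{eq:symmcondit} I use the following elementary observation. Suppose a formal family $F\in C^\infty(M^*\times M^*\times M)[[\e]]$ satisfies
\[ F(p_1,p_2,x)=F(-p_2,-p_1,x). \]
Differentiate once in $p_1$ and once in $p_2$ and evaluate at $p_1=p_2=0$. This gives $\partial_{p_1^i}\partial_{p_2^j}F|_{(0,0,x)}=\partial_{p_1^j}\partial_{p_2^i}F|_{(0,0,x)}$, because the two sign changes cancel. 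So \eqref{eq:symmcondit} holds for $F$. It therefore suffices to prove this reflection invariance for $F=K_{\text{1-loop}}$ and for $F=ln(\gS)$, order by order in $\e$.

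\emph{Kontsevich factor.} For normalization I use that $1$ is a unit for $\star^K_\h$, since every nontrivial Kontsevich graph differentiates both arguments. Taking $p_1=0$ in \eqref{eq:kontstarfactors} gives $1\star^K_\h e^{\fih p_2}=e^{\fih p_2}$. Hence $S_K(0,p_2,x)=p_2x$ and all $a^K_j(0,p_2,x)$ are trivial. In particular $K_{\text{1-loop}}(0,p_2,x)=0$, and similarly $K_{\text{1-loop}}(p_1,0,x)=0$.

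For the reflection invariance I use the known parity property of Kontsevich's product in coordinates, $f\star^K_{-\h}g=g\star^K_{\h}f$. This comes from the reflection $z\mapsto-\bar z$ of the upper half plane: it swaps the two ground points and changes the weights by $(-1)^{|E_\Gamma|}$. Apply it with $f=e^{\fih p_1}$, $g=e^{\fih p_2}$, rewriting $e^{\fih p}=e^{\frac{i}{-\h}(-p)}$. Comparing the two sides of \eqref{eq:kontstarfactors} order by order in $\h$ gives two identities: $S_K(p_1,p_2,x)=-S_K(-p_2,-p_1,x)$ at order $\h^{-1}$, and $a^K_0(p_1,p_2,x)=a^K_0(-p_2,-p_1,x)$ at order $\h^0$. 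The latter is the required invariance of $K_{\text{1-loop}}$. This parity identity is the step I expect to need the most care; I would cite it from \cite{Kont,CatDheFel} rather than re-derive it, checking only that the weight sign is consistent with the edge count for 1-loop graphs.

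\emph{Canonical factor $\gS$.} For normalization, set $p_1=0$. Then $g_1=1_{\tx}$, so $g_2=g_3=(x,p_2)$, which gives $\tx=t(x,p_2)$ and $Q(\tx,0)=\tx$. Since $\tilde Q(t(x,p_2),p_2)=x$, we get $det(\partial_x\tx)\cdot det(\partial_x\tilde Q|_{(\tx,p_2)})=1$, and hence $\gS(0,p_2,x)=1$. The case $p_2=0$ is symmetric: there $g_2=1_{\tx}$ and $Q(s(x,p_1),p_1)=x$ gives the cancellation. Therefore $ln(\gS)$ is normalized to all orders in $\e$.

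For reflection invariance, I use that inversion $(x,p)\mapsto(x,-p)$ is an anti-automorphism of $G_{\e\pi}$, so $inv(g_1g_2)=inv(g_2)\,inv(g_1)$. Combined with \eqref{eq:Sgrm}, this yields $S_{\e\pi}(p_1,p_2,x)=-S_{\e\pi}(-p_2,-p_1,x)$; the same identity also follows from the explicit formula in \cite{C22} or from the expansion identity for $S_K$ above. Consequently $J(-p_2,-p_1,x)=(inv\,g_2,inv\,g_1)$, and the meeting point $\tx$ is unchanged. Moreover $\tilde Q(\cdot,p)=Q(\cdot,-p)$, so the two factors $det(\partial_x Q|_{(\tx,p_1)})$ and $det(\partial_x\tilde Q|_{(\tx,p_2)})$ in \eqref{eq:Jcansigma} are exchanged under the reflection. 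Hence $\gS(p_1,p_2,x)=\gS(-p_2,-p_1,x)$, and the observation above finishes the proof. Everything passes to the formal $\e$-expansions because the identities hold for each small $\e$.
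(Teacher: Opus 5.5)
Your proof is correct. The part on $\ln\gS$ is essentially the paper's argument: normalization comes from $Q(\cdot,0)=\mathrm{id}$ together with $\tilde Q(t(x,p_2),p_2)=x$ (resp.\ $Q(s(x,p_1),p_1)=x$). Symmetry comes from the invariance $\gS(p_1,p_2,x)=\gS(-p_2,-p_1,x)$, which follows from $m(\mathrm{inv}\,g_2,\mathrm{inv}\,g_1)=\mathrm{inv}(g_1g_2)$ because that identity swaps the $Q$ and $\tilde Q$ factors and fixes $\tx$.

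The genuine difference is in how you treat $K_{\text{1-loop}}$. The paper reads both properties off the graph expansion $K_{\text{1-loop}}=\e^2 b_x(p_1,p_2)+E_\e$, where $b_x$ is the bilinear form of the simplest 1-loop graph (two aerial vertices) and $E_\e$ is at least cubic in the $p_j$. You instead get normalization from the unit property $1\star^K_\h f=f$, and reflection invariance $a^K_0(p_1,p_2,x)=a^K_0(-p_2,-p_1,x)$ from the parity identity $f\star^K_{-\h}g=g\star^K_\h f$. You then handle both families with the same elementary observation about $F(p_1,p_2,x)=F(-p_2,-p_1,x)$.

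Your route is uniform and graph-free. It also makes explicit two facts the paper's short argument leaves implicit:
\begin{itemize}
\item graph weights vanish when a ground vertex receives no edge, which is needed for $E_\e(0,p_2)=0$;
\item $b_x$ is symmetric, which holds because its coefficient tensor $\partial_j\pi^{ik}\partial_i\pi^{jl}$ is symmetric in $k,l$.
\end{itemize}
It also yields the stronger fact that $h^K$ itself is invariant under $(p_1,p_2)\mapsto(-p_2,-p_1)$. The paper's route, in exchange, exhibits the leading $\e^2$ term and the homogeneity structure used in the subsequent remark.

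Two small corrections. First, comparing coefficients in $\h$ uses uniqueness of the factorization $(\sum_j\h^j a_j)e^{\fih S}$ with $\h$-independent $S,a_j$. This is true, since a nonzero lowest-order term of $S-S'$ would produce an unmatched $\h^{-1}$ term, but you should say so. Second, your heuristic for the parity identity misplaces the sign: $(-1)^{|E_\Gamma|}=(-1)^{2n}=1$ would make $\star^K_\h$ commutative. Under $z\mapsto-\bar z$ each of the $2n$ angle forms changes sign, and these signs cancel. The factor $(-1)^n$ that turns $\h$ into $-\h$ comes instead from the orientation reversal on the $n$ copies of the upper half-plane. Since you cite the identity rather than derive it, this does not affect the proof.
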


\begin{proof}
Following the description of $K_{1-loop}$ in terms of Kontsevich diagrams, see \cite{Kont} and \cite{CatDheFel}, we have 
\[ K_{1-loop}(p_1,p_2,x) = \e^2 b_x(p_1,p_2) + E_\e(p_1,p_2)  \]
where $b_x$ is a bilinear form on $M^*$ (defined by the simplest K-graph with 1-loop which has 2 aereal vertices) and where $E_\e(p_1,p_2)$ are terms at least cubic in the $p_j$'s and in $\e$. From this it follows that $K_{1-loop}$ is normalized and satisfies \eqref{eq:symmcondit}.

Next, let us verify that $ln(\gS)$ is normalized. To this end, let us first consider $\gS(0,p_2,x)$. Using $Q(\tx,0)=\tx$ and $\tilde Q(\tx(0,p_2,x),p_2)=x$ we obtain
\[ \gS(0,p_2,x) = |\underset{1}{\underbrace{det(\partial_x Q|_{(\tx,0)})}} \cdot \underset{1}{\underbrace{det(\partial_x \tilde x|_{(0,p_2,x)})\cdot  det(\partial_x \tilde Q|_{(\tx,p_2)})}} |^{1/2} = 1. \]
Similarly, one obtains $\gS(p_1,0,x)=1$, and then $ln(\gS)$ is normalized.
Finally, we need to check that $ln(\gS)$ satisfies \eqref{eq:symmcondit}. Using the normalization identities just proven, we have
\[ b^S_x:=\frac{\partial^2 (ln(\gS))}{\partial p_1 \partial p_2}|_{(0,0,x)} = \frac{1}{\gS(0,0,x)}  \frac{\partial^2 \gS }{\partial p_1 \partial p_2}|_{(0,0,x)} = \frac{\partial^2 \gS }{\partial p_1 \partial p_2}|_{(0,0,x)}. \]
On the other hand, the desired symmetry of the bilinear form $b^S_x$ follows directly from the following property with respect to inversion $inv(x,p)=(x,-p)$, 
\[ (a): \gS (p_1,p_2,x) = \gS(-p_2,-p_1,x). \]
It thus remains to show $(a)$, for which we recall the general identity
\[ m(g_1,g_2)=g_3 \iff m(inv(g_2),inv(g_1)) = inv(g_3), \]
implying
\[Q(\tx,-p_2) = \tilde Q(\tx,p_2), \ \tilde Q(\tx,-p_1)=Q(\tx, p_1), \ \tx(p_1,p_2,x)=\tx(-p_2,-p_1,x). \]
Identity $(a)$ follows directly by the above and the definition \eqref{eq:Jcansigma} of $\gS$, thus finishing the proof.
\end{proof}

We are now ready to state the main theorem of this Section.
\begin{theorem}\label{thm:Kenhancement}
Let $(M\simeq \R^n,\pi)$ be a coordinate Poisson manifold and $(G_K\rra M,\omega_c)$ the formal family of symplectic groupoids given in \eqref{eq:defGK}. Consider $\sigma^K$ the Kontsevich enhancement defined by the 1-loop diagrams factor $a^K_0 = e^{K_{1-loop}}$ in \eqref{eq:defKenhanc}. 
Then, $\sigma^K$ is equivalent, in the sense of Definition \ref{def:equivenhanc}, to the canonical enhancement $\sigma^c$ associated with the coordinate half-density $\mu=|dx|^{1/2}$. More specifically, there exists  $h'\in C^\infty(M^*\times M)[[\e]]$ such that
\[ \sigma^K = e^{\delta h'} \sigma^c. \]
\end{theorem}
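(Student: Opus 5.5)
The plan is to reduce the statement to Proposition \ref{prop:trivcocy} applied to the additive Kontsevich semiclassical 2-cocycle $h^K = K_{1-loop} - ln(\gS)$. We already know from the discussion preceding Proposition \ref{prop:trivcocy} that $\sigma^K = f_{a_0^K}\,\sigma^c$ with $f_{a_0^K} = e^{h^K}$. This uses \eqref{eq:sigmaa0simple} together with $a_0^K = e^{K_{1-loop}}$ and $\gS(0,0,x)=1$, so that the logarithm is well defined as a formal $\e$-family. Moreover, since $\sigma^K$ is associative (because $\star^K$ is associative, so $a_0^K$ solves \eqref{eq:Ea0}) and $\sigma^c$ is associative by Theorem \ref{thm:main}, Corollary \ref{cor:f} and Remark \ref{rmk:cocycl} give $\delta h^K = 0$ to all orders in $\e$, where $\delta$ is the additive differential \eqref{eq:addidelta} of the formal family $G_K$, read through the $J$-parametrization \eqref{eq:defJ}.

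Next, we check the remaining hypotheses of Proposition \ref{prop:trivcocy}. By Lemma \ref{lem:symm}, both $K_{1-loop}$ and $ln(\gS)$ are normalized and satisfy the symmetry condition \eqref{eq:symmcondit}. Both properties are linear in the cochain: normalization is the vanishing of $h(0,p,x)$ and $h(p,0,x)$, and symmetry of $\partial^2_{p_1p_2}h|_{(0,0,x)}$ is preserved under differences. Hence $h^K$ is normalized and satisfies \eqref{eq:symmcondit}. Proposition \ref{prop:trivcocy} then produces $h' \in C^\infty(M^*\times M)[[\e]]$ with $h^K = \delta h'$ to all orders in $\e$.

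Finally, we set $\factor := e^{-h'}$, a formal family of nonvanishing functions on $G_K$, identified with $T^*M$ via coordinates $(p,x)$. With this choice,
\[ e^{\delta h'}|_{(g_1,g_2)} = \frac{e^{h'(g_2)}\,e^{h'(g_1)}}{e^{h'(g_1g_2)}} = \frac{\factor(g_1g_2)}{\factor(g_1)\factor(g_2)}. \]
The sign conventions of $\delta$ in \eqref{eq:addidelta} should be double-checked here; if needed we take $\factor = e^{h'}$ instead. Since Definition \ref{def:equivenhanc} allows any nonvanishing $\factor$, this exhibits $\sigma^K = e^{\delta h'}\sigma^c$ as equivalent to $\sigma^c$ in the sense of that definition, and indeed exp-equivalent in the sense of Definition \ref{def:exptype}.

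I expect the substantive difficulty to lie entirely in the earlier results, namely the perturbative van Est argument of Proposition \ref{prop:trivcocy} and the symmetry computation of Lemma \ref{lem:symm}. Within this proof, the only step needing care is the claim $\delta h^K = 0$ as a formal family. This requires that the passage from \eqref{eq:Ea0} for $a_0^K$ to \eqref{eq:assocf} for $f_{a_0^K}$ holds order by order in $\e$. That in turn relies on the asymptotic expansion $Taylor_{\e=0} S_{\e\pi} = S_K$, so that the same formal groupoid $G_K$ underlies both $\sigma^K$ and $\sigma^c$, and on taking logarithms being legitimate because $f_{a_0^K} = 1 + O(p)$ near the units.
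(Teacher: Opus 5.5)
Your proposal is correct and follows the paper's own argument: it applies Lemma \ref{lem:symm} and Proposition \ref{prop:trivcocy} to the formal cocycle $h^K=K_{1-loop}-ln(\gamma_S)$ and then uses $\sigma^K=e^{h^K}\sigma^c=e^{\delta h'}\sigma^c$. On your sign hedge: since $\delta h'(g_1,g_2)=h'(g_1)+h'(g_2)-h'(g_1g_2)$, Definition \ref{def:equivenhanc} with $\sigma'=\sigma^K$ and $\sigma=\sigma^c$ is realized by $\kappa=e^{h'}$, as the paper states; your $\kappa=e^{-h'}$ also works, because the relation is symmetric, but it realizes the definition with the roles of $\sigma^K$ and $\sigma^c$ swapped.
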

The proof follows directly by using Lemma \ref{lem:symm} and Proposition \ref{prop:trivcocy} applied to the formal 2-cocycle $h^K=K_{1-loop}-ln(\gS)$, since $\sigma^K=e^{K_{1-loop}-ln(\gS)} \sigma^c$.
Note that, in the notation of Definition \ref{def:equivenhanc}, the factor implementing the equivalence is $\factor = e^{h'}$ so that $\sigma^K$ and $\sigma^c$ are also exp-equivalent in the sense of Definition \ref{def:exptype} along the formal family $G_K$. The deformation class of Remark \ref{rmk:defo} is trivial since $h=\delta h'$ is exact in this case and this is coherent with the fact that the underlying Kontsevich-class family of Poisson structures is simply $\h\pi$ (see \cite{Kont}).

\begin{remark} (Further study of $h'$)
First, we remark that $h^K=\sum_{n\geq 2} h_n \e^n$ has the special property that  $h_n(x,p_1,p_2)$ is a homogeneous polynomial of degree $n$ in $(p_1,p_2)$. This follows from the description of the underlying $1$-loop Kontsevich diagrams and the homogeneity properties of $S_{\e\pi}$ in \cite[Cor. 3.33]{C22}. In such a case, following the proof of Proposition \ref{prop:trivcocy}, one can verify that $h'=\sum_n h'_n \e^n$ can be chosen so that $h'_n(x,p)$ is also homogeneous of degree $n$ in $p$. About the leading $\e^2$-terms, we can compare that of $K_{1-loop}$ (which corresponds to the simplest Kontsevich diagram with 1-loop) to that of $ln(\gS)$. These can be seen as symmetric bilinear forms on $M^*$ where $\pi$ enters quadratically and the case $M=\g^*$ below suggests the conjecture that they are  always equal. In such case, the possible corrections in $K_{1-loop}$ to $ln(\gS)$ must be of higher order than $\e^2$. This will be explored elsewhere.
\end{remark}

\begin{remark} (Formal path integral computation)
Let us recall from \cite{CatFel} that $f_1\star^K_\h f_2|_x$ can be obtained as a path integral $I_\h$ in the Poisson Sigma Model (PSM) and that the expansion \eqref{eq:kontstarfactors} corresponds to the asymptotic expansion of $I_\h$. This integral behaves like an (infinite dimensional) oscillatory integral with oscillatory phase given by the PSM action $\mathcal{A}$ (including source terms, \cite[\S 5.1]{C22}). Following a formal stationary phase approximation, $\mathcal{A}$ evaluated on its critical points was already shown in \cite{C22} to yield the canonical generating function $S_\pi$ above. It then makes sense to continue this formal computation and try to obtain a functional formula for the factor $a_0^K$. It should correspond to the square-root of the Hessian factor in the underlying stationary phase formula and we can compare it to our canonical factor $\gS$. Note that, for $M=\g^*$ below, these two coincide. This functional study will be carried out elsewhere.
\end{remark}

\subsection{The case of a linear Poisson structure}\label{subsec:linear}
Let $\g$ be a Lie algebra and $(M=\g^*,\pi)$ the associated linear Poisson manifold as in Example \ref{ex:pilinear}.
In this final subsection, we illustrate the structural understanding of the corresponding Kontsevich enhancement $\sigma^K$ in this case. Here, the semiclassical data $(S_K,\sigma^K)$ determines $\star^K$ completely and this particular star product plays an important role in understanding the Duflo isomorphism (see \cite[\S 8.3]{Kont} and \cite{Amar}) and its extension to convolution algebras of invariant distributions proposed by Kashiwara-Vergne (\cite{KV,AnSaTo}). The main result (Proposition \ref{prop:gastsigmaK}) says that $\sigma^K=\sigma^c$, thus providing an interpretation for the key square-root Jacobian factors appearing in $a_0^K$ within our general theory of associative half-densities. 

\titpar{A class $\star^F$ of star products and the corresponding data $(S,\sigma^F)$}

Following \cite{Amar}, we consider a family of star products $\star^F_\h$ on $M=\g^*$ parameterized by a function $F:\g \to \C$ with $F(0)=1$, as follows:
\begin{equation}\label{eq:starFgast}
f_1\star^F_\h f_2 (x) = (2\pi \h)^{-n}\int_{p_1,p_2 \in \g} \F(f_1)(p_1) \F(f_2)(p_2) \ \frac{F(p_1)F(p_2)}{F(p_1\cdot p_2)} \ e^{\frac{i}{\h} x(p_1\cdot p_2)} \ dp_1 dp_2.
\end{equation}
In the formula, $f_1,f_2 \in C^\infty(M)$ are functions, $n=dim(\g)$, $x\in M=\g^*$ and $M^*=\g$, 
$$\F(f)(p) = (2\pi \h)^{-n/2} \int_{x\in M} f(x) e^{-\frac{i}{\h} x p } \ dx$$ 
is the $\h$-scaled Fourier transform and $dx,\ dp$ denote the Lebesgue measures. Moreover, $\g \ni p_1,p_2 \mapsto p_1\cdot p_2 = p_1 + p_2 + \frac{1}{2} [p_1,p_2] + \dots$ is the BCH-series for the Lie algebra $\g$ which induces a local group structure on a neighborhood $\G \subset \g$ of zero.
We can restrict to considering $f_j$ being Schwartz functions so that their Fourier transform $\F(f_j), j=1,2$ has rapid decay at infinity. If we only care about the asymptotic expansion as $\h\to 0$, following \cite{Amar} we can further require $\F(f_j)$ to have compact support near $p=0$.

For any such $F$, $\star^F$ satisfies the axioms (S1,2,3) of the introduction, as follows. The change of variables $p_j'=\h p_j, \ j=1,2$ makes (S1,2) more evident and leaves the precise expressions as appearing in \cite{Amar}. The associativity (S3) follows from the associativity of the BCH product on $\G\subset \g$ appearing in 
\[ S(p_1,p_2,x) := x(p_1\cdot p_2) \]
and from the factor 
\[a_0^F(p_1,p_2,x) := \frac{F(p_1)F(p_2)}{F(p_1\cdot p_2)} \]
being $x$-independent and trivially a multiplicative $2$-cocycle on the local group $\G\ni p_1,p_2$. At the semiclassical level, we have the underlying local symplectic groupoid $(G_\pi = T^*\G\rra \g^*, \omega_c)$ given by the cotangent lift of $\G$, as recalled in Example \ref{ex:pilinear} (see also \cite[Ex. 2.7]{C22}). The underlying {\bf semiclassical data} $(S,a_0^F)$ consists of the function $S$ satisfying the SGA-equation \eqref{eq:SGA} and $a_0^F$ satisfying the equation \eqref{eq:Ea0} (since $\partial^2_x S=0$ in this case). We also remark that $S=S_\pi$ coincides with the canonical generating function for $G_\pi=T^*\G$ (see \cite[Ex. 3.12]{C22}), and that $(S,a_0^F)$ completely determines $\star^F$. 

Following the general theory, 
$a_0^F$ determines an {\bf underlying associative enhancement} $\sigma^F$ of $G_\pi$ via eq. \eqref{eq:Jsigma}. Finally, using the fact that the projection $T^*\G\to \G$ is a morphism of local groupoids, we can write the identity of $2$-cochains
\[ a_0^F = \delta_{\C^*}F, \]
where $F$ is seen as a $1$-cochain on $\G\subset \g$ and the pullback is omitted.

\titpar{Kontsevich's enhancement equals the canonical one}
Next, we recall from \cite{Amar} the fact that the family $\star^F$ contains 3 important star products quantizing $M=\g^*$ as particular cases. Consider the functions
\begin{equation}\label{eq:Fsvarios}
 F_G(p) = 1, \ F_R(p) = det\left(\frac{sinh(\frac{1}{2}ad_p)}{\frac{1}{2}ad_p} \right), \ F_K(p) = det\left(\frac{sinh(\frac{1}{2}ad_p)}{\frac{1}{2}ad_p} \right)^{1/2} = F_R(p)^{1/2},
 \end{equation}
where  $ad_p:\g\to \g$ is the adjoint action of $p\in \g$ and $sinh(z)=(e^z-e^{-z})/2$ is the hyperbolic sine function. Then, the corresponding star products
\[ \star_h^{F_G}, \ \star_h^{F_R}, \ \star_h^{F_K} \]
reproduce the so-called \textbf{Gutt star product}, \textbf{Rieffel star product} and \textbf{Kontsevich star product}, respectively. The fact that the asymptotic expansion of the above integral formula reproduces Kontsevich's general formula $\star^K_h$ when specialized to $M=\g^*$,
\[ \star_h^{F_K}=\star^K_\h\]
 is non-trivial (see \cite{Sho} and more details in \cite{Amar}).

Now, all the star products in the class $\star^F$ are equivalent to each other (see the definition of equivalence in \cite{Kont}). In other terms, all the enhancements $\sigma^F$ of $T^*\G$ are equivalent in the sense of Definition \ref{def:equivenhanc} via $\kappa = F'/F$. Nevertheless, among this class, Kontsevich's $F=F_K$ has a non-trivial distinctive property: $f_1\star^K_\h f_2=f_1f_2$ when $f_1,f_2$ are $ad^*$-invariant polynomials on $M=\g^*$ (see \cite{Amar} and \cite{AnSaTo} for its extension to suitable convolution algebras).

The following main result of this subsection provides an interpretation, within our theory of associative half-densities, for the formula of the underlying factor $a_0^K$, namely, that $\sigma^K$ is no other than the canonical half-density on $T^*\G$.

\begin{proposition}\label{prop:gastsigmaK}
Let $F=F_K$ be the function corresponding to Kontsevich star product $\star^{F_K}_h=\star^K_h$, as given by formula \eqref{eq:Fsvarios}. Consider $\sigma^K\equiv \sigma^{a_0^K}$ the corresponding enhancement of $(G_\pi\rra M,\omega_c)$ defined by $a_0^K=a_0^{F_K}$ via \eqref{eq:Jsigma}. Then,
\[ \sigma^{K} = \sigma^c\text{ the canonical enhancement associated with $\mu=|dx|^{1/2}$.} \]
\end{proposition}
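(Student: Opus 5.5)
By \eqref{eq:sigmaa0simple}, $\sigma^K=(a_0^{F_K}/\gS)\,\sigma^c$. The claim is therefore equivalent to the pointwise identity
\[ \gS(p_1,p_2,x)=\frac{F_K(p_1)F_K(p_2)}{F_K(p_1\cdot p_2)} \]
for $(p_1,p_2)$ near $0$ in $\g\times\g$ and all $x\in\g^*$. I will compute $\gS$ directly from its definition \eqref{eq:Jcansigma}, using $S(p_1,p_2,x)=x(p_1\cdot p_2)$.

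The first step is to make the structure maps of $G_\pi=T^*\G$ explicit in these coordinates. By \eqref{eq:Sgrm}, $t(x,p)=\partial_{p_1}S(0,p,x)$ and $s(x,p)=\partial_{p_2}S(p,0,x)$. Write $d^L_p,d^R_p:\g\to\g$ for the differentials at $p$ of $q\mapsto q\cdot p$ and $q\mapsto p\cdot q$ respectively. Then $s(x,p)=(d^R_p)^*x$ and $t(x,p)=(d^L_p)^*x$, so $s(\cdot,p)$ and $t(\cdot,p)$ are linear in $x$. Consequently the inverse maps $Q(\cdot,p)$ and $\tilde Q(\cdot,p)$ are linear, with $\partial_xQ=((d^R_p)^*)^{-1}$ and $\partial_x\tilde Q=((d^L_p)^*)^{-1}$, both independent of $\tx$.

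The meeting point is $\tx=s(g_1)$ with $x_1=\partial_{p_1}S=(d^L_{p_1}\text{ of }q\mapsto q\cdot p_2)^*x$. To get $\partial_x\tx$, I will use the cleaner route through the target: $\tx=t(g_2)$, with $g_2=(x_2,p_2)$ and $x_2=(D_{p_2}(p_1\cdot{-}))^*x$. This gives $\partial_x\tx=(d^L_{p_2})^*\,(D_{p_2}(p_1\cdot{-}))^*$. The chain rule applied to $q\mapsto p_1\cdot q$ then yields $D_{p_2}(p_1\cdot{-})\,d^L_{p_2}=d^L_{p_1\cdot p_2}\circ(\text{derivative of }p_1\cdot{-}\text{ at }0)=d^L_{p_1\cdot p_2}(d^L_{p_1})^{-1}$, by associativity and left/right-invariance identities of the local group. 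Taking determinants, the three factors combine to
\[ \gS^2=\left|\frac{\det d^L_{p_1\cdot p_2}}{\det d^L_{p_1}\,\det d^L_{p_2}}\right|\cdot(\text{possible }d^L/d^R\text{ mismatch}). \]
I will track the $L/R$ bookkeeping carefully, using $\det d^R_p=\det d^L_{-p}=\det d^L_p\cdot\det e^{\pm ad_p}$.

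The key input is the classical formula for the differential of $\exp$ in BCH coordinates:
\[ d^{L/R}_p=\left(\frac{1-e^{\mp ad_p}}{\pm ad_p}\right)^{-1},\qquad \det\frac{1-e^{-ad_p}}{ad_p}=e^{-\mathrm{tr}(ad_p)/2}F_R(p). \]
Substituting, $\gS=|F_R(p_1)F_R(p_2)/F_R(p_1\cdot p_2)|^{1/2}$ multiplied by a factor $\exp$ of a linear combination of $\mathrm{tr}\,ad_{p_1}$, $\mathrm{tr}\,ad_{p_2}$ and $\mathrm{tr}\,ad_{p_1\cdot p_2}$. Since $\mathrm{tr}\circ ad$ is a Lie algebra character, $\mathrm{tr}\,ad_{p_1\cdot p_2}=\mathrm{tr}\,ad_{p_1}+\mathrm{tr}\,ad_{p_2}$, and the modular factors cancel. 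This leaves exactly $F_K(p_1)F_K(p_2)/F_K(p_1\cdot p_2)$, with absolute values harmless because $F_K>0$ near $0$.

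The main obstacle is the second step: keeping the left/right conventions straight in $\partial_x\tx$ so that the $e^{\mathrm{tr}\,ad}$ factors really combine into a coboundary of the character $\mathrm{tr}\circ ad$. As a sanity check I will verify the normalization $\gS(0,p,x)=1$ from the proof of Lemma \ref{lem:symm}, and compare the $\e^2$ terms in the nilpotent and abelian cases.
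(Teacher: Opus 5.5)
Your plan is essentially the paper's proof: reduce to $\gS=a_0^{F_K}$, use linearity of $s(\cdot,p),t(\cdot,p)$ in $x$ to write $\gS^2$ as the multiplicative coboundary of an inverse translation Jacobian in BCH coordinates, insert the $d\exp$ formula, and kill the $e^{\pm\frac12\mathrm{tr}\,ad}$ factor because $\mathrm{tr}\circ ad$ vanishes on brackets. That last step is your self-contained substitute for \cite[Lemma 3.1]{Amar}; the paper runs the same computation with left translations and the identity $D_{p_1}R_{p_2}\circ D_0L_{p_1}=D_0L_{p_1\cdot p_2}\circ Ad_{-p_2}$.

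The one slip to fix is your chain-rule identity. It should read $D_{p_2}L_{p_1}\circ d^L_{p_2}=d^L_{p_1\cdot p_2}\circ Ad_{p_1}$, with $\det Ad_{p_1}=\det d^R_{p_1}/\det d^L_{p_1}$. The $\det d^R_{p_1}$ then cancels against $\det\partial_xQ$, giving exactly your displayed $\gS^2$ with no mismatch factor. Your $d\exp$ formula also has $L$ and $R$ swapped relative to your own definitions, but this is harmless, since it only flips the sign in the exponent of a character.
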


\begin{proof}
Unwinding the definitions, we need to show that $ a_0^{F_K} = \gS$, where the factor $\gS$ was defined in \eqref{eq:Jcansigma}. 
%
For the local Lie group $\G\subset \g$ defined by the BCH structure, we denote $\theta_L,\theta_R \in \Omega^1(\G,\g)$ the left and right invariant Maurer-Cartan forms. 
Following e.g. \cite[Sec. 1.5]{DK}, we have the following classical formulas
\[\theta_L|_p:T_p \G = \g \to \g, \ v\mapsto \left(\frac{1-e^{-ad_p}}{ad_p} \right)(v) = e^{-\frac{1}{2}ad_p}\left(\frac{sinh(\frac{1}{2}ad_p)}{\frac{ad_p}{2}} \right) (v) ,\]
and $\theta_R|_p = Ad_p(\theta_L|_p)$.
Note that 
$$\tilde F(p) := det(\theta_L|_p) =  det \left(\frac{1-e^{-ad_p}}{ad_p} \right) = det(e^{-\frac{1}{2}ad_p}) F_R(p)$$ 
is the Jacobian factor appearing in the left invariant Haar measure on $\G$ (or in another integration $\tilde \G$ via exponential coordinates) and with $F_R(p)$ defined in \eqref{eq:Fsvarios}.

We are now ready to compute $\gS$ explicitly using formula \eqref{eq:Jcansigma}. We will consider $p,p_1,p_2\sim 0$ small enough.
Following the definition of the maps $Q,\tilde Q$ and $\tx$, 
%
we can then evaluate $\gS$ yielding
%
\[ \gS(p_1,p_2,x) = | det(D_{p_1}R_{p_2}\circ D_0L_{p_1}) \ \tilde F(p_1) \ det(Ad_{p_2}) \tilde F(p_2)|^{1/2}.\]
For the first factor, it is easy to check form the identity on any Lie group $g_1e^{t\xi}g_2=g_1g_2g_2^{-1}e^{t\xi}g_2$ that
\[ D_{p_1}R_{p_2}\circ D_0L_{p_1} = D_0L_{p_1\cdot p_2} \circ Ad_{-p_2}. \]
Then, 
\[ det(D_{p_1}R_{p_2}\circ D_0L_{p_1}) = det(D_0L_{p_1\cdot p_2})det(Ad_{-p_2}) = \frac{det(Ad_{-p_2})}{det(\theta_L|_{p_1\cdot p_2})}, \]
so that, after cancellation of $Ad_{p_2}$ and $Ad_{-p_2}$ factors using $p^{-1}=-p$ in the BCH structure,
\[\gS(p_1,p_2,x) = \left|\frac{\tilde F(p_1) \tilde F(p_2)}{\tilde F(p_1\cdot p_2)}\right|^{1/2}. \]
Finally, using \cite[Lemma 3.1]{Amar} (see also \cite[proof of Prop. 3.2]{Amar}) we know that the $det(e^{-\frac{1}{2}ad_p})$ factor between $\tilde F(p)$ and $F_R(p)$=$F_K(p)^2$, as defined in \eqref{eq:Fsvarios}, cancels in the above combination, namely,
\[ \frac{\tilde F(p_1) \tilde F(p_2)}{\tilde F(p_1\cdot p_2)} =\frac{F_R(p_1) F_R(p_2)}{F_R(p_1\cdot p_2)} \]
so that
\[ \gS(p_1,p_2,x) = \left|\frac{F_R(p_1) F_R(p_2)}{F_R(p_1\cdot p_2)}\right|^{1/2}=\frac{F_K(p_1) F_K(p_2)}{F_K(p_1\cdot p_2)}=a_0^{F_K}(p_1,p_2,x),\]
for $p_1,p_2\sim 0$ small enough, as wanted. This finishes the proof.
\end{proof}

In particular, the above result provides a purely semiclassical interpretation for the factor $F_K$ appearing in the Duflo isomorphism as a correction to the PBW map (see \cite{Kont,Amar}). Namely, the factor $F_K$ can be interpreted as the equivalence factor $\factor=F_K/1$, in the sense of Definition \ref{def:equivenhanc}, between the enhancement  $\sigma^{F_G}$ underlying the Gutt star product (which is induced by PBW) and the canonical enhancement $\sigma^c=\sigma^K$.

%
%
%
%

\appendix 
\section{Half-densities with values in a line bundle}\label{app:E}

Following the constructions in semiclassical analysis \cite{GSbook,Mein}, half-densities for Lagrangians $\Lambda \subset T^*X$ in cotangent bundles are in general not $\C$-valued but take values in the underlying Maslov line bundle $\LL(\Lambda)$ over $\Lambda$. When considering more general symplectic ambients in place of $T^*X$, such as $\Lambda=\gr(m)\subset \overline{G}\times \overline{G} \times G$ for a symplectic groupoid $G$, one can consider half-densities taking values in more general line bundles $E\to \Lambda$ which are considered part of the defining data. In this Appendix, we explore this setting and isolate the key characteristic properties from the case $E=\LL(\gr(m))$ appearing when $G\simeq T^*M$ is a local symplectic groupoid. We also show some general properties of the resulting $E$-valued enhancements and observe that, for considerations that only involve the germ around the units of $G$, one can restrict to the $\C$-valued case.

%
%
%
%

\titpar{The case of the Maslov line for local symplectic groupoids}\label{maslovmult}

We first consider $(G\rra M,\omega)$ a {\bf local symplectic groupoid} in a given germ class around its units, following the conventions of \cite{CabMarSal1} as in \S \ref{subsec:coordstar}. In such a germ class, we can always take a representative with ambient $G=T^*M$, with the units to be given by the zero section $1_x=0_x, \ x\in M$ and with $\omega=\omega_c$ the canonical symplectic form. We denote $G^{(2)} \subset G\times G$ the open subset of composable arrows which are also small enough to be multiplied in the local groupoid (this domain is part of the defining data). Similarly $G^{(3)}$ is an open subset inside composable triples which are small enough so that associativity $g_1(g_2g_3)=(g_1g_2)g_3$ holds. The nerve $G^{(k)}$ can be constructed similarly recalling the requirement $1^{(k)}_M \subset G^{(k)}, \ k\geq 2$ where $1^{(k)}_x=(1_x,\dots,1_x)\in G^k$.

We now recall the following facts about the {\bf Maslov line bundle} $\LL(\Gamma)\to \Gamma$ associated with a canonical relation $\Gamma: T^*X_1 \dto T^*X_2$, following \cite{GSbook} (see also \cite{Mein}). This is a specialization of a Maslov line bundle $\LL(\Lambda)\to \Lambda$ associated with any Lagrangian submanifold on a cotangent bundle $\Lambda \subset T^*X$ which is defined by the relative position of the two Lagrangians, $T_z \Lambda$ and the cotangent fiber, at each point $z\in T^*X$ inside the symplectic vector space $T_z(T^*X)$. We shall not need to recall the full definition but only an important functoriality property, as follows (see \cite[\S 5.13.5]{GSbook}). Given two canonical relations $\Gamma_j:T^*X_j \dto T^*X_{j+1}, \ j=1,2$ which are cleanly composable, then
\begin{equation}\label{eq:functLLGamma}
\alpha^*\LL(\Gamma_2\circ \Gamma_1) \simeq pr_1^*\LL(\Gamma_1) \otimes pr_2^*\LL(\Gamma_2)
\end{equation}
where $\alpha:\Gamma_2\star \Gamma_1:=\{(z_1,z_2,z_3): (z_1,z_2)\in \Gamma_1, \ (z_2,z_3)\in \Gamma_2\} \to \Gamma_2\circ \Gamma_1$ is given by $\alpha(z_1,z_2,z_3)=(z_1,z_3)$, as in Section \ref{sec:half}, and the projections $pr_j:\Gamma_2\star \Gamma_1 \to \Gamma_j$ are given by $pr_j(z_1,z_2,z_3)=(z_j,z_{j+1})$ for $j=1,2$.

\medskip

Combining the above two facts, for a local symplectic groupoid with $G=T^*M$ and $\omega=\omega_c$, as above,  there is a natural line bundle
\begin{equation}\label{eq:defEm} 
E_m := i_{(2)}^* \LL(\gr(m)) \text{ over } G^{(2)}
\end{equation}
where $\gr(m): G\times G \dto G$ is seen as a canonical relation $T^*M \times T^*M = T^*(M\times M) \dto T^*M$ and $i_\comp{2}:G^\comp{2}\overset{\sim}{\to} \gr(m), \ (g_2,g_1)\mapsto (g_2,g_1,g_2g_1)$. 
%
%
Moving towards an associativity property for $E$, let us introduce the following open embeddings:
\[ i_{3(21)}: G^\comp{3} \to \gr(m)\circ (\gr(id_G)\times \gr(m)), (g_3,g_2,g_1) \mapsto (g_3,g_2,g_1,g_3(g_2g_1)), \]
\[ i_{(32)1}: G^\comp{3} \to \gr(m)\circ (\gr(m) \times \gr(id_G)), (g_3,g_2,g_1) \mapsto (g_3,g_2,g_1,(g_3g_2)g_1). \]
We also recall the simplicial face maps $d_i:G^{(n+1)}\to G^{(n)},\ i=0,..,n+1$ which remove the $i$-th vertex $x_i$ in a sequence of (small) composable arrows
\begin{equation}\label{eq:Gstring}
 x_{n+1} \overset{g_{n+1}}{\leftarrow} x_{n} \overset{g_{n}}{\leftarrow} \dots  x_1 \overset{g_1}{\leftarrow} x_0 
 \end{equation}
and either composes the corresponding arrows or erases the ones on the extremes. 

\begin{lemma}
With the notations above,
\[ i^*_{3(21)} \LL(\gr(m)\circ (\gr(id_G)\times \gr(m))) \simeq d_3^*E_m \otimes d_1^*E_m, \]
\[ i^*_{(32)1} \LL(\gr(m)\circ (\gr(m)\times \gr(id_G))) \simeq d_2^*E_m \otimes d_0^*E_m, \]
as line bundles over $G^\comp{3}$.
\end{lemma}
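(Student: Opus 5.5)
The plan is to apply the functoriality property \eqref{eq:functLLGamma} of the Maslov line bundle to each of the two compositions, and then identify the resulting factors with pullbacks of $E_m$ under face maps. I treat the first isomorphism; the second is symmetric, with the roles of the first and second factors exchanged.

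\emph{Setting up the composition.} Write $\Gamma_1=\gr(id_G)\times\gr(m):G^3\dto G^2$ and $\Gamma_2=\gr(m):G^2\dto G$. Since $\Gamma_1$ is the graph of the map $id\times m$ on $G\times G^{(2)}$, the composition is transverse (graphs always compose transversely, as recalled in \S\ref{subsec:def}). The map $\alpha:\Gamma_2\star\Gamma_1\to\Gamma_2\circ\Gamma_1$ is injective, with image containing $i_{3(21)}(G^{(3)})$. So I can lift $i_{3(21)}$ to
\[ \tilde\imath:G^{(3)}\to\Gamma_2\star\Gamma_1,\qquad (g_3,g_2,g_1)\mapsto\big((g_3,g_2,g_1),(g_3,g_2g_1),g_3(g_2g_1)\big), \]
with $\alpha\circ\tilde\imath=i_{3(21)}$. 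Pulling \eqref{eq:functLLGamma} back by $\tilde\imath$ gives
\[ i_{3(21)}^*\LL(\Gamma_2\circ\Gamma_1)\simeq (pr_1\tilde\imath)^*\LL(\Gamma_1)\otimes(pr_2\tilde\imath)^*\LL(\Gamma_2). \]

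\emph{Identifying the factors.} The second factor is straightforward. Since $pr_2\tilde\imath=i_{(2)}\circ d$, where $d(g_3,g_2,g_1)=(g_3,g_2g_1)$ is the face map composing the two lower arrows (labelled $d_1$ in the statement's convention), this factor is $d_1^*E_m$.

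For the first factor I need a product property of the Maslov bundle: for $\Lambda_1\subset T^*X_1$ and $\Lambda_2\subset T^*X_2$,
\[ \LL(\Lambda_1\times\Lambda_2)\simeq\LL(\Lambda_1)\boxtimes\LL(\Lambda_2), \]
and $\LL(\gr(id_G))$ should be canonically trivial, since $\gr(id)$ is the identity canonical relation, i.e.\ the conormal bundle of the diagonal, whose Maslov bundle is trivial. I will justify both from the definition in \cite{GSbook}: the Maslov bundle is built from the relative position of $T_z\Lambda$ and the vertical fiber, and both data split as direct sums under products. Once these hold, $(pr_1\tilde\imath)^*\LL(\Gamma_1)$ becomes the pullback of $E_m$ along $(g_3,g_2,g_1)\mapsto(g_2,g_1)$, which is the face map $d_3$ dropping the top arrow. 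This yields $d_3^*E_m\otimes d_1^*E_m$.

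\emph{Main obstacle.} The hard step is exactly this product property together with the triviality of $\LL(\gr(id_G))$. I expect no real difficulty there, but it requires some care with the sign conventions ($\overline{G}$ versus $G$ factors) and with the identification $T^*M\times T^*M=T^*(M\times M)$. The remaining concern is bookkeeping the face-map indices against the string convention \eqref{eq:Gstring}, so that each pullback is labelled by the correct $d_i$. For the second isomorphism the same argument with $\Gamma_1=\gr(m)\times\gr(id_G)$ gives the factors $d_0^*E_m$, from $(g_3,g_2)$, and $d_2^*E_m$, from $(g_3g_2,g_1)$.
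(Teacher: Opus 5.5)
Your proposal is correct and follows the paper's route. The paper only says the lemma follows from the functoriality property \eqref{eq:functLLGamma} and leaves the details to the reader. Your details are exactly the omitted ones, and they check out: lifting $i_{3(21)}$ through $\alpha$, the product property of $\LL$, the triviality of $\LL(\gr(id_G))$ as the conormal bundle of the diagonal, and the face-map bookkeeping ($d_3,d_1$, respectively $d_0,d_2$) against the string convention \eqref{eq:Gstring}.
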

%
This lemma follows directly from the functoriality property mentioned above and the details are left to the reader.

To provide a simplicial interpretation of this property, given a line bundle $E$ over $G^{(n)}$, we denote
\[  \dE (E) = d_0^*E \otimes d_1^*E^* \otimes d_2^*E \otimes d_3^*E^* \otimes ...   \text{, the resulting line bundle over $G^{(n+1)}$.}\]
Notice that this is the multiplicative version of the additive differential $\delta = \sum_{i=0}^{n+1} (-1)^i d_i^*$ where the minus is replaced by taking the dual bundle $E^*$.
In this way, we can state the following associativity property for $E$.
\begin{proposition}
For a local symplectic groupoid $(G\rra M,\omega)$ with $G=T^*M$, $\omega=\omega_c$ and $1_x=0_x$, the Maslov line $E_m$ over $\gr(m)$ given in \eqref{eq:defEm} satisfies 
$$\delta_l (E_m) \simeq \underline{\C}\text{ the trivial line over }G^\comp{3}.$$
Moreover, $E_m|_{1\compo_M}\simeq \C$.
\end{proposition}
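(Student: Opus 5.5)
The argument takes the preceding Lemma as its main input and then uses that $\gr(m)$ is associative, so that the two composite canonical relations coincide. By associativity of $m$ on $G^{(3)}$, the two compositions
\[ \gr(m)\circ(\gr(id_G)\times\gr(m)) \quad\text{and}\quad \gr(m)\circ(\gr(m)\times\gr(id_G)) \]
agree as Lagrangian submanifolds of $\overline{G}^3\times G$, at least on the open piece parametrized by small composable triples. Moreover the embeddings $i_{3(21)}$ and $i_{(32)1}$ coincide, since $g_3(g_2g_1)=(g_3g_2)g_1$. The two Maslov line bundles being pulled back are therefore literally the same bundle $\LL(\Lambda_3)$, where $\Lambda_3$ is the common image, pulled back by the same map. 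The Lemma then gives a chain of isomorphisms over $G^{(3)}$:
\[ d_3^*E_m\otimes d_1^*E_m\simeq i^*\LL(\Lambda_3)\simeq d_2^*E_m\otimes d_0^*E_m. \]
Tensoring with $d_1^*E_m^*\otimes d_3^*E_m^*$ and using $E\otimes E^*\simeq\uC$ for line bundles yields
\[ \dE(E_m)=d_0^*E_m\otimes d_1^*E_m^*\otimes d_2^*E_m\otimes d_3^*E_m^*\simeq\uC, \]
which is the first claim. I should check that the face-map indexing in the Lemma matches the convention in \eqref{eq:Gstring}. The identifications are as follows: $d_0$ and $d_3$ delete an extreme arrow, giving $(g_2,g_1)$ and $(g_3,g_2)$, while $d_1$ and $d_2$ compose an interior pair. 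Up to ordering conventions, which the alternating signs absorb, this is consistent.

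For the second claim I would use that $1\compo_M$ sits inside $\gr(m)$ as the points $(0_x,0_x,0_x)$. The Maslov line at a point $z$ of a Lagrangian $\Lambda\subset T^*X$ depends only on the pair $(T_z\Lambda,\,\text{vertical fiber})$, and is canonically trivial whenever these two subspaces are transversal. At a zero-section point, however, $\gr(m)$ is not transversal to the fiber, since $T\gr(m)$ contains the tangent directions of $1^{(3)}_M$ along the zero section. So I would use instead a generating-function description. Near $1\compo_M$, $\gr(m)$ is generated by the phase $\phi=-x_1p_1-x_2p_2+S(p_1,p_2,x_3)$ with fiber variables $(p_1,p_2)$, and in \cite{GSbook} a nondegenerate phase function trivializes $\LL$ on the region it parametrizes. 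This gives a trivialization of $E_m$ on a whole neighborhood of $1\compo_M$, in particular $E_m|_{1\compo_M}\simeq\C$.

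The hardest step is making sure the trivialization of $\dE(E_m)$ is canonical, or at least well defined globally on $G^{(3)}$, rather than merely pointwise. The functoriality isomorphism \eqref{eq:functLLGamma} is natural for clean compositions, and $\gr(id_G)\times\gr(m)$ composes transversally with $\gr(m)$ because these are graphs of maps. I would therefore invoke naturality of \eqref{eq:functLLGamma} for both bracketings, together with the equality of the composed relations, to get a global bundle isomorphism.
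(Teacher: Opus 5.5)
Your argument for $\delta_l(E_m)\simeq\underline{\mathbb{C}}$ is the paper's. The paper only says it follows directly from the Lemma, and you make explicit the implicit step that associativity on $G^{(3)}$ makes the two composite relations, and the maps $i_{3(21)}$, $i_{(32)1}$, coincide. One small slip: with \eqref{eq:Gstring}, $d_0$ erases $g_1$ and gives $(g_3,g_2)$, while $d_3$ gives $(g_2,g_1)$. Your labels are swapped, but harmlessly, since the condition $d_0^*E\otimes d_2^*E\simeq d_1^*E\otimes d_3^*E$ is invariant under $d_i\leftrightarrow d_{3-i}$.

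For $E_m|_{1^{(2)}_M}\simeq\mathbb{C}$ your route differs from the paper's.
\begin{itemize}
\item \emph{The two routes.} The paper asserts that the relative position of $T_{1^{(2)}_x}\mathrm{gr}(m)$ and the cotangent fiber $V$ does not change with $x$. You instead use that the nondegenerate phase $\phi=-x_1p_1-x_2p_2+S(p_1,p_2,x_3)$ parametrizes $\mathrm{gr}(m)$ on a whole neighborhood of the units, so it gives a trivializing section of the Maslov bundle there.
\item \emph{What yours buys.} You get triviality near the units, not just along them, and you avoid the constancy claim. That claim is actually delicate. For $G_\pi$ of \S\ref{subsec:coordstar}, \eqref{eq:Sgrm} with $S=x(p_1+p_2)+\tfrac12\pi(x)(p_1,p_2)+O(p^3)$ gives $T_{1^{(2)}_x}\mathrm{gr}(m)\cap V\cong\ker\pi(x)\oplus\ker\pi(x)$. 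So this relative position jumps with the rank of $\pi$.
\item \emph{A wrong side remark.} The same computation shows your stated reason for non-transversality is incorrect. Directions tangent to the zero section are horizontal and never meet $V$. In fact $\mathrm{gr}(m)$ is transversal to the fiber at $1^{(2)}_x$ exactly when $\pi(x)$ is nondegenerate. Nothing in your argument depends on this remark.
\item \emph{A missing gluing step.} Your $\phi$ is the coordinate phase of \S\ref{subsec:coordstar}. Over $M\simeq\mathbb{R}^n$, the restriction to the contractible $1^{(2)}_M$ is trivial for topological reasons alone. For a general $M$, as the appendix allows, you must glue chart-wise trivializations. This works because the normalization $S(p,0,x)=S(0,p,x)=xp$ forces the fiber Hessian $\partial^2_{(p_1,p_2)}S$ at $p_1=p_2=0$ to be block off-diagonal, hence of signature $0$, in every chart. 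The locally constant H\"ormander transition factors between the chart-wise trivializations are then $1$ near the units, so the trivializations glue.
\end{itemize}
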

The first part follows directly from the above Lemma. The second follows from the fact that $1_x=0_x$ and $m\circ 1\compo_x = 1_x$, so that the relative position of $T_{1\compo_x}\gr(m)$ and the cotangent fiber inside $T_{1\compo_x}(T^*M^3)$ does not change with $x\in M$, allowing for a trivialization of the restricted Maslov bundle.

\titpar{Associative line bundles over composable arrows}
Let us now consider a general (global) Lie groupoid $G\rra M$. 
The above result motivates considering more general line bundles which satisfy the following definitions.

A line bundle $E\to G^{(2)}$ is called {\bf associative} if it satisfies the multiplicative 2-cocycle condition \begin{equation}\label{eq:deltaE=1}
\dE(E) \simeq \uC\text{ the trivial line bundle over $G^{(3)}$.}
\end{equation}
Note that this means that $d_0^*E \otimes d_2^*E \simeq d_1^*E \otimes d_3^*E$, as before.
Similar notions appeared for the germ around units of the pair groupoid $G=M\times M$, for example, in \cite{Melrose} as \emph{local line bundles} and related to star products on symplectic $M$. The $\dE(E)$ operation also appears in connection to bundle gerbes \cite{Murray} for $G$ being a submersion groupoid.
From the simplicial point of view, $\dE$ defines a multiplicative line bundle-valued version of differential cochains. In this context, we say that a line bundle $E$ over $G^{(n+1)}$ is {\bf normalized} if
\[ s_j^* E \simeq \C, \forall j \]
where the degeneracy maps $s_j:G^{(n)}\to G^{(n+1)}, \ j=0,..,n$ insert an identity connecting $x_j$ and $x_{j+1}$ in the string \eqref{eq:Gstring}. We highlight the following property.
\begin{lemma}
Let $E$ over $G\compo$ be an associative line bundle. Then, $E$ is normalized iff $E|_{1\compo_M}\simeq \uC$ is trivial along the units.
\end{lemma}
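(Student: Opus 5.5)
The forward direction is immediate: the degeneracy $s_0:G^{(1)}=G\to G\compo$, $g\mapsto (g,1_{s(g)})$ (or $s_1$, inserting the unit on the other side) restricted to $M\subset G$ via $x\mapsto 1_x$ gives $s_j\circ 1 = 1\compo$. So if $s_j^*E\simeq\uC$ over $G$, restricting along the units gives $E|_{1\compo_M}\simeq \uC$.

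For the converse, I will use the associativity isomorphism $d_0^*E\otimes d_2^*E\simeq d_1^*E\otimes d_3^*E$ over $G^{(3)}$ and pull it back along degeneracies $G\compo\to G^{(3)}$, mimicking the scalar argument in the proof of Lemma \ref{lem:sigmanonvanish}, where $f(1_x,1_x)f(1_x,g)=f(1_x,g)^2$ was used. Concretely, consider the map $\iota:G\to G^{(3)}$, $g\mapsto (1_{t(g)},1_{t(g)},g)$ (with ordering matching the string \eqref{eq:Gstring}). Computing the faces: two of the $d_i\circ\iota$ equal $s\circ g\mapsto(1_{t(g)},g)$, i.e. the relevant degeneracy $s_j:G\to G\compo$, one equals $1\compo\circ t$, and the remaining one again equals the same degeneracy (composing $1\cdot 1=1$ or $1\cdot g=g$). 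Pulling back the cocycle isomorphism along $\iota$ should therefore give an isomorphism of the form
\[ t^*(E|_{1\compo_M})\otimes s_j^*E \simeq s_j^*E\otimes s_j^*E \]
over $G$. Tensoring with $(s_j^*E)^*$ yields $s_j^*E\simeq t^*(E|_{1\compo_M})$, which is trivial by hypothesis. The symmetric choice $g\mapsto(g,1_{s(g)},1_{s(g)})$ handles the other degeneracy with $s^*$ in place of $t^*$.

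The main step to check carefully is the face bookkeeping: that exactly the pattern $\{A,B\}=\{C,D\}$ with three of the four faces equal to $s_j^*E$ and one equal to $(1\compo)^*E$ (pulled back by $s$ or $t$) occurs, and on which side of the isomorphism each lies; if instead the identity reads $s_j^*E\otimes s_j^*E\simeq s_j^*E\otimes t^*E|_{1\compo}$ the conclusion is the same. I expect no further obstacle, since line bundles are invertible under $\otimes$, so cancellation is legitimate (unlike the scalar case, no connectedness argument is needed). For the local-groupoid setting one just notes that $\iota$ lands in $G^{(3)}$ for $g$ near the units, which is where the statement is intended.
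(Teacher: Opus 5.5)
Your proof is correct and follows the route the paper indicates. Pulling back the cocycle isomorphism $d_0^*E\otimes d_2^*E\simeq d_1^*E\otimes d_3^*E$ along the degenerate triples $(1_{t(g)},1_{t(g)},g)$ and $(g,1_{s(g)},1_{s(g)})$ is the line-bundle analogue of the identity $f(1_x,1_x)f(1_x,g)=f(1_x,g)^2$ used in Lemma \ref{lem:sigmanonvanish}. Your face bookkeeping checks out: for the first triple, $d_0\circ\iota=1\compo\circ t$ and the other three faces equal $g\mapsto(1_{t(g)},g)$, which gives $s_j^*E\simeq t^*(E|_{1\compo_M})$; the second triple is symmetric. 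You are also right that invertibility of line bundles under $\otimes$ makes the cancellation immediate, so no $s$-connectedness is needed here, unlike in the scalar case.
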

The proof can be done straightforwardly with similar arguments to those used for Lemma \ref{lem:sigmanonvanish}.
We remark that $E$ over $G\compo$ being associative and normalized does not imply that it is a trivial bundle $E\nsimeq \uC$ in general.

\titpar{The associativity condition for $E$-valued half-densities}
Let $(G\rightrightarrows M,\omega)$ be a symplectic groupoid and we come back to the study of enhancements of $\gr(m)$. Consider a line bundle $E$ over $G\compo\simeq \gr(m)$. An {\bf $E$-valued enhancement} of $(G\rra M,\omega)$ is an $E$-valued half-density along the graph of the multiplication map
\begin{equation}\label{eq:sigmaE}
\sigma \in \Gamma\left(|T\gr(m)|^{1/2}\otimes E\right). 
\end{equation}
Using $\gr(m)\simeq G\compo$, we shall identify $\sigma$ with a section of $|TG\compo|^{1/2}\otimes E$. When $E$ is associative, the associativity equation \eqref{eq:assocsigma} for such a $\sigma$ is well defined since $d_0^*E \otimes d_2^*E \simeq d_1^*E \otimes d_3^*E$ and, in this case, we say that $\sigma$ is an {\bf associative $E$-valued enhancement}.

To analyze existence, consider the sequence \eqref{eq:shortseq} and denote $v_1:G\compo \to M$ the assignment of the object $x=s(g_1)=t(g_2)$ where a composable $(g_1,g_2)$ meets. Similarly to the construction of canonical half-densities in Section \ref{sec:half}, a nowhere-vanishing section 
\[ \mu \in \Gamma\left( |v_1^*TM|^{1/2}\otimes E^* \right), \ \mu\neq 0 \]
can be checked to give rise to an associative enhancement through the formula $\sigma^c = (\lambda_G\otimes \lambda_G)/\mu$, where now $\C$-multiplication is also used for the $\C$-line values.
Nevertheless, such a $\sigma^c$ will be nowhere-vanishing and this is impossible when $E\nsimeq \uC$  is non-trivial (since the bundle of half-densities itself is always trivializable). On the other hand, when $E|_{1\compo_M}\simeq \uC$ (equiv. when $E$ is normalized, by the preceding results), if we are only interested in the germ of $G$ around the units, we can assume $E\simeq \uC$ is trivial.

From this discussion, we see that $E$-valued enhancements can encode non-trivial information which is global in $G$ (far away from units) and that it can lead to non-trivial twists in the geometry. In the context of quantization, such global aspects can be relevant for considerations that are not microlocal (as $\h\to 0$), for example, for associativity of long words in the sense of \cite{CFer2} or when considering quantization of compact symplectic manifolds (e.g. with complex polarizations). These aspects will be explored elsewhere.

\end{document}